\pdfoutput=1
\documentclass[11pt,a4paper]{article}
\usepackage[margin=2.4cm]{geometry}
\usepackage{amsmath,amssymb,amsthm}
\usepackage{booktabs}
\usepackage{graphicx}
\usepackage{tikz}
\usetikzlibrary{arrows.meta,shapes.misc}
\usepackage[hidelinks]{hyperref}

\newtheorem{theorem}{Theorem}[section]
\newtheorem{lemma}[theorem]{Lemma}
\newtheorem{proposition}[theorem]{Proposition}
\newtheorem{corollary}[theorem]{Corollary}
\theoremstyle{definition}
\newtheorem{definition}[theorem]{Definition}

\newtheorem{example}[theorem]{Example}

\newcommand{\Aut}{\operatorname{Aut}}
\newcommand{\cP}{\mathcal{P}}
\newcommand{\rise}[2]{#1^{\overline{#2}}}

\title{Exact and asymptotic enumeration of unrestricted binary phylogenetic networks through automorphism weights}
\author{Josep Batle\thanks{E-mail: batlequantum@gmail.com}\\[4pt]
\small CRISP -- Centre de Recerca Independent de sa Pobla, 07420 sa Pobla, Balearic Islands, Spain}
\date{}

\begin{document}
\maketitle

\begin{abstract}
Let $\cP_{\ell,k}$ be the set of rooted binary phylogenetic networks with $\ell$ labelled leaves, $k$ reticulations and no parallel edges. We write $|\cP_{\ell,k}|=W_k(\ell)+D_k(\ell)$, where the weighted count $W_k(\ell)$ adds the inverse orders of the leaf-fixing automorphism groups and the defect $D_k(\ell)$ collects the remainder. The weighted count satisfies, for every $k$, a recursion over the source layers of the tree-component structure that involves neither a list of component graphs nor any distinction between symmetric and asymmetric configurations, and its exponential generating function is a Laurent polynomial in $\sqrt{1-2x}$. Automorphism groups of networks are $2$-groups, elementary abelian for $k\le5$ but not in general. For $k\le5$ the defect is the weighted count of networks with a distinguished involution, which obeys an extension of the same recursion. An exact symbolic evaluation of the two recursions yields $|\cP_{\ell,k}|$ in closed form for $k\le5$, the case $k=5$ being new; it reproduces the published counts for $k\le4$ and corrects a coefficient in a published generating function for $k=3$. For every $k$, uniformly over explicit ranges of $k$, we prove that the non-tree-child networks are a fraction $2k(k-1)/\ell$ of the tree-child networks to leading order, which gives the third term of the asymptotic expansion of $|\cP_{\ell,k}|$. We also prove that reticulation-visible networks exceed tree-child networks by the fraction $k(k-1)/\ell$, and that a uniformly random network in $\cP_{\ell,k}$ has a nontrivial automorphism with probability $k(k-1)/(4\ell^3)$ to leading order.
\end{abstract}

\noindent\textbf{Keywords:} phylogenetic networks; exact and asymptotic enumeration; automorphisms; generating functions; tree-child networks.

\noindent\textbf{MSC 2020:} 05A15, 05A16, 05C30, 92D15.

\section{Introduction}\label{sec:intro}

A rooted binary phylogenetic network is a directed acyclic graph that records an evolutionary history in which, besides speciation, two lineages may merge in a reticulation event such as hybridisation or lateral gene transfer. Counting such networks is a natural problem of enumerative combinatorics and a prerequisite for understanding the size of the search spaces explored by reconstruction methods. When the number $k$ of reticulations is not restricted, the number of networks with $\ell$ labelled leaves is infinite. It is finite for fixed $k$, and the problem is to determine the number $|\cP_{\ell,k}|$ of networks with $\ell$ leaves and $k$ reticulations as a function of $\ell$.

For tree-child networks, in which every non-leaf vertex has a child that is not a reticulation, the problem is solved. Pons and the present author conjectured an exact formula in terms of a class of constrained words \cite{PonsBatle21}, which was proved by Lin, Liu, Liu, Liu and Xin \cite{Lin26}. For unrestricted networks the exact counts are known only for few reticulations. Zhang obtained $|\cP_{\ell,1}|$ \cite{Zhang19}. Mansouri obtained the exponential generating functions of vertex-labelled networks for $k\le3$ and the leaf-labelled count for $k=2$ \cite{Mansouri22}. Yu, Fuchs, Yu and Zhang obtained the leaf-labelled count for $k=3$ \cite{Yu26}, and Yu and Zhang the count for $k=4$ \cite{YuZhang26}. The last two results use the component-graph method of Gunawan, Rathin and Zhang \cite{GRZ20}. Every network is mapped to the multigraph obtained by contracting its tree components, the possible component graphs with $k+1$ vertices are listed, and the networks attached to each are counted. Within each group the count distinguishes configurations that are invariant under the exchange of two designated vertices from those that are not. Both the list and the distinction grow rapidly with $k$.

The asymptotic behaviour for fixed $k$ is known to first order. Fuchs, Gittenberger and Mansouri \cite{FGM19} and Fuchs, Huang and Yu \cite{FHY} showed that tree-child networks, normal networks and unrestricted networks have the same first-order asymptotics, with the constant $2^{k-1}\sqrt2/k!$. Yu and Zhang extended this to $k=o(\sqrt\ell)$ and computed further terms of the tree-child expansion \cite{YZasym}. Related first-order results concern normal and hybridization networks \cite{FSZ}, galled tree-child networks \cite{CFY}, and galled and reticulation-visible networks \cite{ChangFuchs24}.

The present contribution separates the two sources of difficulty in the exact problem. Every network $N$ carries the group $\Aut(N)$ of its automorphisms fixing all leaves. The count $|\cP_{\ell,k}|$ splits into a weighted count $W_k(\ell)=\sum_N|\Aut(N)|^{-1}$ and a defect $D_k(\ell)=\sum_N(1-|\Aut(N)|^{-1})$. The weighted count coincides, up to an explicit factor, with the vertex-labelled count of \cite{Mansouri22}. It satisfies a recursion over the source layers of the tree-component structure in which no symmetric case analysis occurs and no component graph is listed. The defect is supported on symmetric networks, which are rare. For $k\le5$ every automorphism is an involution or the identity, and the defect becomes the weighted count of networks with a distinguished involution, to which the recursion extends. The result is a closed form for $|\cP_{\ell,k}|$ for every $k\le5$. The closed forms with four and five reticulations are obtained by an exact symbolic evaluation of the recursions, so this part of the work is computer-assisted; the evaluation is checked against exhaustive enumeration, against the published counts and against the asymptotic results. The new case $k=5$ begins with $1896$, $86453$, $3162549$ and $109793685$ networks for $\ell=1,2,3,4$.

On the asymptotic side, the decomposition of non-tree-child networks according to their violations of the tree-child condition gives three results for every $k$, uniformly over explicit ranges of $k$. The first is the excess of unrestricted over tree-child networks, which yields the third term of the expansion of $|\cP_{\ell,k}|$. The second is the excess of reticulation-visible over tree-child networks. The third is the probability that a random network is symmetric.

The present contribution is divided as follows. Section~\ref{sec:weighted} introduces the weighted count and the defect and relates the former to vertex-labelled counts. Section~\ref{sec:aut} analyses leaf-fixing automorphisms, shows that their groups are $2$-groups, and evaluates $D_2$ and $D_3$ in closed form. Section~\ref{sec:recursion} derives the layer-peeling recursion for the weighted count and its generating-function form. Section~\ref{sec:verification} compares the recursion with the published counts for at most three reticulations and identifies a coefficient error in a published generating function. Section~\ref{sec:asympt} proves the asymptotic results for every number of reticulations. Section~\ref{sec:symrec} extends the recursion to networks with a distinguished involution. Section~\ref{sec:k45} obtains the exact counts with four and five reticulations. Section~\ref{sec:beyond} describes what an extension to six or more reticulations requires. Appendix~\ref{app:k4} classifies the symmetric networks with four reticulations by hand, independently of the recursion. Finally, some conclusions are drawn in Sec.~\ref{sec:conclusions}.

\section{The weighted count and the defect}\label{sec:weighted}

\begin{definition}\label{def:network}
A binary phylogenetic network on $[\ell]=\{1,\dots,\ell\}$ is a finite directed acyclic graph without parallel edges with the following properties. One vertex, the root, has in-degree $0$ and out-degree $1$. Exactly $\ell$ vertices, the leaves, have in-degree $1$ and out-degree $0$ and are labelled bijectively by $[\ell]$. Every other vertex is either a tree node, with in-degree $1$ and out-degree $2$, or a reticulation, with in-degree $2$ and out-degree $1$. The set of such networks with $k$ reticulations is denoted by $\cP_{\ell,k}$.
\end{definition}

Equating the sums of in-degrees and out-degrees shows that a network in $\cP_{\ell,k}$ has $\ell+k-1$ tree nodes and $2\ell+3k-1$ edges \cite[Prop.~2.1]{YuZhang26}. Removing the two in-edges of every reticulation decomposes a network into vertex-disjoint trees, its tree components, rooted at the root and at the reticulations. We write $t_\ell=(2\ell-3)!!$ for the number of rooted binary phylogenetic trees on $[\ell]$, with $t_1=1$. We write $F_{n,a}$ for the number of forests of $a$ rooted binary trees whose leaf sets partition $[n]$:
\begin{equation}\label{eq:forests}
F_{n,a}=\frac{(2n-a-1)!}{2^{\,n-a}\,(n-a)!\,(a-1)!}\qquad (n\ge a\ge 1),
\end{equation}
with $F_{0,0}=1$ and $F_{n,a}=0$ otherwise \cite[Prop.~2.8.1]{SempleSteel03}. Every tree in such a forest has a root edge, so a forest counted by $F_{n,a}$ has $2n-a$ edges.

\begin{definition}\label{def:aut}
An automorphism of a network $N$ is a bijection of the vertex set of $N$ onto itself that maps edges onto edges and fixes every leaf. The automorphisms form a group $\Aut(N)$. Every automorphism fixes the root, the only vertex of in-degree $0$.
\end{definition}

\begin{definition}\label{def:WD}
For $\ell\ge 1$ and $k\ge 0$ the weighted count and the defect are
\begin{equation}\label{eq:WD}
W_k(\ell)=\sum_{N\in\cP_{\ell,k}}\frac{1}{|\Aut(N)|},\qquad
D_k(\ell)=\sum_{N\in\cP_{\ell,k}}\Bigl(1-\frac{1}{|\Aut(N)|}\Bigr),
\end{equation}
so that
\begin{equation}\label{eq:split}
|\cP_{\ell,k}|=W_k(\ell)+D_k(\ell).
\end{equation}
\end{definition}

The defect vanishes on every network with trivial automorphism group. In particular it vanishes on all tree-child networks, because every vertex of a tree-child network lies on a path of in-degree-one vertices ending at a fixed leaf, and is therefore fixed.

Deleting the root and its out-edge from $N\in\cP_{\ell,k}$ yields a directed graph $N^\circ$ whose top vertex has out-degree $2$. This is the convention of \cite{Mansouri22}. The graph $N^\circ$ has $n=2\ell+2k-1$ vertices. A vertex labelling of $N^\circ$ is a bijection from its vertex set onto $[n]$. Two vertex-labelled graphs are identified when a label-preserving isomorphism exists. Let $G_{k,n}$ denote the number of vertex-labelled graphs obtained in this way from networks with $k$ reticulations.

\begin{lemma}\label{lem:vertexlabelled}
For $\ell\ge1$, $k\ge0$ and $n=2\ell+2k-1$,
\begin{equation}\label{eq:GW}
G_{k,n}=\binom{n}{\ell}\,(n-\ell)!\;W_k(\ell).
\end{equation}
\end{lemma}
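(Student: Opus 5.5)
We prove the identity by counting vertex labellings of each network and applying the orbit--stabiliser theorem. Fix a network $N\in\cP_{\ell,k}$ and consider $N^\circ$, which has $n=2\ell+2k-1$ vertices: the $\ell$ leaves, the $\ell+k-1$ tree nodes and the $k$ reticulations. In the convention of \cite{Mansouri22}, the vertex labelling of $N^\circ$ is a bijection onto $[n]$, and the leaf labels of $N$ are not recorded separately. So the first step is to fix precisely which vertex-labelled objects $G_{k,n}$ counts. They are the isomorphism classes of pairs $(H,\lambda)$, where $H$ is the unlabelled shape of some $N^\circ$ and $\lambda$ is a vertex labelling. Two pairs are identified when a label-preserving isomorphism exists between them.

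\textbf{Step 1: passing between the two kinds of labelling.} Given a vertex-labelled $N^\circ$, we choose which labels the leaves receive and then relabel. Reading the leaf labels in increasing order and mapping them to $1,\dots,\ell$ produces a leaf-labelled network in $\cP_{\ell,k}$. Conversely, a leaf-labelled network $N$ together with three pieces of data determines a vertex labelling:
\begin{itemize}
\item an $\ell$-subset $S\subseteq[n]$ of leaf labels, chosen in $\binom{n}{\ell}$ ways;
\item the order-preserving assignment of $S$ to leaves $1,\dots,\ell$;
\item a bijection from the $n-\ell$ internal vertices of $N^\circ$ onto $[n]\setminus S$, chosen in $(n-\ell)!$ ways.
\end{itemize}
Because the leaf labelling of $N$ is recovered canonically from $S$, this correspondence is a bijection. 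It pairs (vertex-labelled graphs whose leaf-label set is $S$) with (leaf-labelled networks together with internal labellings, modulo identification). It also shows that the leaf-label set $S$ contributes exactly the factor $\binom{n}{\ell}$.

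\textbf{Step 2: orbit counting.} Fix $N$ and $S$. The group $\Aut(N)$ fixes every leaf and the root, so it acts on the set of $(n-\ell)!$ bijections from the internal vertices onto $[n]\setminus S$, by precomposition. Two such labellings give the same labelled graph exactly when they differ by an isomorphism that preserves the leaf labels. Such an isomorphism is an automorphism of $N$ fixing all leaves, which is precisely Definition~\ref{def:aut}. The action is free, since an automorphism that preserves an injective labelling is the identity. Hence the number of distinct labelled graphs arising from $N$ and $S$ is $(n-\ell)!/|\Aut(N)|$. Summing over $N\in\cP_{\ell,k}$ and over $S$ gives
\[
\binom{n}{\ell}(n-\ell)!\sum_{N}|\Aut(N)|^{-1},
\]
which is the identity \eqref{eq:GW}. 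Distinct leaf-labelled networks $N\ne N'$ give disjoint sets of labelled graphs, because the leaf labelling is recovered from the vertex labels.

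\textbf{Main obstacle.} The delicate point is making sure the identification in $G_{k,n}$ uses all isomorphisms, including ones that could move leaves, and that these collapse to the leaf-fixing group. A label-preserving isomorphism between two vertex-labelled graphs must send the vertex labelled $i$ to the vertex labelled $i$. In particular it maps each leaf to the leaf carrying the same label, so it fixes the leaves once we compare via the canonical leaf labelling. This is why leaf-fixing automorphisms, rather than the full automorphism group of $N^\circ$, are the right stabilisers. We would also check two smaller points. First, deleting the root loses no information, since the root is recoverable as a new parent of the unique vertex of in-degree $0$. Second, the case $\ell=1,k=0$ is consistent with the formula, because there $N^\circ$ is a single vertex and $n=1$.
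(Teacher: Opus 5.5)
Your proposal is correct and follows essentially the same route as the paper. Both arguments recover the leaf labelling from the ranks of the leaf labels, choose the leaf-label set in $\binom{n}{\ell}$ ways, and note that $\Aut(N)$ acts freely on the $(n-\ell)!$ internal labellings with orbits exactly the identified vertex-labelled graphs, so each $N$ contributes $\binom{n}{\ell}(n-\ell)!/|\Aut(N)|$; your extra checks (recoverability of the root and the degenerate case $\ell=1$, $k=0$) are harmless additions.
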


\begin{proof}
A vertex-labelled graph determines a leaf-labelled network: the leaves are relabelled by the ranks of their labels. Conversely, fix $N\in\cP_{\ell,k}$. The vertex-labelled graphs that produce $N$ are obtained by choosing the set of $\ell$ leaf labels, in $\binom{n}{\ell}$ ways, and assigning the remaining $n-\ell$ labels bijectively to the internal vertices. Two such assignments give the same vertex-labelled graph if and only if they differ by an element of $\Aut(N)$. The group acts freely on the assignments, since an automorphism that preserves an assignment fixes every vertex. The number of distinct assignments is therefore $(n-\ell)!/|\Aut(N)|$. Summing over $N$ gives \eqref{eq:GW}.
\end{proof}

Let $G'_k(z)=\sum_n G_{k,n}z^n/n!$ be the exponential generating function of vertex-labelled networks without parallel edges. Since $n$ is odd, $G'_k(z)=z\,\widehat G_k(z^2)$ for a power series $\widehat G_k$. Lemma~\ref{lem:vertexlabelled} gives
\begin{equation}\label{eq:WfromG}
W_k(\ell)=\ell!\,[x^{\ell+k-1}]\,\widehat G_k(x).
\end{equation}
Mansouri \cite{Mansouri22} obtains $\widehat G_k$ for $k\le3$ in the form
\begin{equation}\label{eq:Mform}
\widehat G_k(x)=\frac{a_k(x)-b_k(x)\sqrt{1-2x}}{(1-2x)^{2k-1/2}}.
\end{equation}
The published polynomials for networks without parallel edges are
\begin{align}
a_2(x)&=x^4+\tfrac12x^2+\tfrac32x, & b_2(x)&=x^2+\tfrac32x, \label{eq:M2}\\
a_3(x)&=3x^6+2x^5+4x^4+2x^3+\tfrac{69}{4}x^2, & b_3(x)&=x^5+\tfrac92x^4+11x^3+\tfrac{69}{4}x^2. \label{eq:M3}
\end{align}

\section{Leaf-fixing automorphisms}\label{sec:aut}

Throughout this section $\sigma$ is an automorphism of a network $N$. A vertex $x$ is moved if $\sigma(x)\neq x$ and fixed otherwise.

\begin{lemma}\label{lem:seams}
Let $x$ be a moved vertex and $y$ a fixed vertex.
(i) If $(x,y)$ is an edge, then $y$ is a reticulation whose parents are $x$ and $\sigma(x)$.
(ii) If $(y,x)$ is an edge, then $y$ is a tree node whose children are $x$ and $\sigma(x)$.
\end{lemma}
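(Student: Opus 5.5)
The plan is to apply $\sigma$ to the given edge and then use the degree constraints of Definition~\ref{def:network} together with the absence of parallel edges. Two facts are used throughout. First, $\sigma$ is a bijection mapping edges onto edges, so it preserves in-degree and out-degree, and the edge $(u,v)$ is sent to $(\sigma(u),\sigma(v))$. Second, since $\sigma$ is a permutation of a finite set, the edge map it induces is also a bijection of the edge set.

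For (i), the edge $(x,y)$ is mapped to $(\sigma(x),\sigma(y))=(\sigma(x),y)$. Because $\sigma(x)\neq x$, this is a second edge into $y$, distinct from $(x,y)$. Hence $y$ has in-degree at least $2$. In a network only reticulations have in-degree $2$: leaves and tree nodes have in-degree $1$, and the root has in-degree $0$. So $y$ is a reticulation, and its two parents are exactly $x$ and $\sigma(x)$.

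For (ii), the edge $(y,x)$ is mapped to $(y,\sigma(x))$, which is an edge out of $y$ different from $(y,x)$. So $y$ has out-degree at least $2$. The only vertices of out-degree $2$ are tree nodes; the root has out-degree $1$, reticulations have out-degree $1$, and leaves have out-degree $0$. Therefore $y$ is a tree node, and its children are $x$ and $\sigma(x)$.

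The only point that needs care is that the two edges are genuinely distinct. This holds because they have different tail vertices (in (i)) or different head vertices (in (ii)), so the argument does not depend on the no-parallel-edge assumption. Once that is noted, the degree classification finishes each case immediately, so I expect no real obstacle.
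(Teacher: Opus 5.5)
Your proof is correct and follows the paper's own argument: you apply $\sigma$ to the edge, note that the image edge is distinct because $\sigma(x)\neq x$, and conclude from the degree classification in Definition~\ref{def:network}. Your remark that the distinctness comes from the differing endpoints, not from the absence of parallel edges, is accurate.
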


\begin{proof}
For (i), $\sigma$ maps the edge $(x,y)$ onto $(\sigma(x),y)$. Since $\sigma(x)\neq x$, the vertex $y$ has two distinct parents, so it is a reticulation and its parent set is $\{x,\sigma(x)\}$. For (ii), $\sigma$ maps $(y,x)$ onto $(y,\sigma(x))$, so $y$ has two distinct children $x$ and $\sigma(x)$ and is a tree node.
\end{proof}

A fixed reticulation with a moved parent is called a seam of $\sigma$.

\begin{lemma}\label{lem:seed}
Let $\sigma\neq\mathrm{id}$. There is a moved vertex $v$ all of whose children are fixed. With $w=\sigma(v)$ one has $\sigma(w)=v$, and exactly one of the following holds:
(T) $v$ and $w$ are tree nodes with the same two children $r_1,r_2$, both reticulations with parent set $\{v,w\}$;
(R) $v$ and $w$ are reticulations with the same child $c$, a reticulation with parent set $\{v,w\}$.
\end{lemma}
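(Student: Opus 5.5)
Since $\sigma\neq\mathrm{id}$ and $N$ is finite and acyclic, the plan is to choose $v$ as a moved vertex that is maximal in a topological order, i.e.\ one with no moved descendant. Leaves are fixed and every vertex has a directed path to a leaf, so the set $M$ of moved vertices is nonempty and contains no leaf. Take $v\in M$ such that no vertex of $M$ is reachable from $v$ by a path of positive length. Such a vertex exists because the reachability relation restricted to $M$ is a strict partial order on a finite set. Every child of $v$ is then fixed, which gives the first assertion. I will also use that $\sigma$ permutes $M$ and preserves reachability, so $w=\sigma(v)$ is also a moved vertex whose children are all fixed.

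Next I want to show $\sigma(w)=v$ and to identify the type of $v$. Let $y$ be any child of $v$; $y$ is fixed. Lemma~\ref{lem:seams}(i), applied to the edge $(v,y)$, shows that $y$ is a reticulation with parent set $\{v,w\}$. Applying $\sigma$ to the edge $(v,y)$ gives $(w,y)$, and applying it again gives $(\sigma(w),y)$. Hence $\sigma(w)$ is a parent of $y$, so $\sigma(w)\in\{v,w\}$. Since $\sigma$ is injective and $\sigma(v)=w$ with $w\ne v$, we get $\sigma(w)\neq w$, and therefore $\sigma(w)=v$. The vertex $v$ is not a leaf, because it is moved, and it is not the root, because the root is fixed. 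So $v$ is either a tree node or a reticulation, and $w=\sigma(v)$ has the same type since automorphisms preserve degrees. This gives the two cases.

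In case (T), $v$ has two children $r_1,r_2$. Both are fixed reticulations with parent set $\{v,w\}$ by the previous paragraph, so both are children of $w$ as well. Since $w$ has out-degree $2$ and $r_1\ne r_2$, its children are exactly $r_1,r_2$. In case (R), $v$ has a single child $c$, a fixed reticulation with parent set $\{v,w\}$; $w$ has out-degree $1$ and is a parent of $c$, so $c$ is its unique child. The two cases are mutually exclusive because $v$ cannot be both a tree node and a reticulation. Parallel edges play no role beyond guaranteeing that $r_1\neq r_2$, which holds by definition of children as a set of two vertices.

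The only delicate step is the choice of $v$: a naive choice of a "lowest" moved vertex must really mean one with no moved descendant, not merely one with no moved child. However, "all children fixed" is exactly what is needed, so it suffices to take any moved vertex with no moved child. Such a vertex exists because otherwise one could follow moved children indefinitely, contradicting acyclicity and finiteness. With that in place, the remaining steps are direct applications of Lemma~\ref{lem:seams}.
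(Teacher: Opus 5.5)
Your proof is correct and follows essentially the same route as the paper: pick a moved vertex with no moved descendant (the paper's ``last moved vertex in a topological order''), apply Lemma~\ref{lem:seams}(i) to its fixed children, show that $\sigma(w)$ is also a parent of such a child and use injectivity to get $\sigma(w)=v$, then split by the type of $v$. Two cosmetic points: $M\neq\emptyset$ follows from $\sigma\neq\mathrm{id}$, not from the leaves being fixed; and your closing hedging is unnecessary, since the argument uses only that all children of $v$ are fixed, so either of your two choices of $v$ works.
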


\begin{proof}
Order the vertices topologically and let $v$ be the last moved vertex. Its children come later in the order and are therefore fixed. The vertex $v$ is not a leaf, because leaves are fixed. By Lemma~\ref{lem:seams}(i) every child of $v$ is a reticulation with parent set $\{v,w\}$. Applying $\sigma$, the vertex $\sigma(w)$ is also a parent of every child of $v$, so $\sigma(w)\in\{v,w\}$. Since $\sigma$ is injective and $\sigma(v)=w$, it follows that $\sigma(w)=v$. If $v$ is a tree node, its two children form the pair of case (T), and $w$, which has the same degree, has the same children. If $v$ is a reticulation, its child is the vertex $c$ of case (R).
\end{proof}

The proof of Lemma~\ref{lem:seed} rests on a single observation, illustrated in Fig.~\ref{fig:seeds}, which is also the starting point of the two lemmas that follow. If $x$ is the last moved vertex in a topological order, its children are fixed, and by Lemma~\ref{lem:seams}(i) each child is a reticulation whose parents are exactly the elements of the orbit of $x$. Since a reticulation has two parents, that orbit has at most two elements. The chain of implications is
\begin{center}
\begin{tabular}{c}
$\sigma\neq\mathrm{id}$\\
$\Downarrow$\\
$x$ is the last moved vertex in a topological order\\
$\Downarrow$\\
the children of $x$ are fixed\\
$\Downarrow$\\
each child of $x$ is a reticulation whose parents form the orbit of $x$\\
$\Downarrow$\\
the orbit of $x$ has at most two elements.
\end{tabular}
\end{center}
For an automorphism of prime order every moved vertex has an orbit whose size is that prime, which must therefore be $2$; this is Lemma~\ref{lem:autbound}. For an automorphism of order four, the same reasoning applied to the last vertex with an orbit of four elements shows that its children are reticulations with orbits of two elements, and a count of the reticulations below them gives at least six; this is Lemma~\ref{lem:exponent2}.

\begin{figure}[ht]
\centering
\begin{tikzpicture}[>={Stealth[length=2mm]},
  tn/.style={circle,draw,fill=white,inner sep=0pt,minimum size=6pt},
  rt/.style={circle,draw,fill=gray!45,inner sep=0pt,minimum size=6pt},
  lab/.style={font=\footnotesize}]
 \node[tn,label={[lab]left:$v$}] (v) at (0,2.0) {};
 \node[tn,label={[lab]right:$w$}] (w) at (1.6,2.0) {};
 \node[rt,label={[lab]left:$r_1$}] (r1) at (0,0.9) {};
 \node[rt,label={[lab]right:$r_2$}] (r2) at (1.6,0.9) {};
 \draw[->] (0,2.6) -- (v); \draw[->] (1.6,2.6) -- (w);
 \draw[->] (v) -- (r1); \draw[->] (v) -- (r2); \draw[->] (w) -- (r1); \draw[->] (w) -- (r2);
 \draw[->] (r1) -- (0,0.3); \draw[->] (r2) -- (1.6,0.3);
 \draw[<->,dashed] (0.2,2.25) to[bend left=30] node[lab,above] {$\sigma$} (1.4,2.25);
 \node[lab] at (0.8,-0.3) {(T)};
 \node[rt,label={[lab]left:$v$}] (v2) at (4.4,2.0) {};
 \node[rt,label={[lab]right:$w$}] (w2) at (6.0,2.0) {};
 \node[rt,label={[lab]right:$c$}] (c) at (5.2,0.9) {};
 \draw[->] (4.1,2.6) -- (v2); \draw[->] (4.7,2.6) -- (v2);
 \draw[->] (5.7,2.6) -- (w2); \draw[->] (6.3,2.6) -- (w2);
 \draw[->] (v2) -- (c); \draw[->] (w2) -- (c); \draw[->] (c) -- (5.2,0.3);
 \draw[<->,dashed] (4.6,1.75) to[bend right=30] node[lab,below] {$\sigma$} (5.8,1.75);
 \node[lab] at (5.2,-0.3) {(R)};
\end{tikzpicture}
\caption{The two seeds of Lemma~\ref{lem:seed}. The exchanged vertices $v,w$ are the last vertices moved by $\sigma$; their children $r_1,r_2$ or $c$ are fixed reticulations. White circles are tree nodes, grey circles are reticulations, and edges are directed downwards.}
\label{fig:seeds}
\end{figure}

\begin{lemma}\label{lem:autbound}
Every element of $\Aut(N)$ of prime order is an involution. Consequently $\Aut(N)$ is a $2$-group.
\end{lemma}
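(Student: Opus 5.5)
The plan is to apply Lemma~\ref{lem:seed} to an automorphism $\sigma$ of prime order $p$, and then use Cauchy's theorem to conclude that the group is a $2$-group.

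Let $\sigma\in\Aut(N)$ have prime order $p$. Then $\sigma\neq\mathrm{id}$, and every orbit of $\langle\sigma\rangle$ on the vertex set has size $1$ or $p$. A moved vertex $x$ has an orbit of size greater than one, so the orbit of every moved vertex has size exactly $p$.

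By Lemma~\ref{lem:seed} there is a moved vertex $v$, with $w=\sigma(v)$, such that $\sigma(w)=v$. Hence $\sigma^2(v)=v$, so the orbit of $v$ has at most two elements. It also has more than one element, because $v$ is moved. Its size is therefore $2$, and $p=2$. Since $\sigma^2$ has order dividing $p$ and $p=2$, we get $\sigma^2=\mathrm{id}$, so $\sigma$ is an involution.

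The underlying observation in that lemma is the one already isolated in the text. Take the last moved vertex in a topological order. Its children are fixed reticulations, and their parent sets contain the whole orbit of that vertex, which bounds the orbit size by the in-degree $2$. One could rerun this directly for general $p$: the children of $v$ are fixed, and by Lemma~\ref{lem:seams}(i) each child has every $\sigma^i(v)$ as a parent, so $p\le 2$. It is cleaner, though, simply to cite Lemma~\ref{lem:seed}.

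For the second assertion, $\Aut(N)$ is finite because it is a group of permutations of a finite vertex set. If an odd prime $q$ divided $|\Aut(N)|$, Cauchy's theorem would give an element of order $q$, contradicting the first assertion. So $|\Aut(N)|$ is a power of $2$, which is what it means for $\Aut(N)$ to be a $2$-group.

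There is no real obstacle here. The only point needing care is the orbit-size step: in a cyclic group of prime order, any non-fixed point has an orbit of full size $p$, because the stabiliser is a proper subgroup and so must be trivial.
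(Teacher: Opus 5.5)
Your proof is correct and essentially the paper's. The paper reruns the last-moved-vertex argument directly: the orbit of that vertex has $p$ elements, all of which are parents of a fixed child by Lemma~\ref{lem:seams}(i), so $p\le 2$. That is exactly the alternative you sketch, and citing Lemma~\ref{lem:seed} packages the same observation. The Cauchy step for the $2$-group conclusion is identical.
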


\begin{proof}
Let $\sigma$ have prime order $p$ and let $x$ be the last moved vertex in a topological order. The orbit of $x$ has $p$ elements, since $p$ is prime. The children of $x$ come later in the order and are therefore fixed. By Lemma~\ref{lem:seams}(i) a child $y$ of $x$ has every element of the orbit of $x$ as a parent, so $p\le2$. If an odd prime divided $|\Aut(N)|$, Cauchy's theorem would provide an element of that order.
\end{proof}

\begin{lemma}\label{lem:exponent2}
If $k\le5$, every automorphism of $N\in\cP_{\ell,k}$ has order at most two, and $\Aut(N)\cong\mathbb{Z}_2^m$ for some $m$.
\end{lemma}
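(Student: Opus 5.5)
Because $\Aut(N)$ is a $2$-group by Lemma~\ref{lem:autbound}, it suffices to show that for $k\le5$ there is no automorphism of order exactly four. A group in which every element satisfies $\sigma^2=\mathrm{id}$ is abelian, and a finite abelian group of exponent two is $\mathbb{Z}_2^m$. So assume $\sigma$ has order $4$ and aim to produce at least six distinct reticulations, contradicting $k\le5$.

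First locate a vertex with a $4$-orbit. Since $\sigma^2\neq\mathrm{id}$, some vertex is moved by $\sigma^2$. Such a vertex $x$ has orbit of size $4$ under $\sigma$: the orbit size divides $4$, and it is not $1$ or $2$ because $\sigma^2(x)\neq x$. Order the vertices topologically and let $x$ be the last vertex with a $4$-orbit, so $x$ is the last vertex moved by $\sigma^2$. Apply Lemma~\ref{lem:seams}(i) to $\tau=\sigma^2$: every child $y$ of $x$ is fixed by $\tau$, so $y$ is a reticulation with parent set $\{x,\sigma^2(x)\}$. The child $y$ cannot be fixed by $\sigma$. If it were, the orbit of $x$ would consist of four parents of $y$, contradicting Lemma~\ref{lem:seams}(i) for $\sigma$. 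Hence $y$ has a $\sigma$-orbit of size exactly $2$.

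Next count reticulations. The children of $x$ are reticulations $y$, and the children of $\sigma(x)$ are the images $\sigma(y)$. The vertices $x$ and $\sigma^2(x)$ share their children, and $\sigma(x)$ and $\sigma^3(x)$ share theirs. These two child sets are distinct, because the parents of $y$ are $\{x,\sigma^2x\}$ while the parents of $\sigma y$ are $\{\sigma x,\sigma^3x\}$. This gives at least two reticulations $y,\sigma(y)$ forming a $2$-orbit. Now split into the two seed shapes.

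\begin{itemize}
\item If $x$ is a tree node, it has two reticulation children $y_1,y_2$, and together with their images this gives four reticulations. Their parents are in the $4$-orbit, so above $x$ there must be further structure.
\item If $x$ is a reticulation, its parents also lie in $4$-orbits. Note that $x,\sigma x,\sigma^2x,\sigma^3x$ are then four reticulations in addition to $y,\sigma y$, which already gives six.
\end{itemize}

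In the tree-node case the bound must be pushed from four to six, and this is the main obstacle. I would trace upward from the $4$-orbit $\{x,\sigma x,\sigma^2x,\sigma^3x\}$. Their parents are not fixed by $\sigma^2$: a parent fixed by $\sigma^2$ with moved child would be a tree node with children $x,\sigma^2x$ by Lemma~\ref{lem:seams}(ii) for $\tau$. Such a parent $p$ is a tree node whose orbit under $\sigma$ has size at most $2$, giving $p$ and $\sigma p$ with children $\{x,\sigma^2x\}$ and $\{\sigma x,\sigma^3x\}$. Then consider the first vertex, in topological order, that is moved by $\sigma$. Its parent is fixed, so by Lemma~\ref{lem:seams}(ii) it is a tree node whose two children are swapped. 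Following the two swapped branches down to the $4$-orbit, I expect that the branches must split again under $\sigma^2$. Each such split is realised either by a tree node whose children are exchanged, which requires two merging reticulations below (as in Lemma~\ref{lem:seed}(T)), or by reticulations themselves.

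The plan is a careful case analysis of how the $4$-orbit is reached from the $\sigma$-fixed part. The goal is to show that at least two more reticulations are forced, for instance by re-merges of the $\sigma^2$-orbits, or by reticulations among the vertices on the paths between the first $2$-orbit and the $4$-orbit, because the graph has no parallel edges. Together with the four already found, this gives six.
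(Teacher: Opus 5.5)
Your reduction to excluding an element $\sigma$ of order four is correct and agrees with the paper. So are the choice of $x$ as the last vertex moved by $\sigma^2$, the proof that its children are reticulations with parent set $\{x,\sigma^2x\}$ and $\sigma$-orbits of size two, and the reticulation case.

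The gap is the tree-node case, which is the substance of the lemma. There you have only the four reticulations $r_1,r_2,\sigma r_1,\sigma r_2$, writing $r_1,r_2$ for the children of $x$. What follows is a plan ending in ``I expect'', not an argument. Moreover, the plan looks in the wrong place. Example~\ref{ex:dihedral} has exactly six reticulations, and its four-orbit is $\{y_1,y_2,y_3,y_4\}$ with children $c_1,c_2,c'_1,c'_2$. The only vertices above the four-orbit are the root and the tree nodes $p,x,x'$. The two remaining reticulations $s_1,s_2$ lie \emph{below} $c_1,c_2,c'_1,c'_2$. So no analysis of how the four-orbit is reached from the fixed part can supply the two missing reticulations in general. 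This includes splits under $\sigma^2$ on the way down and reticulations on those paths. Your assertion that the parents of the four-orbit are not fixed by $\sigma^2$ is also false: in that example the parent $x$ of $y_1$ is fixed by $\sigma^2$.

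The missing idea is to go downward and use that leaves are fixed. For $i=1,2$, follow a directed path from $r_i$ to a leaf and let $s_i$ be its first $\sigma$-fixed vertex. Its predecessor is moved, so by Lemma~\ref{lem:seams}(i) the vertex $s_i$ is a reticulation. Being fixed, it differs from the four moved ones, so if $s_1\neq s_2$ you have six.

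The case $s_1=s_2=s$ needs a further step. First, $r_1,r_2,\sigma r_1,\sigma r_2$ are pairwise incomparable. A descendant relation would place one vertex of the four-orbit below a child of another, and a suitable power of $\sigma$ would then close a directed cycle. Second, the parents of $s$ are some moved $z$ and $\sigma z$. If both paths enter $s$ through the same parent, they meet before $s$. If the path from $r_1$ enters through $z$ and the path from $r_2$ through $\sigma z$, then the path from $r_2$ to $\sigma z$ meets the $\sigma$-image of the path from $r_1$ to $z$, which starts at $\sigma r_1$.

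In either case consider the first common vertex $m$ of the two paths, which consist of moved vertices. By incomparability, $m$ is not a starting vertex. It is therefore entered along two distinct edges, so it is a moved reticulation. It differs from the four because it is a common descendant of two of them. The same holds for $\sigma m\neq m$. Together with $s$ this gives seven reticulations.
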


\begin{proof}
By Lemma~\ref{lem:autbound} it suffices to exclude an element $\sigma$ of order four. Let $Y$ be the set of vertices with $\sigma$-orbits of size four; $Y\neq\emptyset$, since otherwise $\sigma^2=\mathrm{id}$. Let $y\in Y$ be the last element of $Y$ in a topological order.

A child $c$ of $y$ is not in $Y$, and it is not fixed by $\sigma$, since a fixed child would have the four parents $y,\sigma y,\sigma^2y,\sigma^3y$. Hence $c$ has an orbit $\{c,\sigma c\}$ of size two. Applying $\sigma^2$ to the edge $(y,c)$ shows that $c$ is a reticulation with parents $y$ and $\sigma^2y$. Likewise $\sigma c$ is a reticulation with parents $\sigma y$ and $\sigma^3y$.

If $y$ is a reticulation, then $y,\sigma y,\sigma^2y,\sigma^3y,c,\sigma c$ are six distinct reticulations.

Let $y$ be a tree node with children $c_1,c_2$. The reticulations $c_1,c_2,\sigma c_1,\sigma c_2$ are distinct, because $c_1$ and $c_2$ have the parent set $\{y,\sigma^2y\}$, whereas $\sigma c_1$ and $\sigma c_2$ have the parent set $\{\sigma y,\sigma^3y\}$. None of the four is a descendant of another. Indeed, such a relation would place one of $y,\sigma y,\sigma^2y,\sigma^3y$ below a child of another of them, and applying a suitable power of $\sigma$ would close a directed cycle.

For $i=1,2$ follow a path from $c_i$ to a leaf and let $s_i$ be its first fixed vertex. The predecessor of $s_i$ is moved, so by Lemma~\ref{lem:seams}(i) the vertex $s_i$ is a reticulation whose parents are a moved vertex and its image. Being fixed, $s_1$ and $s_2$ differ from the four moved reticulations. If $s_1\neq s_2$, six reticulations occur.

Suppose $s_1=s_2=:s$, with parents $z$ and $\sigma z$. If both paths enter $s$ through the same parent, they meet before $s$. Their first common vertex $m$ is then a moved reticulation, because neither of $c_1,c_2$ is a descendant of the other. If the paths enter through different parents, say the path from $c_1$ through $z$ and the path from $c_2$ through $\sigma z$, then $\sigma z$ is a descendant of $c_2$, and, applying $\sigma$ to the path from $c_1$ to $z$, also of $\sigma c_1$. The paths from $c_2$ and from $\sigma c_1$ to $\sigma z$ consist of moved vertices and meet in a moved reticulation $m$. In both cases $m$ is a common descendant of two of the four reticulations, so it differs from all four; the same holds for $\sigma m$, and $\sigma m\neq m$. Hence $c_1,c_2,\sigma c_1,\sigma c_2,m,\sigma m,s$ are seven reticulations. In all cases at least six reticulations occur.
\end{proof}

A group of exponent two is abelian, which gives the second statement of Lemma~\ref{lem:exponent2}. The bound six is attained.

\begin{example}\label{ex:dihedral}
Consider the network of Fig.~\ref{fig:dihedral}, with two leaves and six reticulations. Its edges are $p\to x$, $p\to x'$, $x\to y_1$, $x\to y_3$, $x'\to y_2$, $x'\to y_4$, then $y_1,y_3\to c_1,c_2$ and $y_2,y_4\to c'_1,c'_2$, then $c_1,c'_1\to s_1$ and $c_2,c'_2\to s_2$, and finally $s_1$ and $s_2$ above the leaves $1$ and $2$. The permutation
\[
\sigma=(x\,x')(y_1\,y_2\,y_3\,y_4)(c_1\,c'_1)(c_2\,c'_2)
\]
maps edges onto edges; for instance $(x',y_4)\mapsto(x,y_1)$ and $(y_4,c'_2)\mapsto(y_1,c_2)$. It is an automorphism of order four. The transposition $\tau=(y_1\,y_3)$ is an automorphism as well. Since $\sigma\tau\sigma^{-1}=(y_2\,y_4)\neq\tau$, the group generated by $\sigma$ and $\tau$ is dihedral of order eight.
\end{example}

\begin{figure}[t]
\centering
\begin{tikzpicture}[>={Stealth[length=2mm]},
  tn/.style={circle,draw,fill=white,inner sep=0pt,minimum size=6pt},
  rt/.style={circle,draw,fill=gray!45,inner sep=0pt,minimum size=6pt},
  lab/.style={font=\footnotesize}]
 \node[tn,label={[lab]right:$p$}] (p) at (0,5.0) {};
 \node[tn,label={[lab]left:$x$}] (x) at (-2,4.1) {};
 \node[tn,label={[lab]right:$x'$}] (xx) at (2,4.1) {};
 \node[tn,label={[lab]left:$y_1$}] (y1) at (-3,3.1) {};
 \node[tn,label={[lab]right:$y_3$}] (y3) at (-1,3.1) {};
 \node[tn,label={[lab]left:$y_2$}] (y2) at (1,3.1) {};
 \node[tn,label={[lab]right:$y_4$}] (y4) at (3,3.1) {};
 \node[rt,label={[lab]left:$c_1$}] (c1) at (-3,2.0) {};
 \node[rt,label={[lab]right:$c_2$}] (c2) at (-1,2.0) {};
 \node[rt,label={[lab]left:$c'_1$}] (d1) at (1,2.0) {};
 \node[rt,label={[lab]right:$c'_2$}] (d2) at (3,2.0) {};
 \node[rt,label={[lab]left:$s_1$}] (s1) at (-1,0.8) {};
 \node[rt,label={[lab]right:$s_2$}] (s2) at (1,0.8) {};
 \node[lab] (l1) at (-1,0.0) {$1$};
 \node[lab] (l2) at (1,0.0) {$2$};
 \draw[->] (0,5.6) -- (p);
 \draw[->] (p) -- (x); \draw[->] (p) -- (xx);
 \draw[->] (x) -- (y1); \draw[->] (x) -- (y3);
 \draw[->] (xx) -- (y2); \draw[->] (xx) -- (y4);
 \draw[->] (y1) -- (c1); \draw[->] (y1) -- (c2);
 \draw[->] (y3) -- (c1); \draw[->] (y3) -- (c2);
 \draw[->] (y2) -- (d1); \draw[->] (y2) -- (d2);
 \draw[->] (y4) -- (d1); \draw[->] (y4) -- (d2);
 \draw[->] (c1) -- (s1); \draw[->] (d1) -- (s1);
 \draw[->] (c2) -- (s2); \draw[->] (d2) -- (s2);
 \draw[->] (s1) -- (l1); \draw[->] (s2) -- (l2);
\end{tikzpicture}
\caption{The network of Example~\ref{ex:dihedral}. White circles are tree nodes, grey circles are reticulations, and edges are directed downwards. Its automorphism group is dihedral of order eight and contains an element of order four.}
\label{fig:dihedral}
\end{figure}

The following configurations occur in the classification for $k\le3$; see Fig.~\ref{fig:gadgets}. An entry is an edge from outside the configuration into one of its vertices. An exit is an edge from one of its vertices to the outside.

\begin{definition}\label{def:gadgets}
The crossed cherry $\Gamma_1$ consists of tree nodes $p,v,w$ and reticulations $r_1,r_2$, with edges $p\to v$, $p\to w$ and $u\to r_i$ for $u\in\{v,w\}$ and $i\in\{1,2\}$. It has one entry, into $p$, and two exits, out of $r_1$ and $r_2$.

The twin reticulations $\Gamma_2$ consist of tree nodes $a,b$ and reticulations $v,w,c$, with edges $a\to v$, $a\to w$, $b\to v$, $b\to w$, $v\to c$ and $w\to c$. They have two entries, into $a$ and $b$, and one exit, out of $c$.

The propagated cross $\Gamma_3$ consists of tree nodes $p,q,q',v,w$ and reticulations $x,r_1,r_2$, with edges $p\to q$, $p\to q'$, $q\to v$, $q\to x$, $q'\to w$, $q'\to x$ and $u\to r_i$ for $u\in\{v,w\}$ and $i\in\{1,2\}$. It has one entry, into $p$, and three exits, out of $x$, $r_1$ and $r_2$.
\end{definition}

\begin{figure}[t]
\centering
\begin{tikzpicture}[>={Stealth[length=2mm]},
  tn/.style={circle,draw,fill=white,inner sep=0pt,minimum size=6pt},
  rt/.style={circle,draw,fill=gray!45,inner sep=0pt,minimum size=6pt},
  lab/.style={font=\footnotesize}]
 \node[tn,label={[lab]right:$p$}] (p) at (0,3.0) {};
 \node[tn,label={[lab]left:$v$}] (v) at (-0.7,2.0) {};
 \node[tn,label={[lab]right:$w$}] (w) at (0.7,2.0) {};
 \node[rt,label={[lab]left:$r_1$}] (r1) at (-0.7,1.0) {};
 \node[rt,label={[lab]right:$r_2$}] (r2) at (0.7,1.0) {};
 \draw[->] (0,3.6) -- (p);
 \draw[->] (p) -- (v); \draw[->] (p) -- (w);
 \draw[->] (v) -- (r1); \draw[->] (v) -- (r2);
 \draw[->] (w) -- (r1); \draw[->] (w) -- (r2);
 \draw[->] (r1) -- (-0.7,0.4); \draw[->] (r2) -- (0.7,0.4);
 \node[lab] at (0,-0.1) {$\Gamma_1$};
 \node[tn,label={[lab]left:$a$}] (a) at (3.3,3.0) {};
 \node[tn,label={[lab]right:$b$}] (b) at (4.7,3.0) {};
 \node[rt,label={[lab]left:$v$}] (v2) at (3.3,2.0) {};
 \node[rt,label={[lab]right:$w$}] (w2) at (4.7,2.0) {};
 \node[rt,label={[lab]right:$c$}] (c) at (4.0,1.0) {};
 \draw[->] (3.3,3.6) -- (a); \draw[->] (4.7,3.6) -- (b);
 \draw[->] (a) -- (v2); \draw[->] (a) -- (w2);
 \draw[->] (b) -- (v2); \draw[->] (b) -- (w2);
 \draw[->] (v2) -- (c); \draw[->] (w2) -- (c);
 \draw[->] (c) -- (4.0,0.4);
 \node[lab] at (4.0,-0.1) {$\Gamma_2$};
 \node[tn,label={[lab]right:$p$}] (p3) at (8.5,3.7) {};
 \node[tn,label={[lab]left:$q$}] (q) at (7.8,3.0) {};
 \node[tn,label={[lab]right:$q'$}] (qq) at (9.2,3.0) {};
 \node[rt,label={[lab]right:$x$}] (x) at (8.5,2.4) {};
 \node[tn,label={[lab]left:$v$}] (v3) at (7.6,2.0) {};
 \node[tn,label={[lab]right:$w$}] (w3) at (9.4,2.0) {};
 \node[rt,label={[lab]left:$r_1$}] (r13) at (7.6,1.0) {};
 \node[rt,label={[lab]right:$r_2$}] (r23) at (9.4,1.0) {};
 \draw[->] (8.5,4.3) -- (p3);
 \draw[->] (p3) -- (q); \draw[->] (p3) -- (qq);
 \draw[->] (q) -- (v3); \draw[->] (q) -- (x);
 \draw[->] (qq) -- (w3); \draw[->] (qq) -- (x);
 \draw[->] (x) -- (8.5,1.95);
 \draw[->] (v3) -- (r13); \draw[->] (v3) -- (r23);
 \draw[->] (w3) -- (r13); \draw[->] (w3) -- (r23);
 \draw[->] (r13) -- (7.6,0.4); \draw[->] (r23) -- (9.4,0.4);
 \node[lab] at (8.5,-0.1) {$\Gamma_3$};
\end{tikzpicture}
\caption{The configurations $\Gamma_1$, $\Gamma_2$ and $\Gamma_3$ of Definition~\ref{def:gadgets}. White circles are tree nodes and grey circles are reticulations. Dangling arrows are entries (above) and exits (below). The edges are directed downwards.}
\label{fig:gadgets}
\end{figure}

Each configuration admits a contraction. The configuration $\Gamma_1$ is replaced by a single tree node $u$: the entry becomes the in-edge of $u$, and the two exits become the out-edges of $u$. The configuration $\Gamma_2$ is replaced by a single reticulation $\rho$: the two entries become the in-edges of $\rho$, and the exit becomes its out-edge. The configuration $\Gamma_3$ is replaced by two tree nodes $u\to u'$: the entry becomes the in-edge of $u$, the exit out of $x$ becomes the second out-edge of $u$, and the exits out of $r_1,r_2$ become the out-edges of $u'$. In each case the inverse operation, expansion, restores the configuration. The contracted network has $2$, $2$ and $3$ fewer reticulations, respectively.

\begin{proposition}\label{prop:class}
Let $N\in\cP_{\ell,k}$ with $k\le 3$ and $\Aut(N)\neq\{\mathrm{id}\}$. Then $k\ge 2$.
If $k=2$, then $N$ contains exactly one copy of $\Gamma_1$ and $\Aut(N)\cong\mathbb{Z}_2$.
If $k=3$, then exactly one of the following holds.
(A) $N$ contains a copy of $\Gamma_2$ whose two entries leave the same tree node; then $\Aut(N)\cong\mathbb{Z}_2\times\mathbb{Z}_2$.
(B) $N$ contains a copy of $\Gamma_1$ whose two exits enter distinct vertices; then $\Aut(N)\cong\mathbb{Z}_2$.
(C) $N$ contains a copy of $\Gamma_2$ whose two entries leave distinct vertices; then $\Aut(N)\cong\mathbb{Z}_2$.
(D) $N$ contains a copy of $\Gamma_3$; then $\Aut(N)\cong\mathbb{Z}_2$.
\end{proposition}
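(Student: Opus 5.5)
Every nontrivial automorphism is an involution when $k\le3$, by Lemma~\ref{lem:exponent2}. The plan is to take a nontrivial $\sigma\in\Aut(N)$ and count the reticulations it forces, starting from the seed of Lemma~\ref{lem:seed}.

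\emph{Seed (T).} The two fixed reticulations $r_1,r_2$ are children of the exchanged tree nodes $v,w$. Consider the parents of $v$ and $w$.
\begin{itemize}
\item If $v$ and $w$ share a parent $p$, then $p$ is fixed by Lemma~\ref{lem:seams}(ii) and we have a copy of $\Gamma_1$.
\item Otherwise $v$ and $w$ have distinct parents. These parents are moved, since a fixed parent of a moved vertex is a tree node whose two children are that vertex and its image, which would make the parents coincide.
\end{itemize}
In the second situation, move upward along moved vertices. Every moved vertex has an ancestor path reaching the fixed root, so the first fixed vertex $y$ on an upward path has a moved child $x$. By Lemma~\ref{lem:seams}(ii), $y$ is a tree node with children $x$ and $\sigma(x)$. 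The set of moved vertices that are descendants of $y$ and ancestors of $v$ then splits into two exchanged halves. If these halves are disjoint paths up to $v,w$, we get a larger crossed structure. Reconverging lineages inside the halves, or edges leaving the halves toward fixed vertices, force extra reticulations: each such exit is a seam by Lemma~\ref{lem:seams}(i). With $k\le3$ and two reticulations already used by $r_1,r_2$, the budget leaves at most one extra reticulation. Since $\sigma$ pairs moved reticulations, that extra one must be fixed. It is therefore a seam $x$ with parents $q,q'=\sigma(q)$. This yields exactly $\Gamma_3$ when $q,q'$ are children of the fixed $p$ and parents of $v,w$. Longer exchanged paths would need more seams or more tree-node exits, and those exits must ultimately end in reticulations.

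\emph{Seed (R).} The exchanged reticulations $v,w$ have a fixed child $c$. This gives three reticulations, so $k=3$ and every other vertex is a tree node. The moved tree nodes above $v,w$ must be organised so that no further reticulation arises; a moved tree node whose child lies outside the orbit would need a seam. The only possibility is that the parents of $v$ and $w$ are two tree nodes $a,b$ with $\{a,b\}$ invariant, giving $\Gamma_2$. There are two subcases.
\begin{itemize}
\item $\sigma$ fixes $a$ and $b$. Then by Lemma~\ref{lem:seams}(ii) both have children $\{v,w\}$. If $a,b$ have a common parent, that parent is a tree node with children $a,b$ and we are in case (A). Otherwise we are in case (C).
\item $\sigma$ swaps $a$ and $b$. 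This needs another fixed tree node above them, with children $a,b$, and a different exchange structure. I expect to rule it out: $a$ and $b$ are then moved tree nodes whose children $v,w$ must be mapped consistently, which forces $a\to v$ to go to $b\to w$. This is consistent, so instead I would check it reduces to the previous configuration. This subcase analysis is where I am least certain.
\end{itemize}
For $k=2$, seed (R) needs three reticulations, so only (T) with $\Gamma_1$ is possible.

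\emph{Group orders and mutual exclusivity.} For the group orders, I would identify all automorphisms of each configuration. In case (A) the swap $v\leftrightarrow w$ and the swap $a\leftrightarrow b$ are independent, giving $\mathbb Z_2^2$. In the other cases exactly one involution exists, so the group is $\mathbb Z_2$. I would also show that a second independent involution would need another seed and hence at least two more reticulations. Finally, exclusivity follows by counting reticulations: each configuration already uses all of $k$. For $k=3$ with $\Gamma_1$, the exits of $r_1$ and $r_2$ entering the same vertex would make the third reticulation part of $\Gamma_2$, case (A); so case (B) requires distinct targets.

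The \textbf{main obstacle} is the upward exhaustion argument in seed (T): showing rigorously that the moved region above $v,w$ cannot grow without spending reticulations. I would organise it as follows. Let $M$ be the moved set. Every edge from $M$ into the fixed set ends at a seam reticulation, and every edge from the fixed set into $M$ starts at a tree node with both children in $M$. Counting in-degrees and out-degrees on $M\cup\{\text{its boundary}\}$ would then bound $|M|$ in terms of the number of reticulations.
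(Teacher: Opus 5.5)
Your strategy (take the seed of Lemma~\ref{lem:seed}, then spend the reticulation budget) is the paper's. As written, though, the two steps that carry the whole classification are asserted rather than proved.

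\begin{itemize}
\item In seed (T) you pass from ``at most one further reticulation, necessarily fixed'' to ``it is therefore a seam''. You never show that a moved parent $p_v$ forces any new seam. You also never show that the moved region stops at $q,q'$ below a common fixed tree node. ``Longer exchanged paths would need more seams'' is exactly what has to be proved.
\item In seed (R), the claim that the parents of $v$ and of $w$ form a single invariant pair $\{a,b\}$ is given no argument. This is where moved parents with $a\to b$ or $b\to\sigma(a)$ must be excluded by showing that they cost a fourth reticulation; these are the configurations $\Gamma_9$ and $\Gamma_8$ of Appendix~\ref{app:k4}.
\item Your subcase ``$\sigma$ swaps $a$ and $b$'' is not to be ruled out. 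It occurs as $\sigma=(a\,b)(v\,w)$ in case (A), and what must be shown is that it forces a common parent of $a$ and $b$.
\item Exclusivity does not follow from ``each configuration uses all of $k$'': in (B) the crossed cherry uses only two of the three reticulations.
\item A second involution need not cost new reticulations. In (A) all three involutions live on $v,w,c$.
\end{itemize}

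The in/out-degree count you sketch at the end closes all of this at once, and it is cleaner than the paper's path-following; you should carry it out. For any nontrivial $\sigma$, let $M$ be the moved set, with $t_M$ tree nodes, $m_M$ reticulations and $e_M$ internal edges. Let $z$ and $s$ be the numbers of splitters and seams. By Lemma~\ref{lem:seams}, every edge between $M$ and the fixed vertices either leaves a splitter (two per splitter) or enters a seam (two per seam), so
\[
t_M+2m_M=2z+e_M,\qquad 2t_M+m_M=2s+e_M .
\]
Hence $s=z+\tfrac12(t_M-m_M)$ and $k\ge s+m_M=z+\tfrac12|M|$, with $z\ge1$; this is the balance relation of Lemma~\ref{lem:blockid}. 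So $M\neq\emptyset$ forces $k\ge2$, and $k\le3$ forces $|M|\le4$.

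When $|M|=2$ one has $e_M=0$, since an edge inside an orbit would give a $2$-cycle. Using this and acyclicity, the cases are:
\begin{itemize}
\item two tree nodes, with $z=1$ and $s=2$: the crossed cherry;
\item two reticulations, with $z=2$ and $s=1$: the twin reticulations;
\item four tree nodes, with $z=1$ and $s=3$: this forces $\Gamma_3$;
\item two tree nodes and two reticulations, with $z=s=1$: this forces twin tree nodes over twin reticulations under one splitter, i.e.\ case (A) with $\sigma=(a\,b)(v\,w)$;
\item four reticulations: these need $z\ge2$, hence $k\ge4$.
\end{itemize}
Since $\Aut(N)$ is elementary abelian by Lemma~\ref{lem:exponent2}, $|\Aut(N)|$ is one plus the number of involutions. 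Checking which of these moved sets each network can carry then gives three involutions in (A) and one in each of (B)--(D), together with mutual exclusivity.
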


\begin{proof}
Let $\sigma\neq\mathrm{id}$ and let $v,w$ be as in Lemma~\ref{lem:seed}. The argument counts the reticulations that $\sigma$ forces. Moved reticulations occur in pairs $\{y,\sigma(y)\}$, and seams are additional fixed reticulations.

Consider first case (T). The seams $r_1,r_2$ are two reticulations. Let $p_v$ and $p_w$ be the parents of $v$ and $w$. Since $\sigma$ maps the in-edge of $v$ onto the in-edge of $w$, one has $\sigma(p_v)=p_w$. If $p_v$ is fixed, then $p_w=p_v=:p$, and Lemma~\ref{lem:seams}(ii) shows that $p$ is a tree node with children $v,w$. The vertices $p,v,w,r_1,r_2$ then form $\Gamma_1$. If $p_v$ is moved and is a reticulation, then $p_v,p_w,r_1,r_2$ are four distinct reticulations, so $k\ge4$. If $p_v$ is a moved tree node, let $y$ be its child other than $v$; then $\sigma(y)$ is the child of $p_w$ other than $w$. Suppose $y$ is moved. A directed path from $y$ to a leaf contains a last moved vertex $y'$, and by Lemma~\ref{lem:seams}(i) its successor on the path is a seam with parent set $\{y',\sigma(y')\}$. This seam differs from $r_1$ and $r_2$, because $v$ and $w$ are not descendants of $y$; the only parent of $v$ is $p_v$, and a path from $y$ to $p_v$ would close a cycle. If $y$ or $y'$ is a reticulation, the pair $\{y',\sigma(y')\}$ adds two more reticulations. Otherwise $y'$ is a tree node, and its second child is either a further seam or a moved vertex below which a further seam occurs. In all cases $k\ge4$. Hence, for $k\le3$, the vertex $y$ is fixed and is a seam with parent set $\{p_v,p_w\}$; this is the third reticulation. Repeating the argument for the parents of $p_v$ and $p_w$ shows that they coincide in a tree node $p$, since otherwise a fourth reticulation would be forced. The resulting configuration is $\Gamma_3$.

Consider next case (R). The vertices $v,w,c$ are three reticulations, so for $k\le3$ no further reticulation exists. A parent of $v$ that is a reticulation has $v$ as its only child; if it were fixed it would also be a parent of $w$, and if it were moved it would add two reticulations. Hence the parents $a,b$ of $v$ are tree nodes, and the parents of $w$ are $\sigma(a),\sigma(b)$. Suppose a parent of $v$, say $a$, has a child $z\notin\{v,w\}$. Then $z$ is fixed or moved. A fixed $z$ is a seam by Lemma~\ref{lem:seams}(i) when $\sigma(a)\neq a$; a moved $z$ forces a seam or a reticulation pair below it as before. Either way $k\ge4$. A fixed $a$ with child $z$ is excluded as well: by Lemma~\ref{lem:seams}(ii) the children of $a$ are then $v$ and $\sigma(v)=w$. Therefore both $a$ and $b$ have the children $\{v,w\}$, and $\{a,b,v,w,c\}$ is a copy of $\Gamma_2$.

It remains to determine $\Aut(N)$ and to separate the cases. In $\Gamma_1$ the transposition $(v\,w)$ is an automorphism. In $\Gamma_2$ the transposition $(v\,w)$ is an automorphism. The transposition $(a\,b)$ is an automorphism if and only if $a$ and $b$ have the same parent; this is case (A). In case (A), the vertex set $\{p,a,b,v,w\}$, where $p$ is the common parent, is itself a copy of $\Gamma_1$ whose exits enter the common reticulation $c$. In $\Gamma_3$ the product $(q\,q')(v\,w)$ is an automorphism. For $k=2$ only case (T) with a fixed $p_v$ is possible, since the other cases force at least three reticulations. For $k=3$, every nontrivial automorphism has a seed of type (T) or (R) built from the three reticulations of $N$, and the possibilities are exactly those listed. In case (A) both seeds are present and generate $\mathbb{Z}_2\times\mathbb{Z}_2$; in cases (B), (C) and (D) only one seed is present.
\end{proof}

\begin{theorem}\label{thm:defects}
For $\ell\ge1$,
\begin{align}
D_1(\ell)&=0,\label{eq:D1}\\
D_2(\ell)&=\tfrac12(\ell-1)\,t_\ell,\label{eq:D2}\\
D_3(\ell)&=\tfrac34(2\ell-1)\,t_\ell+\tfrac12(\ell+1)\,|\cP_{\ell,1}|+\tfrac12(\ell-2)^{+}\,t_\ell,\label{eq:D3}
\end{align}
where $(\ell-2)^+=\max(\ell-2,0)$ and $|\cP_{\ell,1}|=\ell\,(2\ell)!/(2^\ell\ell!)-2^{\ell-1}\ell!$ \cite{Zhang19}.
\end{theorem}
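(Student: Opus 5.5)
By Definition~\ref{def:WD}, $D_k(\ell)$ is a sum of $1-|\Aut(N)|^{-1}$ over the networks with nontrivial automorphism group. Proposition~\ref{prop:class} describes these networks for $k\le3$, so the plan is to count each class through the contraction/expansion operations defined after Definition~\ref{def:gadgets}. Each class will correspond bijectively to a choice of a vertex or an edge in a smaller network that we already know how to count. The weights are $\tfrac12$ when $\Aut(N)\cong\mathbb{Z}_2$ and $\tfrac34$ when $\Aut(N)\cong\mathbb{Z}_2\times\mathbb{Z}_2$.

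For $k=1$, Proposition~\ref{prop:class} gives $\Aut(N)$ trivial, so $D_1=0$. For $k=2$, every symmetric $N$ contains exactly one copy of $\Gamma_1$.
\begin{itemize}
\item Contracting it removes both reticulations and yields a rooted binary tree $T$ on $[\ell]$ with a marked tree node $u$.
\item Conversely, expanding any of the $\ell-1$ tree nodes of any tree creates no parallel edges, because the two children of $u$ are distinct vertices.
\item The result has exactly two reticulations and contains $\Gamma_1$, so it lies in $\cP_{\ell,2}$ and is symmetric.
\end{itemize}
Uniqueness of the copy makes contraction and expansion mutually inverse. This gives $(\ell-1)t_\ell$ networks, each of weight $\tfrac12$, which is \eqref{eq:D2}.

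For $k=3$ I will treat the four cases (A)--(D) separately and check that the maps are bijections onto the stated targets.

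\emph{Case (A).} The vertices $p,a,b,v,w,c$ form a block with one entry, into $p$, and one exit, out of $c$. Replacing the block by a single edge gives a tree, and the block can be inserted into any of its $2\ell-1$ edges, root edge included. This gives $(2\ell-1)t_\ell$ networks of weight $\tfrac34$.

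\emph{Case (B).} Contracting $\Gamma_1$ gives a network in $\cP_{\ell,1}$ with a marked tree node $u$. The hypothesis that the exits enter distinct vertices is exactly what makes the two children of $u$ distinct, and networks in $\cP_{\ell,1}$ have no parallel edges anyway. There are $\ell+k-1=\ell$ tree nodes to choose, giving $\ell\,|\cP_{\ell,1}|$ networks.

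\emph{Case (C).} Contracting $\Gamma_2$ gives a network in $\cP_{\ell,1}$ with its unique reticulation $\rho$ marked. Its two in-edges automatically leave distinct vertices. This gives $|\cP_{\ell,1}|$ networks.

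Cases (B) and (C) together account for the term $\tfrac12(\ell+1)|\cP_{\ell,1}|$.

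\emph{Case (D).} Contracting $\Gamma_3$ gives a tree with a marked edge $u\to u'$ between two non-leaf tree nodes, together with the designation of the other child of $u$ as the target of the exit out of $x$. That designation is forced once the edge is chosen. A binary tree on $\ell\ge2$ leaves has $\ell-2$ such internal edges and none when $\ell\le2$, which yields $\tfrac12(\ell-2)^+t_\ell$.

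The step I expect to be the main obstacle is verifying that each correspondence really is a bijection. This requires three checks.
\begin{itemize}
\item \emph{Exclusivity.} In a symmetric network of each case, the configuration is unique and the four cases do not overlap. Proposition~\ref{prop:class} asserts ``exactly one'', but I would recheck it in the contraction language. For example, in case (A) the block also contains a copy of $\Gamma_1$ whose exits coincide, and this network must not be counted again in (B). Similarly, the expansion of a network in $\cP_{\ell,1}$ at a tree node whose two children are the two parents of its reticulation must produce case (C), not (A) or (B), or else be counted in only one place.
\item \emph{Well-definedness.} Every expansion must produce a valid network: acyclic, with no parallel edges, and with exactly the predicted automorphism group. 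In particular, no extra symmetry may arise from interaction with the rest of the network. For instance, expanding $\Gamma_1$ at a node of a one-reticulation network must not accidentally create a second seed. I would settle this by rerunning the seed argument of Lemma~\ref{lem:seed} on the expanded network.
\item \emph{Small $\ell$.} The degenerate cases $\ell=1,2$ should be checked directly against exhaustive enumeration.
\end{itemize}
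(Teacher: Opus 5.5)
Your proposal is correct and follows the paper's proof essentially step for step. The same contractions produce a tree with a marked tree node for $k=2$, and for $k=3$ they produce: a tree with a marked edge in class (A); a network of $\cP_{\ell,1}$ with a marked tree node, or with its reticulation marked, in classes (B) and (C); and a tree with a marked edge between two tree nodes in class (D). The weights are $\tfrac34$ and $\tfrac12$, and the exclusivity and group-order checks you list are what the paper takes from the ``exactly one'' clause of Proposition~\ref{prop:class}. One small correction: expanding $\Gamma_1$ at a tree node of $N'\in\cP_{\ell,1}$ whose children are the two parents of its reticulation lands in class (B), not (C), since the exits still enter distinct vertices and no $\Gamma_2$ is created.
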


\begin{proof}
Equation \eqref{eq:D1} follows from Proposition~\ref{prop:class}. For the remaining identities, each class of Proposition~\ref{prop:class} is put in bijection with marked networks of lower reticulation number by contraction. Each network then contributes $1-|\Aut(N)|^{-1}$, which equals $\tfrac12$ for $\mathbb{Z}_2$ and $\tfrac34$ for $\mathbb{Z}_2\times\mathbb{Z}_2$.

For $k=2$, contracting $\Gamma_1$ yields a tree on $[\ell]$ with a marked tree node. Conversely, expanding a tree node of a tree yields a network of $\cP_{\ell,2}$ containing $\Gamma_1$. A tree has $\ell-1$ tree nodes, so there are $(\ell-1)t_\ell$ such networks, which gives \eqref{eq:D2}.

For $k=3$ the four classes are counted as follows. In class (A) the vertices $p,a,b,v,w,c$ have one entry and one exit. Contracting them to a single edge yields a tree on $[\ell]$ with a marked edge. There are $(2\ell-1)t_\ell$ such pairs, each weighted by $\tfrac34$. In class (B), contracting $\Gamma_1$ yields a network $N'\in\cP_{\ell,1}$ with a marked tree node $u$. No parallel edge arises, because the two exits enter distinct vertices. Every one of the $\ell$ tree nodes of every $N'$ can be expanded, so this class has $\ell|\cP_{\ell,1}|$ elements. In class (C), contracting $\Gamma_2$ yields $N'\in\cP_{\ell,1}$ together with its unique reticulation; the entries leave distinct vertices, so no parallel edge arises, and the class has $|\cP_{\ell,1}|$ elements. In class (D), contracting $\Gamma_3$ yields a tree with a marked tree node $u'$ whose parent $u$ is a tree node. Every tree node except the child of the root qualifies, so the class has $(\ell-2)t_\ell$ elements for $\ell\ge2$ and none for $\ell=1$. Classes (B), (C) and (D) are weighted by $\tfrac12$, and summing gives \eqref{eq:D3}.
\end{proof}

The first values are $D_2(\ell)=0,\tfrac12,3,\tfrac{45}{2}$ and $D_3(\ell)=\tfrac34,\tfrac{21}{4},\tfrac{219}{4},\tfrac{2655}{4},\tfrac{37125}{4}$ for $\ell=1,2,\dots$.

\section{The layer-peeling recursion}\label{sec:recursion}

\begin{definition}\label{def:arooted}
Let $a\ge1$. An $a$-rooted structure is a finite acyclic directed multigraph with the following properties. It has $a$ roots, labelled by $[a]$, each of in-degree $0$ and out-degree $1$. It has labelled leaves and the tree nodes and reticulations of Definition~\ref{def:network}. Every vertex is reachable from a root. Parallel edges occur only as parallel pairs, that is, as a tree node both of whose out-edges enter the same reticulation. The weight of a structure $S$ is $\omega(S)=1/|\Aut_e(S)|$, where $\Aut_e(S)$ is the group of incidence-preserving permutations of vertices and edges fixing every root and every leaf. In particular, each parallel pair contributes the transposition of its two edges. We write $Q_a(\ell,m;p)$ for the sum of $\omega(S)$ over the structures with $\ell$ leaves, $m$ reticulations and exactly $p$ parallel pairs, and set $Q_0(\ell,m;p)=[\ell=m=p=0]$.
\end{definition}

A $1$-rooted structure without parallel pairs is a network, and its root edge is the root edge of Definition~\ref{def:network}. Consequently
\begin{equation}\label{eq:WQ}
W_k(\ell)=Q_1(\ell,k;0).
\end{equation}

Counting degrees as in Section~\ref{sec:weighted}, an $a$-rooted structure with $\ell$ leaves and $m$ reticulations has $\ell+m-a$ tree nodes and
\begin{equation}\label{eq:edges}
E_a(\ell,m)=2\ell+3m-a
\end{equation}
edges, including the root edges.

Removing every edge that enters a reticulation decomposes a structure $S$ into trees rooted at the roots of $S$ and at its reticulations. The layer $\Lambda(S)$ is the union of the $a$ trees rooted at the roots of $S$. A reticulation of $S$ is closed if both of its parents lie in $\Lambda(S)$, open if exactly one does, and internal otherwise. For integers $E\ge0$ and $b\ge0$ we write $\rise{E}{b}=E(E+1)\cdots(E+b-1)$, with $\rise{E}{0}=1$.

The recursion for $Q_a$ rests on a decomposition of a structure $S$ into three pieces, illustrated in Fig.~\ref{fig:layer}. The first piece is the layer forest. It is formed by the trees of $\Lambda(S)$, in which every edge leaving the layer is cut and its head is replaced by a new leaf, called a slot. The second piece is the remainder, the structure left after deleting $\Lambda(S)$, whose roots are the closed reticulations. The third piece is the placement, which records the positions on the edges of the remainder at which the open reticulations are reinserted.

The indices of the recursion record these choices. The index $j$ is the number of leaves of $S$ in the layer, and $a'$ and $b'$ are the numbers of closed and open reticulations. A parallel pair of $S$ either arises inside the layer, as a cherry formed by the two slots of a closed reticulation, or is a parallel pair of the remainder that receives no reinserted reticulation. Accordingly, $p_1$ is the number of parallel pairs of the first kind, $p_2$ is the number of parallel pairs of the remainder, and $q=p-p_1$ of these must remain unbroken. The layer factor $L_a$ counts layer forests with exactly $p_1$ such cherries. The placement factor $\Pi$ counts the placements of $b'$ points on the $E'$ edges of the remainder that leave exactly $q$ of its parallel pairs unbroken. In the example of Fig.~\ref{fig:layer} one has $j=1$, $a'=1$, $b'=1$ and $m'=0$: the reticulation $x$ is closed, the reticulation $y$ is open, and the remainder is a single tree carrying one point.

\begin{figure}[t]
\centering
\begin{tikzpicture}[>={Stealth[length=2mm]},
  tn/.style={circle,draw,fill=white,inner sep=0pt,minimum size=6pt},
  rt/.style={circle,draw,fill=gray!45,inner sep=0pt,minimum size=6pt},
  sl/.style={rectangle,draw,fill=white,inner sep=0pt,minimum size=5pt},
  pt/.style={cross out,draw,inner sep=0pt,minimum size=5pt,thick},
  lab/.style={font=\footnotesize}]
 \fill[gray!12,rounded corners] (-1.85,4.15) -- (1.25,4.15) -- (1.25,2.6) -- (-0.35,2.6) -- (-0.35,1.95) -- (-1.85,1.95) -- cycle;
 \node[tn,label={[lab]right:$a$}] (a) at (0,3.8) {};
 \node[tn,label={[lab]left:$b$}] (b) at (-0.8,3.0) {};
 \node[tn,label={[lab]right:$c$}] (c) at (0.8,3.0) {};
 \node[lab] (l1) at (-1.5,2.15) {$1$};
 \node[rt,label={[lab]right:$x$}] (x) at (0.05,1.9) {};
 \node[rt,label={[lab]right:$y$}] (y) at (1.1,0.5) {};
 \node[tn,label={[lab]left:$d$}] (d) at (0,1.2) {};
 \node[lab] (l2) at (-0.5,0.2) {$2$};
 \node[lab] (l3) at (1.1,-0.4) {$3$};
 \draw[->] (0,4.5) -- (a);
 \draw[->] (a) -- (b); \draw[->] (a) -- (c);
 \draw[->] (b) -- (l1); \draw[->] (b) -- (x);
 \draw[->] (c) -- (x); \draw[->] (c) -- (y);
 \draw[->] (x) -- (d); \draw[->] (d) -- (l2); \draw[->] (d) -- (y);
 \draw[->] (y) -- (l3);
 \node[lab] at (0,-1.0) {(a)};
 \node[tn,label={[lab]right:$a$}] (A) at (4.5,3.8) {};
 \node[tn,label={[lab]left:$b$}] (B) at (3.7,3.0) {};
 \node[tn,label={[lab]right:$c$}] (C) at (5.3,3.0) {};
 \node[lab] (L1) at (3.2,2.1) {$1$};
 \node[sl,label={[lab]below:$x^{(1)}$}] (X1) at (4.1,2.1) {};
 \node[sl,label={[lab]below:$x^{(2)}$}] (X2) at (4.9,2.1) {};
 \node[sl,label={[lab]below:$y$}] (Y) at (5.8,2.1) {};
 \draw[->] (4.5,4.5) -- (A);
 \draw[->] (A) -- (B); \draw[->] (A) -- (C);
 \draw[->] (B) -- (L1); \draw[->] (B) -- (X1);
 \draw[->] (C) -- (X2); \draw[->] (C) -- (Y);
 \node[lab] at (4.5,-1.0) {(b)};
 \node[tn,label={[lab]right:$d$}] (D) at (9.0,2.4) {};
 \node[lab] (L2) at (8.4,1.2) {$2$};
 \node[lab] (L3) at (9.6,0.4) {$3$};
 \draw[->] (9.0,3.4) node[above,lab] {$x$} -- (D);
 \draw[->] (D) -- (L2); \draw[->] (D) -- (L3);
 \node[pt,label={[lab]right:$y$}] at (9.3,1.4) {};
 \node[lab] at (9.0,-1.0) {(c)};
\end{tikzpicture}
\caption{Layer peeling. (a) A network in $\cP_{3,2}$; the shaded region contains the layer, the tree component of the root. (b) The layer forest: one tree with the leaf $1$, the two slots $x^{(1)},x^{(2)}$ of the closed reticulation $x$, and the slot of the open reticulation $y$. (c) The remainder: a $1$-rooted structure with root $x$ and leaves $2,3$, carrying the point $y$ on the edge into the leaf $3$. Here $j=1$, $a'=1$, $b'=1$ and $m'=0$.}
\label{fig:layer}
\end{figure}

\begin{theorem}\label{thm:recursion}
For $a\ge1$,
\begin{multline}\label{eq:recursion}
Q_a(\ell,m;p)=\sum_{j=0}^{\ell}\binom{\ell}{j}\sum_{\substack{a',b'\ge0\\ a'+b'\le m}}\frac{1}{a'!\,b'!}
\sum_{p_1=0}^{\min(p,a')}L_a(j,a',b';p_1)\\
\times\sum_{p_2\ge p-p_1}Q_{a'}(\ell-j,m';p_2)\,\Pi(E',b';p_2,p-p_1),
\end{multline}
where $m'=m-a'-b'$, $E'=E_{a'}(\ell-j,m')$, the terms with $a'=0$ are understood with $Q_0$,
\begin{equation}\label{eq:layer}
L_a(j,a',b';p_1)=\frac{a!}{2^{a'}}\sum_{s=p_1}^{a'}(-1)^{s-p_1}\binom{s}{p_1}\binom{a'}{s}\,F_{j+2a'+b'-s,\,a},
\end{equation}
and
\begin{equation}\label{eq:placement}
\Pi(E,b;p_2,q)=\binom{p_2}{q}\sum_{i=0}^{p_2-q}(-1)^i\binom{p_2-q}{i}\,\rise{(E-2q-2i)}{b}.
\end{equation}
\end{theorem}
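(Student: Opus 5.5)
The plan is to turn the weighted count into an exact count of rigidified objects, so that the weight $1/|\Aut_e(S)|$ is produced by orbit counting rather than by any case analysis.

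\textbf{Step 1: the layer is rigid.} Every vertex of $\Lambda(S)$ is joined to one of the labelled roots by a path of in-degree-one vertices. An element of $\Aut_e(S)$ therefore fixes $\Lambda(S)$ pointwise. The only exception is the swap of the two edges of a parallel pair sitting at the bottom of the layer, whose tail is a layer tree node and whose head is a closed reticulation.

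\textbf{Step 2: decorate and apply orbit--stabiliser.} A decoration of $S$ is a labelling of its closed reticulations by $[a']$ and of its open reticulations by $[b']$. Since the layer is fixed, the group $\Aut_e(S)$ acts on decorations, and the stabiliser of a decoration is $\Aut_e$ of the decorated object. Orbit--stabiliser gives
\[
\omega(S)=\frac{1}{a'!\,b'!}\sum_{\text{decorations}}\frac{1}{|\Aut_e(S,\text{dec})|}.
\]
This accounts for the prefactor $1/(a'!\,b'!)$ in \eqref{eq:recursion}.

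\textbf{Step 3: decompose a decorated structure.} A decorated $S$ is then decomposed bijectively into a triple of pieces.
\begin{itemize}
\item The first piece is the choice of the $j$ layer leaves, which accounts for $\binom{\ell}{j}$.
\item The second piece is the layer forest. It has $a$ trees with labelled roots and $j+2a'+b'$ leaves: the $j$ leaves of $S$, an unordered pair of slots for each closed reticulation, and one slot for each open reticulation.
\item The third piece is the remainder $R$, an $a'$-rooted structure whose roots are labelled by the decoration of the closed reticulations.
\item The open reticulations appear as $b'$ labelled points placed in sequences on the $E'$ edges of $R$.
\end{itemize}
The inverse map reinserts each labelled point as a reticulation subdividing its edge, and sends the second in-edge of that reticulation to the corresponding slot.

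\textbf{Step 4: the layer factor.} The factor $a!$ labels the roots of the forests counted by $F_{n,a}$. The two slots of each closed reticulation are interchangeable, which gives the factor $2^{-a'}$. When the two slots form a cherry, this factor $\tfrac12$ is exactly the weight of the edge transposition of the resulting parallel pair, so it is consistent with Step~1. To obtain exactly $p_1$ cherries, apply inclusion--exclusion over a set of $s$ forced cherries. Contracting each forced cherry to a single leaf gives $F_{j+2a'+b'-s,a}$ forests, and the usual sieve with coefficients $(-1)^{s-p_1}\binom{s}{p_1}$ yields \eqref{eq:layer}.

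\textbf{Step 5: the placement factor.} There are $\rise{E'}{b'}$ ways to distribute $b'$ distinct points into ordered sequences on $E'$ edges. We require that exactly $q=p-p_1$ of the $p_2$ parallel pairs of $R$ receive no point. Choose these $q$ pairs, in $\binom{p_2}{q}$ ways, and remove their $2q$ edges. Among the remaining $p_2-q$ pairs, none may be left empty, and inclusion--exclusion over the $i$ pairs that are empty gives the alternating sum with $\rise{(E'-2q-2i)}{b'}$. This is \eqref{eq:placement}. Degree counting shows that $E'=E_{a'}(\ell-j,m')$, by \eqref{eq:edges}.

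\textbf{Step 6 (the main obstacle): matching the weights.} We must show that $\sum_R\omega(R)\cdot\#\{\text{placements}\}$ equals the sum of $1/|\Aut_e|$ over decorated structures $S$. This is again orbit--stabiliser, now for $\Aut_e(R)$ acting on labelled placements. Because the points are labelled, an automorphism fixing a placement fixes every edge that carries a point. Such an automorphism is therefore an automorphism of $R$ that fixes those edges and may still swap the edges of the $q$ unbroken parallel pairs. This is exactly $\Aut_e$ of the glued decorated $S$ restricted to the non-layer part; the layer contributes only its own cherry swaps, already counted in Step~4.

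The delicate point is to check that gluing creates no extra symmetry and destroys none other than the swaps of broken parallel pairs. For this, verify the following:
\begin{itemize}
\item an automorphism of the decorated $S$ restricts to automorphisms of the layer and of $R$ that respect the labelled slots, the labelled roots of $R$ and the labelled points;
\item conversely, such a pair of automorphisms extends uniquely to an automorphism of $S$.
\end{itemize}
Once this correspondence is in place, summing over $j,a',b',p_1,p_2$ yields \eqref{eq:recursion}. The convention $Q_0$ covers the case in which the layer is all of $S$.
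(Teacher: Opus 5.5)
Your architecture is sound: labelling the boundary reticulations, cutting $S$ into layer forest, remainder and placement, sieving over cherries and unbroken pairs, and using orbit--stabiliser for placements on $R$. Step~5 and the placement half of Step~6 are correct.

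\textbf{The gap is Step~1, on which all your statements about the layer rest.} Fixedness propagates upwards along in-degree-one vertices, from a fixed vertex to its unique parent, but not downwards: a fixed tree node may have its two children exchanged (Lemma~\ref{lem:seams}(ii)). Take the network of $\cP_{2,2}$ with edges root$\to p$, $p\to v,w$, $v,w\to r_1,r_2$, $r_i\to i$. Its layer is $\{\mathrm{root},p,v,w\}$, both $r_i$ are closed, and $(v\,w)$ is an automorphism fixing every decoration in your sense. Thus $|\Aut_e(S,\mathrm{dec})|=2$, whereas Step~6 identifies this group with the cherry swaps times the automorphisms of $(R,\text{placement})$, both trivial here.

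Dually, Step~4's reading of $2^{-a'}$ as ``slots interchangeable, except that cherries are fixed'' is false. The simultaneous swap of both pairs fixes the ordered-slot tree $((r_1^{(1)},r_2^{(1)}),(r_1^{(2)},r_2^{(2)}))$. Accordingly $L_1(0,2,0;0)=\tfrac52$ is not the number $3$ of unordered-slot trees without pair cherries, but the weighted sum $1+1+\tfrac12$. Your formula comes out right only because these two misidentifications compensate each other. So the weight matching, which you single out as the main obstacle, is not established.

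There is also a smaller problem in Step~2. The sum must run over isomorphism classes of decorated structures (orbits), not over all $a'!\,b'!$ decorations of a fixed $S$. For the class~(A) network of $\cP_{1,3}$ (root$\to p\to a,b$, $a,b\to v,w$, $v,w\to c\to1$) one has $|\Aut_e(S)|=4$. The two decorations form one orbit, with stabiliser $\{\mathrm{id},(a\,b)\}$, which again moves layer vertices. Summing over decorations gives $\tfrac12$ instead of $\tfrac14$.

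\textbf{Repair.} Put the order of the two slots of each closed reticulation into the decoration. There are then $a'!\,b'!\,2^{a'}$ decorations and
\[
\omega(S)=(a'!\,b'!\,2^{a'})^{-1}\sum_{\mathrm{orbits}}|\mathrm{Stab}|^{-1}.
\]
\begin{itemize}
\item A stabilising automorphism fixes every leaf of the layer forest. It therefore fixes the layer pointwise, because a forest with labelled roots and labelled leaves has no nontrivial automorphism.
\item Such an automorphism is therefore exactly an automorphism of the remainder with its placement that fixes roots and points. Conversely, every such automorphism extends by the identity on the layer.
\item A cherry transposition is not in the stabiliser, but it reverses one slot order; this is how its factor $\tfrac12$ enters.
\end{itemize}
The layer is then counted exactly by $a!\sum_s(-1)^{s-p_1}\binom{s}{p_1}\binom{a'}{s}F_{j+2a'+b'-s,a}$, with no symmetry left to track, and the rest of your argument goes through. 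This is in substance the paper's proof. It labels all internal vertices and orders the edges of every parallel pair. It then lets $S_{a'}\times S_{b'}\times\mathbb{Z}_2^{a'}$ act freely on auxiliary data that include the slot orders.
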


\begin{proof}
Let $S$ be an $a$-rooted structure. Let $j$ be the number of its leaves in $\Lambda(S)$, $a'$ the number of closed reticulations and $b'$ the number of open ones. The proof decomposes $S$ into three pieces of data and then accounts for the weights.

The first piece is the layer forest. Replace the head of every edge that leaves $\Lambda(S)$ by a new leaf, called a slot. The result is a forest of $a$ trees with labelled roots. Its leaves are the $j$ leaves of $S$ in the layer, $2a'$ slots for the closed reticulations and $b'$ slots for the open ones. Label the slots auxiliarily: the closed reticulations by $[a']$, the two slots of each closed reticulation by an order, and the open reticulations by $[b']$.

The second piece is the remainder. Delete $\Lambda(S)$. Each closed reticulation becomes a root, labelled by its auxiliary label. Each open reticulation now has in-degree $1$ and out-degree $1$; it is suppressed, and its position is recorded as a labelled point on the resulting edge. Several points on one edge are linearly ordered. The result is an $a'$-rooted structure $R$ with $\ell-j$ leaves and $m'$ reticulations, together with $b'$ labelled points on its edges. Its edge count $E'$ is given by \eqref{eq:edges}.

The third piece records the parallel pairs. A parallel pair of $S$ is either a cherry of the layer forest formed by the two slots of one closed reticulation, or a parallel pair of $R$ that carries no point. A parallel pair of $R$ that carries at least one point is not a parallel pair of $S$.

The construction is reversible. Given a forest with labelled roots and labelled slots, an $a'$-rooted structure with labelled roots, and a placement of $b'$ labelled points on its edges, one recovers $S$ uniquely. The two slots of closed reticulation $i$ are merged into a reticulation that is identified with root $i$ of the remainder. Each point is replaced by a reticulation whose second parent is the parent of the corresponding open slot.

Placing $b$ labelled points successively on a structure with $E$ edges, where each point subdivides the edge it lands on, can be done in $\rise{E}{b}$ ways; excluding a set of $f$ edges gives $\rise{(E-f)}{b}$. Equation \eqref{eq:placement} is the inclusion–exclusion count of placements in which a prescribed set of $q$ of the $p_2$ parallel pairs of $R$ receives no point and each of the other $p_2-q$ pairs receives at least one.

Forests with $a$ labelled roots on $n$ labelled leaves number $a!\,F_{n,a}$. Prescribing that $s$ given slot pairs form cherries amounts to contracting each such cherry to a single leaf, leaving $a!\,F_{n-s,a}$ forests. Inclusion–exclusion gives the number with exactly $p_1$ cherry pairs. The factor $2^{-a'}$ removes the auxiliary order of the two slots in each pair. When the two slots of a pair form a cherry, exchanging them does not change the labelled forest; the corresponding factor $\tfrac12$ is the weight of the parallel pair in $\omega$. This gives \eqref{eq:layer}.

For the weights, let $N_S$ be the number of internal vertices of $S$, that is, of vertices other than roots and leaves. The group $\Aut_e(S)$ acts freely on the set of rigid decorations of $S$, each consisting of a bijective labelling of the internal vertices by $[N_S]$ together with an order of the two edges of every parallel pair. By the orbit--stabiliser theorem the number of rigid decorations is $N_S!\,2^{p}/|\Aut_e(S)|$, so that $\omega(S)$ equals this number divided by $N_S!\,2^{p}$. Applied to rigidly decorated structures, the decomposition above is a bijection onto rigidly decorated data, namely layer forest, remainder and placement, supplemented by an auxiliary labelling of the closed reticulations by $[a']$, of the open reticulations by $[b']$ and an order of the two slots of every closed reticulation. The internal labels are distributed between the layer and the remainder; this is the labelled product of species and contributes no factor once both sides are divided by their factorials. The auxiliary data are acted on freely by $S_{a'}\times S_{b'}\times\mathbb{Z}_2^{a'}$. The first two factors produce $1/(a'!\,b'!)$. The third produces $2^{-a'}$ in \eqref{eq:layer}, except on cherry pairs, where the order of the two slots coincides with the order of the two edges of the resulting parallel pair and the corresponding factor $\tfrac12$ is the one carried by $\omega$. The binomial $\binom{\ell}{j}$ distributes the leaf labels between the layer and the remainder. Summing over $j,a',b',p_1$ and $p_2$ gives \eqref{eq:recursion}.

Every term with $a'\ge1$ refers to a remainder with fewer reticulations than $S$. Every term with $a'=0$ forces an empty remainder. The recursion therefore terminates.
\end{proof}

Write $u=\sqrt{1-2x}$, so that $x=\frac12(1-u^2)$, and let $T(x)=1-u$ be the exponential generating function $\sum_n t_nx^n/n!$ of rooted binary trees. Let $D=\frac{d}{dx}$ and $\theta=xD$. From $du/dx=-1/u$ one obtains, for every integer $n$,
\begin{equation}\label{eq:uops}
D\,u^n=-n\,u^{n-2},\qquad \theta\,u^n=\tfrac n2\bigl(u^n-u^{n-2}\bigr),
\end{equation}
so that $D$ and $\theta$ act on the ring $\mathbb{Q}[u,u^{-1}]$ of Laurent polynomials in $u$.

\begin{theorem}\label{thm:closure}
For all $a,m,p$, the exponential generating function
\[
\widehat Q_{a,m,p}(x)=\sum_{\ell\ge0}Q_a(\ell,m;p)\,\frac{x^\ell}{\ell!}
\]
is a Laurent polynomial in $u$ with rational coefficients. It is obtained from \eqref{eq:recursion} by three substitutions:
\begin{itemize}
\item the convolution $\sum_j\binom{\ell}{j}$ becomes the product of the generating functions of the layer factor and of the remainder term;
\item the forest number $F_{j+c,a}$ in the layer factor becomes $D^{c}\bigl(T^a/a!\bigr)$;
\item the placement factor, a polynomial $g$ in the number $r=\ell-j$ of leaves of the remainder, becomes the operator $g(\theta)$ applied to $\widehat Q_{a',m',p_2}$.
\end{itemize}
Moreover, for every integer $n$ and $\ell\ge0$,
\begin{equation}\label{eq:uextract}
\ell!\,[x^\ell]\,u^n=\prod_{i=0}^{\ell-1}(2i-n).
\end{equation}
\end{theorem}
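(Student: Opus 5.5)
Under the EGF transform, each ingredient of \eqref{eq:recursion} is an operation that preserves $\mathbb{Q}[u,u^{-1}]$. I would then conclude by induction on $m$, and within fixed $m$ on $a$, following the termination argument at the end of the proof of Theorem~\ref{thm:recursion}.

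I would begin with \eqref{eq:uextract}. Write $u^n=(1-2x)^{n/2}$ and expand binomially. The coefficient of $x^\ell$ is $\binom{n/2}{\ell}(-2)^\ell$, and $\ell!$ times this equals $\prod_{i=0}^{\ell-1}(n/2-i)(-2)=\prod_{i=0}^{\ell-1}(2i-n)$. This holds for all integers $n$, including negative ones, since the binomial series applies with $|2x|<1$.

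Next come the three substitutions, one per bullet.

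\textbf{Convolution.} The sum $\sum_j\binom{\ell}{j}A(j)B(\ell-j)$ is the coefficient sequence of the product of exponential generating functions. This is standard.

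\textbf{Forest number.} Since $T^a/a!$ is the EGF of unordered forests of $a$ trees, $a!F_{n,a}$ is the number of forests with labelled roots, consistent with the factor $a!$ in \eqref{eq:layer}. Thus $F_{n,a}=n![x^n]T^a/a!$. Differentiation shifts EGF coefficients, so $F_{j+c,a}=j![x^j]D^c(T^a/a!)$. Since $T=1-u$, the function $T^a$ is a polynomial in $u$. By \eqref{eq:uops}, $D$ maps $\mathbb{Q}[u,u^{-1}]$ to itself, so each layer factor, as a finite combination over $s$, has an EGF in $j$ that lies in the ring.

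\textbf{Placement factor.} The factor $\Pi(E',b';p_2,q)$ involves $E'=2(\ell-j)+3m'-a'$, which is linear in $r=\ell-j$. The rising factorials are therefore polynomials in $r$, so $\Pi=g(r)$ with $g\in\mathbb{Q}[r]$. Multiplying coefficients $Q_{a'}(r,m';p_2)$ by $g(r)$ corresponds to applying $g(\theta)$ to $\widehat Q_{a',m',p_2}$, since $\theta$ multiplies the coefficient of $x^r/r!$ by $r$. Again by \eqref{eq:uops}, $\theta$ preserves the ring.

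For the induction, $\widehat Q_{0,0,0}=1$ and the other $\widehat Q_0$ vanish. The recursion expresses $\widehat Q_{a,m,p}$ as a finite sum over $a',b',p_1,p_2$ of products (layer EGF)$\,\times\,g(\theta)\widehat Q_{a',m',p_2}$.

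One point needs care: the sum over $p_2$ and $a'$ must be finite. Here $p_2\le$ the number of tree nodes, bounded in terms of $m'$ only through parallel pairs. Each parallel pair uses a reticulation, so $p_2\le m'$, and $a'+b'\le m$.

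The main obstacle is that the terms with $m'=m$ require $a'=b'=0$. In that case the layer factor with $a'=0$ pairs with $Q_0$, which is nonzero only when the remainder is empty. So no self-reference occurs except through $Q_0$, and every other term has $m'<m$, which closes the induction.

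Finally, a sum of products in a ring stays in the ring, with rational coefficients throughout, since $T^a/a!$ and all the coefficients involved are rational.
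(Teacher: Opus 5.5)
Your proposal follows the paper's proof essentially step by step: the product rule for the convolution, $T^a/a!$ and its derivatives for the forest numbers, $\theta$ acting as multiplication by $r$, closure of $\mathbb{Q}[u,u^{-1}]$ under $D$ and $\theta$, induction on $m$, and the binomial series for \eqref{eq:uextract}. The one detail the paper includes and you skip is a check that the polynomial extension of the rising factorials in $\Pi$ agrees with the values used in the recursion wherever $Q_{a'}(r,m';p_2)\neq0$. This holds because such a remainder has at least $2p_2$ edges, so every argument $E'-2q-2i$ with $i\le p_2-q$ is nonnegative.
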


\begin{proof}
The first substitution is the product rule for exponential generating functions. For the second, $\sum_nF_{n,a}x^n/n!=T(x)^a/a!$, and $\sum_jF_{j+c,a}x^j/j!$ is its $c$-th derivative. For the third, $\theta$ multiplies the coefficient of $x^r/r!$ by $r$. The edge number of the remainder is $E'=2r+3m'-a'$, so each rising factorial in \eqref{eq:placement} is a polynomial in $r$. Its polynomial extension agrees with the value used in \eqref{eq:placement} whenever the remainder count is nonzero. Indeed, a remainder with $p_2$ parallel pairs has at least $2p_2$ edges, so the argument $E'-2q-2i$ is nonnegative for $i\le p_2-q$. By \eqref{eq:uops} the ring $\mathbb{Q}[u,u^{-1}]$ is closed under $D$ and $\theta$, and $T\in\mathbb{Q}[u]$. Induction on $m$ with Theorem~\ref{thm:recursion} proves the first statement. Finally, $u^n=(1-2x)^{n/2}$ and $[x^\ell](1-2x)^{\alpha}=(-2)^\ell\binom{\alpha}{\ell}$ give \eqref{eq:uextract}.
\end{proof}

\begin{corollary}\label{cor:shape}
Let $f(\ell)=\ell!\,[x^\ell]\,G(x)$ for a Laurent polynomial $G=\sum_ng_nu^n$ with some odd $n$ such that $g_n\ne0$, and let $N$ be the largest such odd $n$. There are polynomials $A,B$ with rational coefficients such that
\[
f(\ell)=A(\ell)\,(2\ell-N-2)!!-B(\ell)\,2^\ell\ell!\qquad\text{for }\ell>\ell_0,
\]
where $\ell_0$ is the larger of $\frac{N-1}{2}$ and half the largest even $n\ge0$ with $g_n\ne0$. For $\ell\le\ell_0$ finitely many corrections from the powers $u^{2s}$ with $s\ge0$ are added.
\end{corollary}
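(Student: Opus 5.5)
The plan is to apply the coefficient extraction formula \eqref{eq:uextract} of Theorem~\ref{thm:closure} term by term and to sort the terms by the parity of the exponent $n$. Write $G=G_{\mathrm{odd}}+G_{\mathrm{even}}$. Then $f(\ell)=\sum_n g_n\,\ell!\,[x^\ell]u^n$ with $\ell!\,[x^\ell]u^n=\prod_{i=0}^{\ell-1}(2i-n)$.

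\emph{Odd exponents.} For odd $n=N-2k$ with $k\ge0$, the product $\prod_{i=0}^{\ell-1}(2i-n)$ consists of the odd integers $-n,2-n,\dots,2\ell-2-n$. Once $\ell>\frac{N-1}{2}\ge\frac{n-1}{2}$, the top factor is positive. The product therefore equals a sign times a finite product of odd numbers, namely those with absolute value at most $|n|$ coming from the negative part, multiplied by $(2\ell-2-n)!!$.

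I then rewrite $(2\ell-2-n)!!=(2\ell-2-N+2k)!!$ as $(2\ell-N-2)!!$ multiplied by the polynomial $\prod_{i=1}^{k}(2\ell-N-2+2i)$ in $\ell$. Summing over $n$ gives $A(\ell)(2\ell-N-2)!!$ with $A\in\mathbb{Q}[\ell]$. The constant prefactors coming from the negative part are rational, and they include the correct sign $(-1)^{(n+1)/2}$ or the appropriate constant when $n<0$.

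\emph{Even exponents.} For even $n=2s$ with $s\ge0$, $u^{2s}$ is a polynomial in $x$ of degree $s$. Its contribution vanishes for $\ell>s$, since the product then contains the factor $i=s$, which is $0$. These are the finitely many corrections for $\ell\le\ell_0$.

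For even $n=-2s<0$, I compute $\prod_{i=0}^{\ell-1}(2i+2s)=2^\ell(\ell+s-1)!/(s-1)!$. This equals $2^\ell\ell!$ multiplied by the polynomial $(\ell+1)\cdots(\ell+s-1)/(s-1)!$. Collecting these terms with a minus sign defines $-B(\ell)\,2^\ell\ell!$.

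The main care point is the threshold. I need $\ell>\frac{N-1}{2}$ so that every odd term is in its stable double-factorial regime; this holds for all odd $n\le N$ simultaneously, because smaller $n$ stabilize earlier. I also need $\ell$ beyond half the largest nonnegative even exponent so that the polynomial terms have died. Both conditions are guaranteed by $\ell>\ell_0$. Finally, I have to track the sign of the negative odd factors carefully when expressing the rational constants, but this affects only the coefficients of $A$ and not its polynomiality.
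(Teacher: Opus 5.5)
Your proposal is correct and follows the paper's proof: you extract coefficients term by term via \eqref{eq:uextract}, reduce each odd power to $(2\ell-N-2)!!$ times a polynomial in $\ell$, rewrite $u^{-2s}$ as $2^\ell\ell!\binom{\ell+s-1}{s-1}$, and note that $u^{2s}$ with $s\ge0$ vanishes for $\ell>s$. One small correction: at $\ell=(N+1)/2$ the top factor for $n=N$ is $-1$, not positive, so your factorisation covers this boundary case only with the convention $(-1)!!=1$, which the paper also adopts.
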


\begin{proof}
By \eqref{eq:uextract}, the odd power $u^{N-2d}$ contributes
\[
g_{N-2d}\prod_{i=0}^{\ell-1}(2i-N+2d)=g_{N-2d}\,\frac{\prod_{j=0}^{d-1}(2\ell+2j-N)}{\prod_{i=0}^{d-1}(2i-N)}\,\prod_{i=0}^{\ell-1}(2i-N).
\]
Here $\prod_{i=0}^{\ell-1}(2i-N)=\kappa\,(2\ell-N-2)!!$ for $\ell\ge\frac{N+1}{2}$, with $\kappa=1/(-N-2)!!$ if $N<0$ and $\kappa=(-1)^{(N+1)/2}N!!$ if $N>0$; we use $(-1)!!=1$. A power $u^{-2s}$ with $s\ge1$ contributes $g_{-2s}\,2^\ell(\ell+s-1)!/(s-1)!=g_{-2s}\,2^\ell\ell!\binom{\ell+s-1}{s-1}$. A power $u^{2s}$ with $s\ge0$ contributes only for $\ell\le s$.
\end{proof}

By Theorem~\ref{thm:closure} and \eqref{eq:WQ}, Corollary~\ref{cor:shape} applies to $W_k$ for every $k$. The finitely many exceptional values of $\ell$ come from the polynomial part of the generating function. This is the origin of the separate statements $|\cP_{1,3}|=9$ and $|\cP_{1,4}|=109$ in \cite{Yu26,YuZhang26}.

\section{Comparison with the published counts for at most three reticulations}\label{sec:verification}

This section compares the weighted counts given by Theorem~\ref{thm:recursion} with the generating functions of \cite{Mansouri22} and, after addition of the defects of Theorem~\ref{thm:defects}, with the published totals. As a worked example, take $W_2(2)=Q_1(2,2;0)$. With $p=0$ only $p_1=0$ and $p_2=q=0$ occur. Three families of terms contribute; all others vanish because they require at least two leaves in a remainder with two roots, or a parallel pair in the remainder that no point breaks.

The terms with $(a',b')=(2,0)$ require $j=0$. The layer factor is $L_1(0,2,0;0)=\tfrac14(t_4-2t_3+t_2)=\tfrac52$, and the remainder is a forest of two trees with labelled roots on two leaves, $Q_2(2,0;0)=2$. With the factor $1/2!$ this family contributes $\tfrac52$.

The terms with $(a',b')=(1,1)$ involve a single tree as remainder. For $j=0$ the layer factor is $L_1(0,1,1;0)=\tfrac12(t_3-t_2)=1$, the remainder count is $t_2=1$, and the placement count is $E'=3$, giving $3$. For $j=1$ one has $\binom21L_1(1,1,1;0)\,t_1\,E'=2\cdot6\cdot1\cdot1=12$. This family contributes $15$.

The terms with $(a',b')=(1,0)$ and one internal reticulation vanish, since $L_1(0,1,0;0)=0$.

Hence $W_2(2)=\tfrac52+15=\tfrac{35}{2}$, and \eqref{eq:split} with $D_2(2)=\tfrac12$ gives $|\cP_{2,2}|=18$. The same hand computation was carried out for the other entries of Table~\ref{tab:low}. In every case the recursion and the generating functions \eqref{eq:M2}--\eqref{eq:M3} give the same weighted count, and adding the defect of Theorem~\ref{thm:defects} reproduces the published totals.

\begin{table}[t]
\centering
\begin{tabular}{cccccc}
\toprule
$k$ & $\ell$ & $W_k(\ell)$ by \eqref{eq:recursion} & $W_k(\ell)$ by \eqref{eq:WfromG} & $D_k(\ell)$ & $|\cP_{\ell,k}|$ \cite{YuZhang26}\\
\midrule
1 & 2 & $2$ & $2$ & $0$ & $2$\\
1 & 3 & $21$ & $21$ & $0$ & $21$\\
2 & 1 & $1$ & $1$ & $0$ & $1$\\
2 & 2 & $35/2$ & $35/2$ & $1/2$ & $18$\\
2 & 3 & $276$ & $276$ & $3$ & $279$\\
3 & 1 & $33/4$ & $33/4$ & $3/4$ & $9$\\
\bottomrule
\end{tabular}
\caption{Weighted counts obtained by hand from the recursion of Theorem~\ref{thm:recursion} and from the generating functions of \cite{Mansouri22}, defects from Theorem~\ref{thm:defects}, and the published totals.}
\label{tab:low}
\end{table}

For $k=3$ and larger $\ell$, Table~\ref{tab:k3} compares three quantities: the weighted count obtained from the recursion, the weighted count obtained from \eqref{eq:WfromG} with the published polynomials \eqref{eq:M3}, and the totals of \cite{Yu26}, which are reproduced in \cite[Table~1]{YuZhang26}. The recursion plus the defect \eqref{eq:D3} reproduces the published totals exactly for every $\ell\le12$. The generating function with the published numerator agrees for $\ell\le2$ only. For $\ell=3,4,5$ the differences are $6$, $216$ and $5940$, and for every $\ell\le12$ they equal
\begin{equation}\label{eq:missing}
\Delta(\ell)=\ell!\,[x^{\ell+2}]\,\frac{x^5}{(1-2x)^{9/2}}.
\end{equation}
The published numerator therefore carries a coefficient error. Replacing $a_3$ by $a_3+x^5(1-2x)$, that is,
\begin{equation}\label{eq:a3corrected}
a_3(x)=x^6+3x^5+4x^4+2x^3+\tfrac{69}{4}x^2,
\end{equation}
makes \eqref{eq:WfromG} coincide with the recursion for every $\ell\le12$. Within the form \eqref{eq:Mform} with the degree bounds $\deg a_3\le6$ and $\deg b_3\le5$ of the published derivation, and with the three vanishing coefficients imposed by $\ell\ge1$, ten coefficients remain free; agreement on twelve consecutive values determines them, and \eqref{eq:a3corrected} is the correct numerator. The published leaf-labelled value $G'_{3,1}=51$ in \cite[Eq.~(30)]{Mansouri22} is inconsistent with both generating functions, which give $W_3(1)=\tfrac{33}{4}$ and hence $|\cP_{1,3}|=9$. The agreement of the recursion with \cite{Yu26} for all $\ell\le12$ is an independent confirmation of the three-reticulation formula, by a method that shares no step with the component-graph derivation.

\begin{table}[t]
\centering
\begin{tabular}{crrrr}
\toprule
$\ell$ & $W_3(\ell)$ by \eqref{eq:recursion} & $W_3(\ell)$ from \eqref{eq:M3} & $D_3(\ell)$ & $|\cP_{\ell,3}|$ \cite{Yu26}\\
\midrule
1 & $33/4$ & $33/4$ & $3/4$ & $9$\\
2 & $879/4$ & $879/4$ & $21/4$ & $225$\\
3 & $19701/4$ & $19677/4$ & $219/4$ & $4980$\\
4 & $438165/4$ & $437301/4$ & $2655/4$ & $110205$\\
5 & $10126695/4$ & $10102935/4$ & $37125/4$ & $2540955$\\
\bottomrule
\end{tabular}
\caption{Three reticulations. The recursion plus the defect reproduces the published totals; the generating function with the published numerator falls short by $\Delta(\ell)$ of \eqref{eq:missing}. The agreement extends to $\ell\le12$.}
\label{tab:k3}
\end{table}

\section{Asymptotic enumeration for every number of reticulations}\label{sec:asympt}

This section derives statements for every number of reticulations, uniform in $k$ over explicit ranges. Write $TC_{\ell,k}$ for the number of tree-child networks in $\cP_{\ell,k}$ and $c_{n,k}$ for the word counts of \cite{PonsBatle21}. By the Pons--Batle identity, proved in \cite{Lin26},
\begin{equation}\label{eq:PB}
TC_{\ell,k}=\frac{\ell!}{(\ell-k)!}\,c_{\ell-1,k},\qquad c_{n,k}=c_{n,k-1}+(2n+k-1)\,c_{n-1,k},\qquad c_{n,0}=(2n-1)!!,
\end{equation}
with $c_{n,k}=0$ for $k>n$. We set $a_k=2^{k-1}/k!$. The letter $C$ denotes absolute constants, which may change from line to line.

\subsection{Structures, insertion and removal}

\begin{definition}\label{def:Pstar}
Let $\cP^*_{\ell,j}$ be the set of structures satisfying Definition~\ref{def:network} with $j$ reticulations, except that parallel pairs are allowed: a tree node may send both of its out-edges to the same reticulation. Such a structure has $E_j=2\ell+3j-1$ edges. A violation is a non-leaf vertex all of whose children are reticulations; the tail of a parallel pair is a violation. The tree-child networks are the elements of $\cP^*_{\ell,j}$ without violations.
\end{definition}

For edges $e_t=(a,b)$ and $e_h=(c,d)$ of $S\in\cP^*_{\ell,j-1}$ with no directed path from $d$ to $a$, the insertion $I(S,e_t,e_h)$ subdivides $e_t$ by a new tree node $t$, subdivides $e_h$ by a new reticulation $h$, and adds the edge $(t,h)$. When $e_t=e_h$ the node $t$ is placed above $h$, which creates a parallel pair. Conversely, for an edge $f=(t,h)$ of $S\in\cP^*_{\ell,j}$ with $t$ a tree node and $h$ a reticulation, the removal $R(S,f)$ deletes $f$ and suppresses $t$ and $h$, which now have in-degree and out-degree one.

\begin{lemma}\label{lem:removal}
Let $S\in\cP^*_{\ell,j}$ with $j\ge1$, and let $h$ be a reticulation that is first in a topological order among the reticulations. Then both parents of $h$ are tree nodes. For each in-edge $f$ of $h$, $R(S,f)\in\cP^*_{\ell,j-1}$ and $S=I(R(S,f),e_t,e_h)$ for edges $e_t,e_h$ of $R(S,f)$.
\end{lemma}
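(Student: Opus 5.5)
We argue directly from the definitions of $\cP^*_{\ell,j}$ and of the insertion and removal operations, in three steps: the parents of $h$, the validity of $R(S,f)$, and the recovery of $S$ by an insertion.

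\emph{Parents of $h$.} Let $g$ be a parent of $h$. It is not a leaf and not the root: the root's only child would then be $h$, so $h$ would have a single parent, since every vertex is reachable from the root and acyclicity forbids a second path back into the top. If $g$ were a reticulation, it would precede $h$ in every topological order, contradicting the choice of $h$. Hence both parents are tree nodes. The two in-edges may coincide as a parallel pair $g\to h$, $g\to h$. In that case $g$ is the unique parent and is a tree node, and the statement is read with $f$ either copy.

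\emph{Removal.} Fix an in-edge $f=(t,h)$, with $t$ a tree node. Let $t$ have parent $a$ and other child $b$, and let $h$ have other parent $c$ and child $d$. Deleting $f$ leaves $t$ and $h$ with in- and out-degree one, and suppressing them produces edges $(a,b)$ and $(c,d)$. Degrees elsewhere are unchanged, leaves stay labelled, and the root keeps out-degree $1$. Acyclicity is preserved because suppression only shortens paths. Parallel edges can only arise as parallel pairs into a reticulation, which $\cP^*$ allows. The one dangerous case is when $(a,b)$ or $(c,d)$ becomes a loop or a parallel pair into a non-reticulation. A new parallel pair $a\to b$ twice needs $b$ to be a reticulation, which is allowed. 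For the degenerate parallel-pair case $t=c$, the suppression of $t$ and $h$ merges into a single edge from the parent of $t$ to $d$. Count reticulations: one fewer, so the result lies in $\cP^*_{\ell,j-1}$.

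\emph{Recovery.} Take $e_t=(a,b)$ and $e_h=(c,d)$ in $R(S,f)$; in the degenerate case both are the merged edge, with $t$ above $h$, matching the convention for $e_t=e_h$. Inserting a tree node on $e_t$, a reticulation on $e_h$, and the edge $(t,h)$ reproduces $S$ by construction. It remains to check the insertion's precondition that there is no directed path from $d$ to $a$ in $R(S,f)$. Such a path would lift to a path in $S$ from $d$ to $a$, possibly passing through the suppressed vertices, which only lengthens it. Together with $a\to t\to h\to d$ this gives a directed cycle in $S$, which is impossible.

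The main obstacle is bookkeeping in the degenerate cases: parallel pairs at $h$, and $b=h$ or $a=c$, where the two subdivided edges interact. I would handle these by listing them explicitly and checking that the insertion convention (when $e_t=e_h$, place $t$ above $h$) reproduces exactly these configurations.
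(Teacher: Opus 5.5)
Your proposal is correct and follows essentially the same route as the paper: minimality of $h$ together with the root's out-degree shows that both parents are tree nodes, degree preservation shows that any new double edge is an admissible parallel pair, and reinserting at the edges created by the two suppressions restores $S$. You are more explicit than the paper on two points it leaves implicit: the case where the two in-edges of $h$ form a parallel pair (the suppressions merge into one edge, so $e_t=e_h$), and the check that $R(S,f)$ has no directed path from $d$ to $a$, which the insertion requires.
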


\begin{proof}
A parent of $h$ lies above $h$ and is not a reticulation, by the choice of $h$. It is not the root either, because the root has a single child, which would then be $h$, and the second parent of $h$ would lie below $h$. After the deletion of $f$, the vertices $t$ and $h$ have in-degree and out-degree one. Suppressing $t$ joins its parent $a$ to its other child $b$. If $(a,b)$ already exists, $b$ has two edges from $a$; then $b$ is a reticulation and $a$, which had the two children $t$ and $b$, is a tree node, so the new double edge is a parallel pair. The same holds for the suppression of $h$. Inserting at the two edges created by the suppressions restores $S$.
\end{proof}

\subsection{Tree-child counts}

\begin{lemma}\label{lem:exactTC}
Define $\alpha_0=1$ and, for $k\ge1$, $\alpha_k$ by
\begin{equation}\label{eq:alphadef}
\sum_{i=0}^{k}\binom{k}{i}\alpha_i\,(4k-i-3)!!=0 .
\end{equation}
Then, for all $k\ge0$ and $n\ge k-1$,
\begin{equation}\label{eq:cexact}
c_{n,k}=\frac{1}{k!}\sum_{i=0}^{k}\binom{k}{i}\alpha_i\,(2n+2k-i-1)!!,
\end{equation}
and hence, for $\ell\ge k$,
\begin{equation}\label{eq:TCexact}
TC_{\ell,k}=\binom{\ell}{k}\sum_{i=0}^{k}\binom{k}{i}\alpha_i\,(2\ell+2k-i-3)!! .
\end{equation}
The first values are $\alpha_0,\dots,\alpha_4=1,-1,\frac13,\frac{17}{8},-\frac{283}{63}$.
\end{lemma}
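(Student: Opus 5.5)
We set $d_{n,k}$ equal to the right-hand side of \eqref{eq:cexact} and prove $c_{n,k}=d_{n,k}$ by showing that $d$ satisfies the same recursion \eqref{eq:PB} and the same boundary data on the region $n\ge k-1$. The proof then proceeds by induction on $k$, and for fixed $k$ by induction on $n$ starting at $n=k-1$.

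\emph{Boundary data.} For $k=0$ the sum has the single term $i=0$ with $\alpha_0=1$, so $d_{n,0}=(2n-1)!!=c_{n,0}$. For $k\ge1$ and $n=k-1$ we have $2n+2k-i-1=4k-i-3$. The defining relation \eqref{eq:alphadef} therefore says exactly that $d_{k-1,k}=0$, which is the value $c_{k-1,k}=0$ forced by $c_{n,k}=0$ for $k>n$. This is in fact the reason for the definition of the $\alpha_i$.

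\emph{Recursion.} For $k\ge1$ and $n\ge k$ we check that $d_{n,k}=d_{n,k-1}+(2n+k-1)\,d_{n-1,k}$. Multiply through by $k!$ and write $k!\,d_{n,k-1}=k\sum_i\binom{k-1}{i}\alpha_i(2n+2k-i-3)!!$. Every term then carries the common factor $\alpha_i(2n+2k-i-3)!!$. On the left this factor is multiplied by $\binom{k}{i}(2n+2k-i-1)$, using $(2n+2k-i-1)!!=(2n+2k-i-1)(2n+2k-i-3)!!$. On the right it is multiplied by $k\binom{k-1}{i}+(2n+k-1)\binom{k}{i}$. The difference of the two coefficients is
\[
\binom{k}{i}(k-i)-k\binom{k-1}{i},
\]
which is $0$ by the absorption identity $k\binom{k-1}{i}=(k-i)\binom{k}{i}$; this also covers $i=k$, where $\binom{k-1}{k}=0$. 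So the identity holds term by term in $i$, and no property of the $\alpha_i$ is needed here. The double factorials that occur have arguments at least $2n-3\ge 2k-3\ge-1$, so the convention $(-1)!!=1$ suffices.

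\emph{Induction.} Assume $c_{\cdot,k-1}=d_{\cdot,k-1}$ for all $n\ge k-2$. For fixed $k$, the case $n=k-1$ is the boundary case above. For $n\ge k$, the recursion \eqref{eq:PB} expresses $c_{n,k}$ through $c_{n,k-1}$, which is covered by the induction on $k$, and $c_{n-1,k}$, which is covered by the induction on $n$ since $n-1\ge k-1$. The matching recursion for $d$ then gives $c_{n,k}=d_{n,k}$. The step I would check most carefully is the use of the recursion at $n=k$: it must be read with $c_{k-1,k}=0$, and the form in which \eqref{eq:PB} is stated does allow this.

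\emph{Conclusion.} Equation \eqref{eq:TCexact} follows by substituting $n=\ell-1$ (valid since $\ell\ge k$) into \eqref{eq:PB} and using $\frac{\ell!}{(\ell-k)!\,k!}=\binom{\ell}{k}$. The listed values $\alpha_1,\dots,\alpha_4$ follow by solving \eqref{eq:alphadef} successively for $k=1,\dots,4$. This is legitimate because the coefficient of $\alpha_k$ in that relation is $(3k-3)!!\ne0$, so each $\alpha_k$ is determined by the earlier ones.
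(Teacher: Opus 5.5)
Your proof is correct and follows the paper's argument essentially step for step: the term-by-term check of the recursion via $\binom{k}{i}(k-i)=k\binom{k-1}{i}$, the boundary values $d_{n,0}=(2n-1)!!$ and $d_{k-1,k}=0$ coming from \eqref{eq:alphadef}, and the double induction on $k$ and then on $n\ge k-1$. Your remark that the coefficient of $\alpha_k$ in \eqref{eq:alphadef} is $(3k-3)!!\neq0$, which makes the $\alpha_k$ well defined, is a small point the paper leaves implicit.
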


\begin{proof}
Write $\hat c_{n,k}$ for the right-hand side of \eqref{eq:cexact}. For each term,
\[
(2n+2k-i-1)!!-(2n+k-1)(2n+2k-i-3)!!=(k-i)\,(2n+2k-i-3)!!,
\]
and $\binom{k}{i}(k-i)/k!=\binom{k-1}{i}/(k-1)!$. Hence $\hat c_{n,k}-(2n+k-1)\hat c_{n-1,k}=\hat c_{n,k-1}$ for all $n$, with the same constants $\alpha_i$. Moreover, $\hat c_{n,0}=(2n-1)!!$, and $\hat c_{k-1,k}=0$ by \eqref{eq:alphadef}. The numbers $c_{n,k}$ satisfy the same recursion for $n\ge k$, with $c_{n,0}=(2n-1)!!$ and $c_{k-1,k}=0$. Induction on $k$, and for fixed $k$ on $n\ge k-1$, gives $c_{n,k}=\hat c_{n,k}$. Equation \eqref{eq:TCexact} follows from \eqref{eq:PB} with $n=\ell-1$. The values of $\alpha_i$ follow from \eqref{eq:alphadef}; for instance $\alpha_3$ solves $945-1152+105+48\alpha_3=0$.
\end{proof}

Formula \eqref{eq:TCexact} is equivalent to the semi-closed form of Lin et al.\ \cite{Lin26}. Its truncations are the approximants of Yu and Zhang \cite[App.~A]{YZasym}, which are exact for $k\le4$; the constants $\alpha_i$ do not depend on $k$. The terms with even $i$ are $(2\ell-1)!!$ times a polynomial in $\ell$, and those with odd $i$ are $2^{\ell}\ell!$ times a polynomial. This gives the two-sector form of the tree-child count for every $k$ without appeal to generating functions.

\begin{corollary}\label{cor:sectors}
For $k\ge1$ and $\ell\ge k$, $TC_{\ell,k}=\ell^{\underline k}\bigl[(2\ell-1)!!\,p_k(\ell-1)-2^{\ell-1}(\ell-1)!\,q_k(\ell-1)\bigr]$, with polynomials
\[
p_k(n)=a_k\Bigl(n^{k-1}+\tfrac{(k-1)(7k+6)}{12}\,n^{k-2}+O(n^{k-3})\Bigr),\qquad q_k(n)=k\,a_k\,n^{k-1}+O(n^{k-2}).
\]
\end{corollary}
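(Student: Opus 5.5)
The plan is to start from the exact formula \eqref{eq:TCexact} of Lemma~\ref{lem:exactTC} and split its sum by the parity of $i$. First, $\binom{\ell}{k}\binom{k}{i}=\ell^{\underline k}/(i!\,(k-i)!)$, which produces the factor $\ell^{\underline k}$ for every term.

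For even $i=2r$ we have $(2\ell+2k-2r-3)!!=(2\ell-1)!!\prod_{s=1}^{k-r-1}(2\ell-1+2s)$. This is a polynomial in $\ell$ of degree $k-r-1$, and writing $n=\ell-1$ turns it into a polynomial in $n$. For odd $i=2r+1$ we have $(2\ell+2k-2r-4)!!=2^{\ell+k-r-2}(\ell+k-r-2)!$. This equals $2^{\ell-1}(\ell-1)!$ times $2^{k-r-1}\prod_{s=0}^{k-r-2}(\ell+s)$, a polynomial of degree $k-r-1$. Collecting terms, $p_k(n)$ is the sum over even $i$ and $q_k(n)$ is minus the sum over odd $i$, each term carrying its coefficient $\alpha_i/(i!(k-i)!)$. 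Both are polynomials, so the two-sector form follows at once.

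The leading coefficients come from the first few terms. The leading term of $p_k$ comes from $i=0$: it is $\frac{1}{k!}\prod_{s=1}^{k-1}(2n+1+2s)=\frac{2^{k-1}}{k!}\prod_{s=1}^{k-1}(n+s+\tfrac12)$. Its top coefficient is $a_k$. Its next coefficient is $a_k\sum_{s=1}^{k-1}(s+\tfrac12)=a_k\bigl(\tfrac{k(k-1)}2+\tfrac{k-1}2\bigr)=a_k\tfrac{(k-1)(k+1)}{2}$.

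The $i=2$ term contributes at degree $k-2$. Its coefficient is $\alpha_2/(2!(k-2)!)\cdot 2^{k-2}$, and with $\alpha_2=\tfrac13$ this equals $a_k\cdot\tfrac{k(k-1)}{12}$. Adding the two, $\tfrac{(k-1)(k+1)}2+\tfrac{k(k-1)}{12}=\tfrac{(k-1)(6k+6+k)}{12}=\tfrac{(k-1)(7k+6)}{12}$, as claimed.

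For $q_k$, the leading term comes from $i=1$, with $\alpha_1=-1$. It is $+\frac{1}{(k-1)!}2^{k-1}n^{k-1}+\dots$, since $\prod_{s=0}^{k-2}(n+1+s)=n^{k-1}+\dots$. This equals $k\,a_k n^{k-1}$, and the sign matches the minus sign in front of $q_k$. All higher odd $i$ have degree at most $k-2$.

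The main points to watch are bookkeeping rather than difficulty. The first is checking the double-factorial conversion for odd $i$ when $\ell+k-r-2\ge \ell-1$, which holds since $r\le (k-1)/2$. The second is the range condition $\ell\ge k$, inherited from Lemma~\ref{lem:exactTC}. The third is that no even $i\ge4$ reaches degree $k-2$, which is immediate since such a term has degree $k-i/2-1\le k-3$.
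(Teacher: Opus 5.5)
Your proposal is correct and matches the paper's proof. Like the paper, you split \eqref{eq:TCexact} by the parity of $i$, read the two leading coefficients of $p_k$ off the terms $i=0$ and $i=2$ (with $\alpha_2=\tfrac13$), and the leading coefficient of $q_k$ off the term $i=1$ (with $\alpha_1=-1$); your explicit double-factorial conversions just make precise the paper's remark that even-$i$ terms are $(2\ell-1)!!$ times a polynomial and odd-$i$ terms are $2^{\ell-1}(\ell-1)!$ times a polynomial.
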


\begin{proof}
The term $i=0$ of \eqref{eq:cexact} is $\frac1{k!}(2n+1)!!\prod_{j=1}^{k-1}(2n+2j+1)$, whose two leading coefficients in $n$ are $2^{k-1}/k!$ and $2^{k-2}\sum_{j=1}^{k-1}(2j+1)/k!=2^{k-2}(k-1)(k+1)/k!$. The term $i=2$ contributes $\frac{1}{k!}\binom k2\frac13(2n+1)!!\prod_{j=1}^{k-2}(2n+2j+1)$, of leading coefficient $2^{k-2}k(k-1)/(6\,k!)$ in degree $k-2$. Adding, the second coefficient of $p_k$ is $\frac{2^{k-2}}{k!}(k-1)\bigl(k+1+\frac k6\bigr)=a_k\frac{(k-1)(7k+6)}{12}$. The leading term of $q_k$ comes from $i=1$, namely $\frac{k}{k!}(2n)!!\prod_{j=1}^{k-1}(2n+2j)$.
\end{proof}

With $(2\ell-1)!!=\sqrt2(2\ell/e)^\ell(1-\frac1{24\ell}+O(\ell^{-2}))$ and $2^\ell\ell!/(2\ell-1)!!=\sqrt{\pi\ell}(1+O(\ell^{-1}))$, Corollary~\ref{cor:sectors} gives, for fixed $k$,
\begin{equation}\label{eq:TCexp}
TC_{\ell,k}=\frac{2^{k-1}\sqrt2}{k!}\Bigl(\frac2e\Bigr)^{\ell}\ell^{\ell+2k-1}\Bigl(1-\frac{k\sqrt\pi}{2}\ell^{-1/2}+\frac{2k^2-14k+11}{24}\ell^{-1}+O(\ell^{-3/2})\Bigr).
\end{equation}
Yu and Zhang \cite[Prop.~4.1]{YZasym} prove the equivalent expansion uniformly for $k=o(\sqrt\ell)$ and to higher order. The first-order constant is due to Fuchs, Huang and Yu \cite{FHY}. The uniform statements of this section use two estimates of Yu and Zhang \cite[Prop.~B1 and Eq.~(E15)]{YZasym}, which are its only external input. For fixed $k$ both estimates follow from Corollary~\ref{cor:sectors}, so the fixed-$k$ statements are self-contained. The first estimate is that, uniformly for $1\le j=o(\sqrt\ell)$,
\begin{equation}\label{eq:TCratio}
\frac{TC_{\ell,j-1}}{TC_{\ell,j}}=\frac{j}{2\ell(\ell-j+1)}\bigl(1+O(\ell^{-1/2})\bigr).
\end{equation}
The second is that there is $c_0>0$ such that $TC_{\ell,j-1}/TC_{\ell,j}\le Cj/\ell^2$ for $1\le j\le c_0\sqrt\ell$.

\subsection{Violations and the comparison with tree-child networks}

\begin{lemma}\label{lem:pinning}
Let $S\in\cP^*_{\ell,j}$ have a violation, and let $v$ be its first violation in a topological order. There is an edge $f$ at $v$ such that $S=I(R(S,f),e_t,e_h)$, where one of $e_t,e_h$ is an in-edge of a reticulation of $R(S,f)$ or $e_t=e_h$. We call this position pinned; given $R(S,f)$, there are at most $2j$ choices for it, or the two positions coincide.
\end{lemma}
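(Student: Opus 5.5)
Let $v$ be the first violation of $S$ in a topological order. I split into cases according to the children of $v$ and choose the edge $f$ to remove at $v$. The aim is to make the reinsertion, which is the inverse of the removal $R(S,f)$, land at a position that is tightly constrained in terms of $R(S,f)$. This constraint is the pinning.

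First, suppose $v$ is a tree node and both of its children are reticulations $h_1,h_2$. If $h_1=h_2$, then $v$ is the tail of a parallel pair. Take $f=(v,h_1)$. By Lemma~\ref{lem:removal}, or directly by the suppression argument in its proof, $R(S,f)$ lies in $\cP^*_{\ell,j-1}$. The two suppressions of $v$ and $h_1$ create a single edge, so $e_t=e_h$ and the two positions coincide. If $h_1\ne h_2$, take $f=(v,h_1)$. Suppressing $v$ merges its in-edge with the edge $(v,h_2)$, so the new edge $e_t$ ends at $h_2$. The vertex $h_2$ is still a reticulation of $R(S,f)$, because $h_2\ne h_1$. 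Hence $e_t$ is an in-edge of a reticulation of $R(S,f)$, and that structure has exactly $2(j-1)\le 2j$ such in-edges. This is the pinned position, and the other endpoint $e_h$ is free.

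Second, suppose $v$ is a reticulation whose child $c$ is a reticulation. Take $f$ to be an in-edge of $v$, say $f=(t,v)$. I must check that $t$ is a tree node; otherwise $R(S,f)$ is not defined. Here I use minimality. If $t$ were a reticulation, its only child would be the reticulation $v$, so $t$ would itself be a violation earlier than $v$, which is a contradiction. If $t$ were the root, then its only child $v$ is a reticulation, so the root is a violation earlier than $v$, again a contradiction. So $t$ is a tree node. Suppressing $v$ merges its remaining in-edge with $(v,c)$, so $e_h$ is an in-edge of the reticulation $c$ of $R(S,f)$. Again this gives at most $2(j-1)$ choices, or $e_t=e_h$ if $t$'s other child merges into the same edge.

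The case where $v$ is the root with a reticulation child is handled in the same way as the previous case. Remove an in-edge of that child $h$. Minimality makes its parent a tree node, since the root has no earlier vertices; alternatively I reduce to Lemma~\ref{lem:removal}. Then $e_h$ becomes the edge from the root, which is not an in-edge of a reticulation. So I will have to show that this configuration forces $h$'s child to be a reticulation, or else that it is counted as the unique position adjacent to the root, which is at most one choice and is absorbed into the bound.

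In each case I also verify the acyclicity condition and that parallel pairs created by suppression are legal, by repeating the argument of Lemma~\ref{lem:removal}. The main obstacle is this well-definedness of $R(S,f)$, meaning that the removed edge goes from a tree node to a reticulation and the result stays in $\cP^*$. The minimal-violation choice is what resolves it.
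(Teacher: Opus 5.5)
Your two substantive cases are the paper's argument. For a tree-node violation you remove $(v,h_1)$, and the suppression of $v$ leaves the edge from the parent of $v$ into $h_2$, or the single edge of the former parallel pair when $h_1=h_2$. For a reticulation violation you remove an in-edge of $v$, use minimality to see that its tail is a tree node, and read off the head position as an in-edge of $c$.

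The gap is your third case, $v$ the root with a reticulation child. You leave it open, and neither way you suggest for closing it proves the lemma as stated. Showing that the child of $h$ must be a reticulation is not a property you can derive for this configuration. Declaring the edge out of the root a pinned position changes the statement, which requires an in-edge of a reticulation or $e_t=e_h$. Your sentence that minimality makes the parent of $h$ a tree node is also muddled, since one parent of $h$ is the root itself.

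The missing observation is that this case cannot occur. Every vertex of $S$ other than the root has positive in-degree, so in the finite acyclic graph $S$ the root is the unique source and every vertex is reachable from it. Suppose the single child $h$ of the root were a reticulation. Its second parent $x$ could then be reached from the root only through $h$. So $x$ would lie below $h$, and the edge $(x,h)$ would close a directed cycle. This is exactly the argument in the proof of Lemma~\ref{lem:removal}. Hence the root is never a violation, and your first two cases cover everything; the paper's proof omits the root for this reason.

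Two smaller points. When you justify $R(S,f)\in\cP^*_{\ell,j-1}$, cite the suppression argument, not Lemma~\ref{lem:removal} itself. That lemma is stated only for the topologically first reticulation, and your $h_1$ need not be first. The suppression argument uses only that $f$ goes from a tree node to a reticulation. Also, the disjunct $e_t=e_h$ in your second case never arises: minimality forces the other child of $t$ to be a non-reticulation.
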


\begin{proof}
If $v$ is a tree node with reticulation children $r_1,r_2$, possibly equal, take $f=(v,r_1)$. After the suppression of $v$, the tail position is the edge from the parent of $v$ to $r_2$, an in-edge of a reticulation, or the edge of the parallel pair when $r_1=r_2$. If $v$ is a reticulation with reticulation child $r'$, its parents are tree nodes, since a reticulation parent would be an earlier violation. Take $f$ to be an in-edge of $v$. After the suppression of $v$, the head position is an in-edge of $r'$.
\end{proof}

\begin{lemma}\label{lem:nontc}
There is $c_1>0$ such that, uniformly for $1\le j\le c_1\sqrt\ell$,
\[
|\cP^*_{\ell,j}|\le\Bigl(1+\frac{Cj^2}{\ell}\Bigr)TC_{\ell,j}.
\]
In particular $|\cP_{\ell,j}|=TC_{\ell,j}(1+O(j^2/\ell))$ and $|\cP^*_{\ell,j}|=|\cP_{\ell,j}|(1+O(j^2/\ell))$ in this range.
\end{lemma}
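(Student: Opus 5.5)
We bound the number $V_{\ell,j}$ of structures in $\cP^*_{\ell,j}$ having at least one violation, since $|\cP^*_{\ell,j}|=TC_{\ell,j}+V_{\ell,j}$. The estimate is an induction on $j$, driven by Lemma~\ref{lem:pinning}, which charges every non-tree-child structure to a structure with one reticulation fewer.

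\emph{Charging step.} Fix $S\in\cP^*_{\ell,j}$ with a violation. Lemma~\ref{lem:pinning} gives $S=I(S',e_t,e_h)$ with $S'=R(S,f)\in\cP^*_{\ell,j-1}$ and a pinned position. The number of pinned insertions into a fixed $S'$ is at most
\[
2\cdot 2(j-1)\cdot E_{j-1}+E_{j-1}\le C\,j\,\ell .
\]
Here one of the two edges is one of the $2(j-1)$ reticulation in-edges of $S'$, the other is one of the $E_{j-1}=2\ell+3j-4$ edges, and the extra term $E_{j-1}$ covers $e_t=e_h$. Since the map $S\mapsto(S',e_t,e_h)$ is injective, this gives
\[
V_{\ell,j}\le Cj\ell\,|\cP^*_{\ell,j-1}|.
\]

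\emph{Induction.} Write $|\cP^*_{\ell,j-1}|\le(1+\varepsilon_{j-1})TC_{\ell,j-1}$. By the second Yu--Zhang estimate, $TC_{\ell,j-1}\le Cj\,TC_{\ell,j}/\ell^2$. Combining,
\[
\frac{V_{\ell,j}}{TC_{\ell,j}}\le \frac{C\,j^2}{\ell}\,(1+\varepsilon_{j-1}),
\]
so that
\[
\varepsilon_j\le \frac{Cj^2}{\ell}\,(1+\varepsilon_{j-1}).
\]
The constant $C$ in this recursion is absolute and does not change with $j$. If $j\le c_1\sqrt\ell$ with $c_1$ small, then $Cj^2/\ell\le\frac12$, and the induction hypothesis $\varepsilon_{j-1}\le 1$ gives $\varepsilon_j\le 2Cj^2/\ell$ (and so also $\varepsilon_j\le 1$). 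The base case is $\varepsilon_0=0$, since trees have no violations. We take $c_1\le c_0$ so that the estimate on $TC_{\ell,j-1}/TC_{\ell,j}$ is available throughout.

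\emph{Consequences.} Tree-child networks have no parallel pairs, so
\[
TC_{\ell,j}\le|\cP_{\ell,j}|\le|\cP^*_{\ell,j}|.
\]
Combined with the bound just proved, this gives $|\cP_{\ell,j}|=TC_{\ell,j}(1+O(j^2/\ell))$ and $|\cP^*_{\ell,j}|=|\cP_{\ell,j}|(1+O(j^2/\ell))$.

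\emph{Main obstacle.} The delicate point is the charging count: the number of pinned insertions must be $O(j\ell)$ and not $O(\ell^2)$. Lemma~\ref{lem:pinning} provides this only if, for each violation type, we check which of $e_t,e_h$ is pinned. For a tree-node violation the tail is pinned, on an in-edge of $r_2$; for a reticulation violation the head is pinned, on an in-edge of $r'$. The remaining coordinate is free among $E_{j-1}=O(\ell)$ edges.

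We also need the uniformity of the constant: the factor $1+\varepsilon_{j-1}$ is handled by the bootstrap $\varepsilon_{j-1}\le 1$, so no constant accumulates over the induction.
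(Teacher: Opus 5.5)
Your proof is correct and follows the paper's argument. Each non-tree-child structure is charged via Lemma~\ref{lem:pinning} to an element of $\cP^*_{\ell,j-1}$ with $O(j\ell)$ pinned insertions, the second Yu--Zhang ratio bound converts this into a factor $Cj^2/\ell$, and the induction is closed by a bootstrap on the bounded ratio. The paper bootstraps with $A_j\le\frac32$ where you use $\varepsilon_j\le1$. Your count of pinned insertions, which separates tail-pinned from head-pinned positions and uses the $2(j-1)$ reticulation in-edges of $R(S,f)$, is slightly more careful than the paper's $2jE_{j-1}$, and the difference is absorbed in the constant.
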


\begin{proof}
Set $A_j=|\cP^*_{\ell,j}|/TC_{\ell,j}$, so $A_0=1$. By Lemma~\ref{lem:pinning}, every structure with a violation is determined by an element of $\cP^*_{\ell,j-1}$, a pinned position and a second position. Hence
\[
A_j\le1+2j\,E_{j-1}\,A_{j-1}\,\frac{TC_{\ell,j-1}}{TC_{\ell,j}}\le1+\frac{C_*j^2}{\ell}A_{j-1},
\]
using the second estimate following \eqref{eq:TCratio}. If $C_*j^2/\ell\le\frac13$ and $A_{j-1}\le\frac32$, then $A_j\le\frac32$. Induction gives $A_j\le\frac32$ throughout the range, and then $A_j\le1+\frac32C_*j^2/\ell$.
\end{proof}

Lemma~\ref{lem:nontc} recovers, with an explicit error, the first-order asymptotic equivalence of $|\cP_{\ell,k}|$ and $TC_{\ell,k}$ for $k=o(\sqrt\ell)$ proved in \cite[Thm.~4.2]{YZasym}, and the fixed-$k$ comparison of \cite[Prop.~4.8]{Mansouri22}. Related first-order results are the asymptotics of normal and hybridization networks for $k=o(\ell^{1/3})$ obtained from decorated trees by Fuchs, Steel and Zhang \cite{FSZ}, and the equivalence with galled tree-child networks by Chang, Fuchs and Yu \cite{CFY}.

\subsection{An insertion identity}

An edge $(p,q)$ of a tree-child network is free if $p$ is a tree node both of whose children are tree nodes or leaves. A network of $TC_{\ell,j-1}$ has $2(\ell-j)$ free edges and $2\ell+j-2$ tree edges \cite[Prop.~5.1]{YZasym}. For a free edge $e=(p,q)$ let $\mathrm{low}(e)$ be the number of tree edges below $q$.

\begin{lemma}\label{lem:identity}
Every $N\in TC_{\ell,j}$ arises in exactly $2j$ ways as $I(N'',e_t,e)$ with $N''\in TC_{\ell,j-1}$, $e$ a free edge, and $e_t\ne e$ a tree edge not below $e$. Consequently \cite[Prop.~5.3]{YZasym}
\begin{equation}\label{eq:lowsum}
\sum_{N''\in TC_{\ell,j-1}}\sum_{e\ \mathrm{free}}\mathrm{low}(e)=2(\ell-j)(2\ell+j-3)\,TC_{\ell,j-1}-2j\,TC_{\ell,j},
\end{equation}
and, uniformly for $1\le j=o(\sqrt\ell)$, this sum is $O\bigl((\ell^{3/2}+j\ell)\,TC_{\ell,j-1}\bigr)$.
\end{lemma}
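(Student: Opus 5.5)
The plan is to prove the bijective statement first and then obtain the identity \eqref{eq:lowsum} by double counting.

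\emph{The $2j$ decompositions.} Fix $N\in TC_{\ell,j}$ and a reticulation $h$ with in-edge $f=(t,h)$. Since $N$ is tree-child, $t$ is a tree node whose other child $b$ is not a reticulation, and the child $d$ of $h$ is not a reticulation. The removal $R(N,f)$ suppresses $t$ and $h$ and creates the edges $e_t=(a,b)$ and $e=(c,d)$. I will check three things.
\begin{itemize}
\item $N''=R(N,f)$ is tree-child. Suppressing $t$ gives $a$ the child $b$, which is a non-reticulation. Suppressing $h$ leaves $c$ with the child $d$ in place of $h$, which can only remove a violation.
\item The edge $e$ is free in $N''$. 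Its head $d$ is not a reticulation. The vertex $c$ was a tree node of $N$ (its other child is not a reticulation in the tree-child $N$), and after the removal both children of $c$ are non-reticulations. The case $c=t$ cannot occur, since the parents of $h$ are distinct.
\item The edge $e_t$ is a tree edge with $e_t\ne e$, and $e_t$ is not below $e$, because a path from $d$ to $a$ would create a cycle through $t\to h\to d$.
\end{itemize}
Conversely, given such $(N'',e_t,e)$, the insertion $I(N'',e_t,e)$ is tree-child. The new $t$ has the tree child $b$, and the new $h$ has the non-reticulation child $d$. The parent $c$ of $h$ keeps its other child, which is a non-reticulation because $e$ is free. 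The acyclicity condition of the insertion holds because $e_t$ is not below $e$.

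Removal and insertion are mutually inverse, so the decompositions of $N$ correspond exactly to the $2j$ in-edges of its reticulations. Distinct in-edges give distinct triples, since the triple determines $f$ as the inserted edge.

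\emph{Counting the triples.} For each $N''\in TC_{\ell,j-1}$ and each free edge $e$, the admissible $e_t$ are the tree edges other than $e$ and not below the head $q$ of $e$. Their number is $(2\ell+j-2)-1-\mathrm{low}(e)$.

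One point needs care. Could $e_t$ itself be a tree edge below $q$ in a way that leaves insertion acyclic but still counts as "below $e$"? I take "not below $e$" to mean not among the $\mathrm{low}(e)$ edges, which is the set that must be excluded. I will also check that non-tree edges (in-edges of reticulations) are excluded as $e_t$: inserting $t$ there would give $t$ a reticulation child as its only other child besides $h$, which is a violation.

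Summing over the triples gives
\[
2j\,TC_{\ell,j}=\sum_{N''}\sum_{e}\bigl(2\ell+j-3-\mathrm{low}(e)\bigr).
\]
Using the $2(\ell-j)$ free edges per network and rearranging yields \eqref{eq:lowsum}.

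\emph{The asymptotic bound.} This is the main obstacle, because it needs cancellation between the two terms of \eqref{eq:lowsum}. I will substitute \eqref{eq:TCratio} in the form
\[
TC_{\ell,j}=\frac{2\ell(\ell-j+1)}{j}\,TC_{\ell,j-1}\bigl(1+O(\ell^{-1/2})\bigr).
\]
Then $2j\,TC_{\ell,j}=4\ell(\ell-j+1)\bigl(1+O(\ell^{-1/2})\bigr)TC_{\ell,j-1}$. Comparing with $2(\ell-j)(2\ell+j-3)=4\ell^2-2j\ell+O(j^2+\ell)$, the leading $4\ell^2$ terms cancel. What remains is $O(j\ell+\ell^{3/2})$, the error $O(\ell^{-1/2})\cdot\ell^2$ supplying the $\ell^{3/2}$ and the linear-in-$j$ terms supplying $j\ell$. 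The $j^2$ term is $o(j\ell)$ in the range $j=o(\sqrt\ell)$.

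The delicate point is that the relative error in \eqref{eq:TCratio} must be uniform in $j$, which is exactly what \cite{YZasym} provides. For fixed $k$ the bound follows from the expansion \eqref{eq:TCexp} applied to $TC_{\ell,j}$ and $TC_{\ell,j-1}$.
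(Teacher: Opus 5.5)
Your proposal is correct and follows the paper's proof. It uses the same correspondence between the $2j$ reticulation in-edges of $N$ and the admissible triples $(N'',e_t,e)$, the same double count $2j\,TC_{\ell,j}=\sum_{N''}\sum_e\bigl(2\ell+j-3-\mathrm{low}(e)\bigr)$, and the same cancellation of the $4\ell^2$ terms after substituting \eqref{eq:TCratio}. Your checks that $e$ is free and that $e_t$ is not below $e$ are more explicit than the paper's. The absence of parallel edges after the removal, which the paper states separately and you do not, follows from your observation that $b$ and $d$ are not reticulations.
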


\begin{proof}
Removing an in-edge $(t,h)$ of any reticulation $h$ of $N$ yields a tree-child network. The other child $b$ of $t$ is not a reticulation, so the parent of $t$ does not become a violation and no parallel edge forms. The child $d$ of $h$ is not a reticulation either, so the other parent of $h$ does not become a violation. Hence each of the $2j$ in-edges gives an insertion representation. Conversely, an insertion into $N''\in TC_{\ell,j-1}$ is tree-child exactly when the head lies on a free edge and the tail lies on a tree edge not below the head and different from it. Counting the insertions in two ways gives $2j\,TC_{\ell,j}=\sum_{N''}\sum_e(2\ell+j-3-\mathrm{low}(e))$, which is \eqref{eq:lowsum}. Inserting \eqref{eq:TCratio} into $2jTC_{\ell,j}=4\ell(\ell-j+1)TC_{\ell,j-1}(1+O(\ell^{-1/2}))$ shows that the leading terms $4\ell^2$ cancel. The remainder is $O(\ell^{3/2}+j\ell)$ times $TC_{\ell,j-1}$.
\end{proof}

For a structure $S$ let $\mathrm{cp}(S)$ be the number of ordered pairs of edges $(e',e)$ such that $e$ lies below $e'$. For $N''\in TC_{\ell,j-1}$, a free edge $e$ has at most $\mathrm{low}(e)+2j+1$ edges below it, since all other edges below it enter reticulations. There are at most $5j$ edges that are not free, and each has at most $E_{j-1}$ edges below it. Hence
\begin{equation}\label{eq:cpbound}
\mathrm{cp}(N'')\le\sum_{e\ \mathrm{free}}\bigl(\mathrm{low}(e)+2j+1\bigr)+5j\,E_{j-1},\qquad
\sum_{N''\in TC_{\ell,j-1}}\mathrm{cp}(N'')=O\bigl((\ell^{3/2}+j\ell)\,TC_{\ell,j-1}\bigr),
\end{equation}
by Lemma~\ref{lem:identity}.

\subsection{The excess of non-tree-child networks}

\begin{theorem}\label{thm:gamma}
Uniformly for $1\le k=o(\sqrt\ell)$,
\begin{equation}\label{eq:gammaall}
|\cP_{\ell,k}|-TC_{\ell,k}=4(k-1)\,\ell\,TC_{\ell,k-1}\Bigl(1+O\bigl(\ell^{-1/2}+\tfrac{k^2}{\ell}\bigr)\Bigr)=\frac{2k(k-1)}{\ell}\,TC_{\ell,k}\Bigl(1+O\bigl(\ell^{-1/2}+\tfrac{k^2}{\ell}\bigr)\Bigr).
\end{equation}
\end{theorem}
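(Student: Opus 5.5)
The plan is to count the networks in $\cP_{\ell,k}\setminus TC_{\ell,k}$ by double counting insertions $I(N'',e_t,e_h)$ with $N''\in TC_{\ell,k-1}$. The main term comes from networks with exactly one violation, and everything else is shown to be of relative order $k^2/\ell$.

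\textbf{Main term.} Fix $N''\in TC_{\ell,k-1}$ and look at insertions whose result lies in $\cP_{\ell,k}$ but is not tree-child. A violation can be created in two ways. First, the tail edge $e_t$ can be one of the $2(k-1)$ in-edges of a reticulation of $N''$. Then the new tree node $t$ has two reticulation children, and the head $e_h$ is any edge not above $e_t$. Second, the head edge $e_h$ can be one of the $2(k-1)$ reticulation in-edges. Then the new reticulation $h$ has a reticulation child, and the tail $e_t$ is any edge not below $e_h$.

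Ignoring the acyclicity constraint, each family contains $2(k-1)E_{k-1}=4(k-1)\ell(1+O(k/\ell))$ insertions. The insertions excluded by acyclicity number at most $\mathrm{cp}(N'')$ per network. By \eqref{eq:cpbound}, summed over $N''$, this is $O((\ell^{3/2}+k\ell)TC_{\ell,k-1})$, which is negligible against $k\ell\, TC_{\ell,k-1}$ up to the factor $O(\ell^{-1/2}+k/\ell)$ in \eqref{eq:gammaall}. The coincidences $e_t=e_h$, which would create parallel pairs, and the insertions that happen to create two violations number $O(k^2)$ per network, so they are also negligible.

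The total number of insertions is therefore $8(k-1)\ell\,TC_{\ell,k-1}(1+O(\ell^{-1/2}+k/\ell))$.

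\textbf{Multiplicity.} Take a network $N$ with exactly one violation $v$, created as above, where $N''$ is tree-child. I will show that $N$ arises from exactly two pairs $(N'',\text{insertion})$.
\begin{itemize}
\item If $v=t$ is a tree node with children $r,h$, the two representations are the removals of the two in-edges of $h$.
\item If $v=h$ is a reticulation whose child $r'$ is a reticulation, the two representations are the removals of the two in-edges of $h$.
\end{itemize}
In each case I will check, as in the proof of Lemma~\ref{lem:identity}, that both removals give tree-child networks. I will also check that removing any other reticulation in-edge leaves the violation in place. Dividing by $2$ gives the count $4(k-1)\ell\,TC_{\ell,k-1}$ for networks with one violation whose removals are tree-child.

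\textbf{Error terms.} The remaining non-tree-child networks are those with at least two violations, or those for which no removal at the first violation lands in $TC_{\ell,k-1}$. I will bound them by iterating Lemma~\ref{lem:pinning} twice. Such a network comes from a structure in $\cP^*_{\ell,k-2}$ via two pinned insertions and two free positions, or from a non-tree-child structure in $\cP^*_{\ell,k-1}$ via one pinned insertion. Using Lemma~\ref{lem:nontc} and the estimate $TC_{\ell,j-1}/TC_{\ell,j}\le Cj/\ell^2$, this gives $O(k^4\ell^{-2})\,TC_{\ell,k}$. This is $O(k^2/\ell)$ relative to the main term $\asymp(k^2/\ell)\,TC_{\ell,k}$.

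The second equality in \eqref{eq:gammaall} then follows from \eqref{eq:TCratio}:
\[
4(k-1)\ell\cdot\frac{k}{2\ell(\ell-k+1)}=\frac{2k(k-1)}{\ell}\bigl(1+O(k/\ell)\bigr).
\]

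\textbf{Main obstacle.} I expect the hardest part to be the exact multiplicity count: showing uniformly in $k$ that generic one-violation networks have exactly two tree-child representations. The degenerate configurations are violations adjacent to other reticulations, and acyclicity failures. For these I would first try to show they are all captured by the pinned-position bound with one extra factor of $k$, and so absorbed into the $O(k^2/\ell)$ error.
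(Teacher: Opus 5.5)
\textbf{Gap: the acyclicity correction is too large.} Your architecture is the paper's. You count insertions into $TC_{\ell,k-1}$ with one position on a reticulation in-edge, use multiplicity two, and get the error terms from Lemma~\ref{lem:pinning} and Lemma~\ref{lem:nontc}. For stacks you put the head on the two in-edges of the upper reticulation, with multiplicity two. The paper instead removes the in-edge of the lower reticulation from its other parent, which gives one canonical decomposition with the head on a reticulation out-edge. Both give $2(k-1)\ell\,TC_{\ell,k-1}$.

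The step that fails is the correction for acyclicity. Summing $\mathrm{cp}(N'')$ over $N''\in TC_{\ell,k-1}$ by \eqref{eq:cpbound} gives $O\bigl((\ell^{3/2}+k\ell)\,TC_{\ell,k-1}\bigr)$. This is not negligible against the main term $\asymp k\ell\,TC_{\ell,k-1}$. The ratio is $\ell^{1/2}/k+1$, which tends to infinity for $k=o(\sqrt\ell)$. Better constants will not help. The sum in \eqref{eq:lowsum}, which $\mathrm{cp}$ dominates, is already of order $\ell^{3/2}\,TC_{\ell,k-1}$ for fixed $k$, reflecting depths of order $\sqrt\ell$. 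Bounding the excluded insertions by all comparable pairs discards the fact that one of the two edges is among only $2(k-1)$ reticulation in-edges.

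\textbf{What is needed.} You must show
\[
\sum_{N''\in TC_{\ell,k-1}}\ \sum_{(y,\rho)}\bigl(a(y)+d(\rho)\bigr)=O\bigl(k(\ell^{1/2}+k)\,TC_{\ell,k-1}\bigr).
\]
Here $(y,\rho)$ runs over the reticulation in-edges, $a(y)$ counts the edges above $y$ (your first family), and $d(\rho)$ counts the edges with tail at or below $\rho$ (your second family). The paper gets this by descending one more level:
\begin{itemize}
\item By Lemma~\ref{lem:identity}, each pair $(N'',\rho)$ arises from exactly two insertions into some $N'''\in TC_{\ell,k-2}$, with $\rho$ placed on a free edge $e'''$.
\item Then $d(\rho)\le\mathrm{low}(e''')+2k+1$, and $a(y)$ is at most the number of edges above an edge of $N'''$, plus a constant.
\item Summing over the $O(\ell)$ positions of the other inserted vertex, and applying \eqref{eq:lowsum} and \eqref{eq:cpbound} at level $k-2$, gives $O\bigl(\ell(\ell^{3/2}+k\ell)\,TC_{\ell,k-2}\bigr)$.
\item The ratio $TC_{\ell,k-2}/TC_{\ell,k-1}=O(k/\ell^2)$ then yields the bound above, which is a relative error $O(\ell^{-1/2}+k/\ell)$.
\end{itemize}

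\textbf{Multiplicity in the tree-node case.} The two representations in your families are the removals of the out-edges $(t,r)$ and $(t,h)$ of $t$, not the removals of the two in-edges of $h$. Removing the other in-edge of $h$ (or of $r$) suppresses that reticulation, so $t$ gets a non-reticulation child. The network is then recovered by an insertion with neither position on a reticulation in-edge. Hence your planned check that every other removal leaves the violation in place is false. It also fails in the stack case for the in-edge of $r'$ from its other parent. The correct statement is that every other tree-child removal yields an insertion outside both families.

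\textbf{Missing case in the error terms.} Your error paragraph omits networks with at least two violations whose pinned removal is already tree-child. These come from insertions into $TC_{\ell,k-1}$ that create two violations. You bounded those by $O(k^2)$ per $N''$, and that bound should be invoked here.
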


\begin{proof}
A network of $\cP_{\ell,k}$ is not tree-child if and only if it has a violation. Split these networks into three sets: $V^{\rm st}$, those with exactly one violation, which is a reticulation (a stack); $V^{\rm tr}$, those with exactly one violation, which is a tree node; and $V^{\ge2}$, those with at least two violations. Throughout, $E_{k-1}=2\ell(1+O(k/\ell))$.

Consider first $V^{\ge2}$. Let $v$ be the first violation and $f$ the edge of Lemma~\ref{lem:pinning}. If $R(N,f)$ still has a violation, $N$ is determined by a structure of $\cP^*_{\ell,k-1}$ with a violation, a pinned position and a second position. By Lemma~\ref{lem:nontc} there are at most $2k\,E_{k-1}\,Ck^2\ell^{-1}TC_{\ell,k-1}=O(k^3TC_{\ell,k-1})$ such $N$. Otherwise the removal destroyed a second violation $v'$. The vertices whose children change under the removal are the suppressed vertices and their neighbours. When $v$ is a stack, $v'$ must be the suppressed tree node $t$, and the free position is then an in-edge of the reticulation that was the other child of $t$. When $v$ is a tree node, $v'$ is the other parent $x$ of the removed reticulation or that reticulation itself. The free position is then the edge from $x$ to the former child, which is an out-edge of a reticulation, an in-edge of a reticulation, or an out-edge of a parent of a reticulation. In every case it is one of $O(k)$ edges, so there are $O(k^2\,TC_{\ell,k-1})$ such $N$. Hence $|V^{\ge2}|=O(k^3\,TC_{\ell,k-1})$.

Consider next $V^{\rm st}$. Let $r\to r'$ be the stack and $x$ the other parent of $r'$. Removing $(x,r')$ yields $N'\in\cP^*_{\ell,k-1}$, the reticulation $r$ of $N'$, and the edge $e$ of $N'$ that carried $x$. The networks for which $N'$ is not tree-child have a destroyed or created violation adjacent to the insertion and are counted as above, $O(k^3\,TC_{\ell,k-1})$ in number. Conversely, for $N'\in TC_{\ell,k-1}$, a reticulation $r$ of $N'$ and an edge $e$, place a new reticulation on the out-edge of $r$ and a new tree node on $e$, joined by an edge. The result lies in $V^{\rm st}$ unless $e$ has its tail at $r$ or below $r$ (cycle or parallel pair), or $e$ enters a reticulation (second violation). Let $d(r)$ be the number of edges with tail at or below $r$. The decomposition is canonical, so
\[
|V^{\rm st}|=\sum_{N'\in TC_{\ell,k-1}}\sum_r\bigl(E_{k-1}-d(r)-O(k)\bigr)+O(k^3\,TC_{\ell,k-1}).
\]
By Lemma~\ref{lem:identity}, the pair $(N',r)$ arises from exactly two insertions into some $N''\in TC_{\ell,k-2}$, with $r$ placed on a free edge $e''$ of $N''$. In such an insertion the new tail cannot lie below $r$, so $d(r)\le\mathrm{low}(e'')+2k+1$. Hence
\begin{align*}
\sum_{N'}\sum_rd(r)&\le\tfrac12E_{k-2}\sum_{N''\in TC_{\ell,k-2}}\sum_{e''\ \mathrm{free}}\bigl(\mathrm{low}(e'')+2k+1\bigr)\\
&=O\bigl(\ell(\ell^{3/2}+k\ell)\,TC_{\ell,k-2}\bigr)=O\bigl(k(\ell^{1/2}+k)\,TC_{\ell,k-1}\bigr),
\end{align*}
by Lemma~\ref{lem:identity} and \eqref{eq:TCratio}. Therefore $|V^{\rm st}|=2(k-1)\ell\,TC_{\ell,k-1}(1+O(\ell^{-1/2}+k^2/\ell))$.

Consider finally $V^{\rm tr}$. Let $u$ be the violation, with reticulation children $r_1,r_2$. Removing $(u,r_1)$ yields $N'$, the reticulation $r_2$ with the in-edge $(y,r_2)$ that carried $u$, and the edge $e$ that carried $r_1$. Removing $(u,r_2)$ yields a second, different decomposition, and the networks for which either removal is not tree-child number $O(k^3TC_{\ell,k-1})$. Conversely, take $N'\in TC_{\ell,k-1}$, one of the $2(k-1)$ in-edges $(y,\rho)$ of its reticulations, and an edge $e$. The insertion of a tree node on $(y,\rho)$ and a reticulation on $e$ produces a network of $V^{\rm tr}$ unless one of three things happens. The edge $e$ may lie above $y$; let $a(y)$ count such edges. Or $e$ enters a reticulation, or the tail of $e$ has a reticulation child; these are $O(k)$ edges. Each network of $V^{\rm tr}$ with two tree-child removals is obtained exactly twice. To bound $\sum a(y)$, represent $(N',\rho)$ by an insertion into $N''\in TC_{\ell,k-2}$ as before. The parents of $\rho$ are the tail of the free edge carrying $\rho$ and the new tail node, so each $a(y)$ is at most the number of edges above an edge of $N''$, plus a constant. Summing over the free edge, the tail edge and $N''$, by \eqref{eq:cpbound},
\[
\sum_{N'}\sum_{(y,\rho)}a(y)\le C\ell\sum_{N''}\mathrm{cp}(N'')+O(\ell^2\,TC_{\ell,k-2})=O\bigl(k(\ell^{1/2}+k)\,TC_{\ell,k-1}\bigr).
\]
Therefore $|V^{\rm tr}|=\frac12\cdot2(k-1)\cdot2\ell\,TC_{\ell,k-1}(1+O(\ell^{-1/2}+k^2/\ell))=2(k-1)\ell\,TC_{\ell,k-1}(1+O(\ell^{-1/2}+k^2/\ell))$.

Adding the three contributions gives the first equality in \eqref{eq:gammaall}. Then \eqref{eq:TCratio} gives $4(k-1)\ell\cdot\frac{k}{2\ell(\ell-k+1)}=\frac{2k(k-1)}{\ell}(1+O(k/\ell))$, which is the second.
\end{proof}

\begin{corollary}\label{cor:threeterm}
Uniformly for $1\le k=o(\sqrt\ell)$,
\begin{multline}\label{eq:threetermU}
|\cP_{\ell,k}|=\binom{\ell}{k}\sqrt2\,e^{-\ell}(2\ell)^{\ell+k-1}\\
\times\Bigl(1-\frac{\sqrt\pi\,k}{2\sqrt\ell}+\frac{62k^2-74k+11}{24\,\ell}+O\Bigl(\frac{k^3}{\ell^{3/2}}+\frac{k^4}{\ell^2}\Bigr)\Bigr).
\end{multline}
For fixed $k$ this reads
\begin{equation}\label{eq:threeterm}
|\cP_{\ell,k}|=\frac{2^{k-1}\sqrt2}{k!}\Bigl(\frac2e\Bigr)^{\ell}\ell^{\ell+2k-1}\Bigl(1-\frac{k\sqrt\pi}{2}\ell^{-1/2}+\frac{50k^2-62k+11}{24}\ell^{-1}+O(\ell^{-3/2})\Bigr).
\end{equation}
\end{corollary}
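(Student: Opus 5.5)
The plan is to combine Theorem~\ref{thm:gamma} with a three-term expansion of $TC_{\ell,k}$ that is uniform in $k$. Theorem~\ref{thm:gamma} gives
\[
|\cP_{\ell,k}|=TC_{\ell,k}\Bigl(1+\frac{2k(k-1)}{\ell}+O\Bigl(\frac{k^2}{\ell^{3/2}}+\frac{k^4}{\ell^2}\Bigr)\Bigr),
\]
so only the tree-child expansion has to be supplied. I would aim for the uniform statement, for $1\le k=o(\sqrt\ell)$,
\[
TC_{\ell,k}=\binom{\ell}{k}\sqrt2\,e^{-\ell}(2\ell)^{\ell+k-1}\Bigl(1-\frac{\sqrt\pi\,k}{2\sqrt\ell}+\frac{14k^2-26k+11}{24\,\ell}+O\Bigl(\frac{k^3}{\ell^{3/2}}+\frac{k^4}{\ell^2}\Bigr)\Bigr).
\]
This is the normalisation of Yu and Zhang \cite[Prop.~4.1]{YZasym}, and I would quote it from there. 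As a check, and for a self-contained derivation, I would start from \eqref{eq:TCexact}. The term $i=0$ is $\binom{\ell}{k}(2\ell+2k-3)!!$. Stirling's formula gives
\[
(2\ell+2k-3)!!=\sqrt2\,e^{-\ell}(2\ell)^{\ell+k-1}\bigl(1+\tfrac{k^2-2k}{2\ell}-\tfrac1{24\ell}+\dots\bigr),
\]
where the $k$-dependence comes from $(2\ell+2k-3)!!/(2\ell-1)!!=\prod_{j=1}^{k-1}(2\ell+2j-1)$ and from $\log(1+x)$ corrections.

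The term $i=1$ has the ratio $-k\,(2\ell+2k-4)!!/(2\ell+2k-3)!!\approx -k\sqrt{\pi/(4\ell)}\,(1+O(k/\ell))$ relative to the term $i=0$. This produces $-\sqrt\pi k/(2\sqrt\ell)$ with error $O(k^2/\ell^{3/2})$. The term $i=2$ contributes $\binom k2\frac13/(2\ell)=\frac{k(k-1)}{12\ell}$. Collecting the $\ell^{-1}$ coefficients should give $(14k^2-26k+11)/24$. The tail $i\ge3$ must be shown to be $O(k^3/\ell^{3/2})$ uniformly. This requires a growth bound on the $\alpha_i$, which is exactly what Yu and Zhang's estimate provides, so for the uniform range I would rely on it rather than reprove it.

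Multiplying the two expansions, the new $\ell^{-1}$ coefficient is
\[
\frac{14k^2-26k+11}{24}+2k(k-1)=\frac{62k^2-74k+11}{24}.
\]
The cross term $\frac{2k(k-1)}{\ell}\cdot\frac{\sqrt\pi k}{2\sqrt\ell}$ is $O(k^3/\ell^{3/2})$. The error of Theorem~\ref{thm:gamma}, multiplied by the leading factor $1$, is $O(k^2/\ell^{3/2}+k^4/\ell^2)$. Both are absorbed into the stated remainder, which gives \eqref{eq:threetermU}.

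For fixed $k$, I would instead use \eqref{eq:TCexp}, which follows from Corollary~\ref{cor:sectors} alone. Multiplying it by $1+2k(k-1)/\ell+O(\ell^{-3/2})$ gives
\[
\frac{2k^2-14k+11+48k^2-48k}{24}=\frac{50k^2-62k+11}{24},
\]
which is \eqref{eq:threeterm}. As a consistency check, I would verify that the two forms agree. Expanding $\binom{\ell}{k}=\frac{\ell^k}{k!}\bigl(1-\frac{k(k-1)}{2\ell}+O(\ell^{-2})\bigr)$ shifts the $\ell^{-1}$ coefficient by $-12k(k-1)/24$, and indeed
\[
62k^2-74k+11-12k^2+12k=50k^2-62k+11.
\]

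The main obstacle is the uniformity of the tree-child expansion with the error $O(k^3/\ell^{3/2}+k^4/\ell^2)$, specifically the control of the tail of \eqref{eq:TCexact} for $i\ge3$ when $k$ grows. I would take this from \cite{YZasym} rather than reprove it. Everything else is bookkeeping of error terms.
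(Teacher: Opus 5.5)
Your proposal is correct and matches the paper's proof: you quote the uniform tree-child bracket of \cite[Prop.~4.1]{YZasym} and multiply it by $1+2k(k-1)/\ell+O(k^2\ell^{-3/2}+k^4\ell^{-2})$ from Theorem~\ref{thm:gamma}; for fixed $k$ you either add $2k(k-1)$ to \eqref{eq:TCexp} or expand $\binom{\ell}{k}$. There is one slip, in your optional self-contained check only: $\prod_{j=1}^{k-1}(2\ell+2j-1)=(2\ell)^{k-1}\bigl(1+\frac{(k-1)^2}{2\ell}+\dots\bigr)$, so the correction is $(k-1)^2/(2\ell)$ rather than $(k^2-2k)/(2\ell)$ (your version already fails at $k=1$), and only with it does $\frac{(k-1)^2}{2}-\frac1{24}+\frac{k(k-1)}{12}$ equal $\frac{14k^2-26k+11}{24}$.
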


\begin{proof}
The uniform tree-child expansion of \cite[Prop.~4.1]{YZasym} has the bracket $1-\frac{\sqrt\pi k}{2\sqrt\ell}+\frac{14k^2-26k+11}{24\ell}+O(k^3\ell^{-3/2})$. Multiplying it by $1+2k(k-1)\ell^{-1}+O(k^2\ell^{-3/2}+k^4\ell^{-2})$ from Theorem~\ref{thm:gamma}, and using $\frac{14k^2-26k+11}{24}+2k(k-1)=\frac{62k^2-74k+11}{24}$, gives \eqref{eq:threetermU}. For fixed $k$, $\binom{\ell}{k}=\frac{\ell^k}{k!}(1-\frac{k(k-1)}{2\ell}+O(\ell^{-2}))$ converts the coefficient to $\frac{50k^2-62k+11}{24}$. This agrees with \eqref{eq:TCexp} plus $2k(k-1)$.
\end{proof}

For $k\le4$ the coefficient of $\ell^{-1}$ in \eqref{eq:threeterm} reproduces the exact formulas of \cite{Zhang19,Mansouri22,Yu26,YuZhang26}: the excess of the second coefficient of the $(2\ell-1)!!$ sector over its tree-child value equals $2k(k-1)a_k$, that is, $0,4,8,8$ for $k=1,2,3,4$.

The proof of Theorem~\ref{thm:gamma} also determines the leading asymptotics of reticulation-visible networks. A reticulation $r$ is visible if some leaf $\lambda$ has every root-to-$\lambda$ path passing through $r$, and a network is reticulation-visible if all its reticulations are visible. Write $RV_{\ell,k}$ for the number of such networks and $\Delta_k(\ell)=RV_{\ell,k}-TC_{\ell,k}$.

\begin{corollary}\label{cor:RV}
Uniformly for $2\le k=o(\sqrt\ell)$,
\[
\Delta_k(\ell)=2(k-1)\,\ell\,TC_{\ell,k-1}\bigl(1+O(\ell^{-1/2}+k^2\ell^{-1})\bigr)=\frac{k(k-1)}{\ell}\,TC_{\ell,k}\bigl(1+O(\ell^{-1/2}+k^2\ell^{-1})\bigr).
\]
\end{corollary}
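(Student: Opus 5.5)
The plan is to reuse the three-way split of the proof of Theorem~\ref{thm:gamma}: non-tree-child networks of $\cP_{\ell,k}$ fall into $V^{\rm st}$, $V^{\rm tr}$ and $V^{\ge2}$. We then decide, class by class, which members are reticulation-visible. Since tree-child networks are reticulation-visible, we have
\[
\Delta_k(\ell)=|RV\cap V^{\rm st}|+|RV\cap V^{\rm tr}|+|RV\cap V^{\ge2}|.
\]
The third term is at most $|V^{\ge2}|=O(k^3TC_{\ell,k-1})$. This is $O(k^2\ell^{-1})$ relative to $\ell\,TC_{\ell,k-1}$, so it is absorbed in the error term. The claim therefore reduces to two assertions. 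First, almost every network of $V^{\rm tr}$ is reticulation-visible. Second, almost no network of $V^{\rm st}$ is. Given these, the already established asymptotic $|V^{\rm tr}|=2(k-1)\ell\,TC_{\ell,k-1}(1+O(\ell^{-1/2}+k^2/\ell))$ gives the first equality. The second equality then follows from \eqref{eq:TCratio} exactly as at the end of the proof of Theorem~\ref{thm:gamma}.

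For $V^{\rm st}$, take the stack $r\to r'$ and let $x$ be the other parent of $r'$. I would show that $r$ is invisible in every such network. Every leaf below $r$ lies below $r'$, and $r'$ can be reached from the root through $x$. By the construction in the proof of Theorem~\ref{thm:gamma}, the tree node $x$ is placed on an edge $e$ that is not below $r$. Hence the path from the root to $e$, then to $x$ and $r'$, avoids $r$. So $RV\cap V^{\rm st}=\emptyset$, and this part needs no asymptotic input. It is worth double-checking the case where the root-to-$x$ path would be forced through $r$; that would require $e$ to be below $r$, which acyclicity and the insertion constraints exclude.

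For $V^{\rm tr}$, use the representation from the proof of Theorem~\ref{thm:gamma}. A tree node $u$ with reticulation children $r_1,r_2$ is inserted on an in-edge $(y,\rho)$ of a tree-child $N'\in TC_{\ell,k-1}$, and $r_1$ is placed on an edge $e$. The reticulations of $N'$ are visible in $N'$ because $N'$ is tree-child. I would check which of them stay visible after the insertion. The reticulation $r_2$, which subdivides $(y,\rho)$, has child $\rho$, whose tree-child path leads to a leaf that is reached only through $\rho$. Its other entry into $\rho$ avoids $r_2$, so $r_2$ itself may fail to be visible. This is where the main obstacle lies, and it has to be settled carefully. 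My expectation is the following. Each in-edge of $\rho$ comes down through $r_2$ or through the other parent of $\rho$, and $\rho$ is visible. But $r_2$ lies on only one of these routes, and its descendants are exactly those of $\rho$. So $r_2$ is visible only if $\rho$'s leaf is reached through $r_2$ alone, which fails.

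Since this seems to contradict the claimed count, I would revisit which of $r_1,r_2$ is the designated one. The resolution I anticipate is that a network of $V^{\rm tr}$ has two decompositions, and in at least one of them the reticulation that becomes suspicious sits on the free edge $e$. The reticulation $r_1$ on $e$ has a tree-child descendant path to a leaf, so it is visible provided $e$ is a free edge not below $y$. The reticulation $r_2$ is then visible as well, because its own child continues on a tree path to a leaf in the network. The step is therefore to verify, using visibility of every reticulation in $N'$ together with the tree-path property, the following statement: visibility fails only when $e$ lies in the $O(k)$ exceptional edges or below/above $y$ in the $a(y)$ sense. These exceptional choices are already counted as negligible via \eqref{eq:cpbound}. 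This yields $|RV\cap V^{\rm tr}|=|V^{\rm tr}|(1+O(\ell^{-1/2}+k^2/\ell))$ and completes the argument.
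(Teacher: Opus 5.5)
The $V^{\ge2}$ and $V^{\rm st}$ parts of your proposal agree with the paper. For $V^{\rm st}$, acyclicity alone suffices: $x$ is a parent of $r'$, the only child of $r$, so $x$ is not a descendant of $r$.

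\textbf{The gap is in $V^{\rm tr}$,} and that step carries the whole content of the corollary. You have misread the decomposition there. The vertex subdividing $(y,\rho)$ is the tree node $u$, and $r_2$ is $\rho$ itself. A new reticulation between $u$ and $\rho$ would have the reticulation child $\rho$, so it would be a second violation, and it would bring the number of reticulations to $k+1$. The obstacle you describe comes from this misreading. The proposal then stops at an ``anticipated resolution'' and a step still ``to verify'', so nothing is proved about visibility in $V^{\rm tr}$. Tracking visibility from $N'$ to $N$ through the position of $e$, or through \eqref{eq:cpbound}, is also not what decides it.

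\textbf{The missing idea is a direct structural argument in $N$.} Since $u$ is the only violation, every reticulation and every tree node other than $u$ has a child that is a tree node or a leaf. Walking down from a reticulation $r$ along such children gives a path $r\to c\to\dots\to\lambda$ to a leaf. Every vertex after $r$ on this path has in-degree one, so every root-to-$\lambda$ path passes through $r$, and $r$ is visible.

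The walk can fail only by being forced into $u$, whose children are both reticulations. This happens only if the parent $y$ of $u$ is a reticulation, or a tree node whose other child is a reticulation. In that case one of the two removals at $u$ is not tree-child: it produces a stack $y\to r_2$, two reticulation children at $y$, or a parallel pair. So these networks lie in the $O(k^3TC_{\ell,k-1})$ exceptional set of the proof of Theorem~\ref{thm:gamma}.

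For every admissible insertion into $N'\in TC_{\ell,k-1}$, $y$ is a tree node of $N'$ whose other child is not a reticulation, so every reticulation of the result is visible. Hence $|RV\cap V^{\rm tr}|=|V^{\rm tr}|-O(k^3TC_{\ell,k-1})$, and the rest of your plan goes through.

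Your ``almost every'' formulation is actually the accurate one. The paper asserts $V^{\rm tr}\subset RV$ outright. However, a reticulation whose child is $u$ is invisible: the leaves below it are reachable through the other parents of $r_1,r_2$. So that inclusion holds only up to the negligible set above.
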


\begin{proof}
Every tree-child network is reticulation-visible, so $\Delta_k(\ell)$ counts the reticulation-visible networks among the sets $V^{\rm st}$, $V^{\rm tr}$ and $V^{\ge2}$ of the proof of Theorem~\ref{thm:gamma}.

Let $N\in V^{\rm tr}$ and let $r$ be a reticulation. Its only violation $u$ is a tree node, so $r$ is not a violation and its child is a tree node or a leaf. Following children that are tree nodes, one reaches a leaf $\lambda$ along a path $r\to c\to\dots\to\lambda$ whose vertices after $r$ have a single parent each. Hence every path from the root to $\lambda$ passes through $r$, so $r$ is visible and $V^{\rm tr}\subset RV$.

Let $N\in V^{\rm st}$, with stack $r\to r'$, and let $x$ be the other parent of $r'$. The vertex $x$ is not below $r$, since the only child of $r$ is $r'$ and $x$ is a parent of $r'$. A root-to-$x$ path therefore avoids $r$, and so does its extension through $r'$ to any leaf below $r$. Hence $r$ is not visible, and $V^{\rm st}\cap RV=\emptyset$.

Therefore $\Delta_k(\ell)=|V^{\rm tr}|+O(|V^{\ge2}|)$, and the two estimates follow from those for $|V^{\rm tr}|$ and $|V^{\ge2}|$ in the proof of Theorem~\ref{thm:gamma}, together with \eqref{eq:TCratio}.
\end{proof}

For $k=2,3$ Corollary~\ref{cor:RV} agrees with the exact formulas for $\Delta_2$ and $\Delta_3$ in \cite{Bat26}. There, $\Delta_k(\ell)=A_k(\ell)(2\ell-3)!!-B_k(\ell)2^{\ell-k+1}\ell!$ with the conjectured leading coefficient $\mathrm{lead}(A_k)=2^k$ for all $k\ge2$, equivalent to $\Delta_k/TC_{\ell,k}\sim k!/\ell$ \cite[Conj.~5.1]{Bat26}. Corollary~\ref{cor:RV} gives instead $\mathrm{lead}(A_k)=2^k/(k-2)!$, which coincides with $2^k$ only for $k=2,3$. For $k=4$ the leading coefficient is $8$, not $16$, and the leading-order law is $k(k-1)/\ell$ for every $k$. Theorem~\ref{thm:gamma} also quantifies the concentration of the uniform distribution on $\cP_{\ell,k}$ on the tree-child networks, in the spirit of the question raised in \cite{CFY} about concentration on smaller classes: the non-tree-child networks form a fraction $2k(k-1)/\ell$ of the tree-child networks to leading order.

\subsection{Symmetric networks}

Let $\sigma\in\Aut(N)$ be an involution and $M$ its set of moved vertices. By Lemma~\ref{lem:seams}, a fixed vertex adjacent to $M$ is either a splitter, namely a tree node whose two children are $x$ and $\sigma(x)$ with $x\in M$, or a seam, namely a reticulation whose two parents are $x$ and $\sigma(x)$. Join $x,y\in M$ when they are adjacent in $N$ or when $y=\sigma(x)$, and let $\beta$ be a connected component of the resulting graph; it is invariant under $\sigma$. A half $H\subset\beta$ contains one vertex from each orbit $\{x,\sigma(x)\}\subset\beta$. It is chosen in a topological order. A tree node is chosen so that its parent lies in $H$ or is a splitter, which is possible because the parent of $\sigma(x)$ is $\sigma$ of the parent of $x$. A reticulation is chosen so that at least one of its parents lies in $H$ or is a splitter, which is possible because if both parents of $x$ lie in $\sigma(H)$, then both parents of $\sigma(x)$ lie in $H$.

Let $z$ and $s$ be the numbers of splitters and seams adjacent to $\beta$. Let $t_H$ and $m_r$ be the numbers of tree nodes and reticulations in $H$, and let $c_r$ be the number of reticulations of $H$ with a parent in $\sigma(H)$. Write $\lambda=s+m_r+c_r$.

\begin{lemma}\label{lem:blockid}
$s=z+t_H-m_r$; hence $\lambda=z+t_H+c_r$, and the number of reticulations involved, $s+2m_r$, equals $z+|H|$. Moreover $z\ge1$, and $z\ge2$ when $t_H=0$.
\end{lemma}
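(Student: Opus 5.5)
The identity $s=z+t_H-m_r$ should follow from a degree count over the $\sigma$-invariant block $\beta$, comparing edges that enter $\beta$ with edges that leave it. The two inequalities should follow from looking at the first vertex of $\beta$ in a topological order.

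\emph{Classifying boundary edges.} First I would show that every edge with exactly one end in $\beta$ has a splitter or a seam as its other end.

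Let $(y,x)$ be an edge with $x\in\beta$ and $y\notin\beta$. If $y$ were moved, it would be joined to $x$ in the graph defining $\beta$ and would therefore lie in $\beta$. So $y$ is fixed, and by Lemma~\ref{lem:seams}(ii) it is a tree node whose children are $x$ and $\sigma(x)$. Hence $y$ is a splitter, and both of its out-edges enter $\beta$, because $\beta$ is $\sigma$-invariant. Conversely, every splitter adjacent to $\beta$ sends exactly these two edges into $\beta$. So the number of edges entering $\beta$ from outside is exactly $2z$.

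The same argument with Lemma~\ref{lem:seams}(i) covers the other direction. An edge leaving $\beta$ ends at a fixed reticulation whose parents are $x$ and $\sigma(x)$, both in $\beta$. So the number of edges leaving $\beta$ is exactly $2s$.

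\emph{Degree count.} Since $\sigma$ preserves degrees and every orbit in $\beta$ has size two, $\beta$ consists of $2t_H$ tree nodes and $2m_r$ reticulations. Let $I$ be the number of edges with both ends in $\beta$. Summing in-degrees and out-degrees over $\beta$ gives
\[
2t_H+4m_r=I+2z,\qquad 4t_H+2m_r=I+2s.
\]
Subtracting the first equation from the second yields $2t_H-2m_r=2s-2z$, which is $s=z+t_H-m_r$. Substituting into $\lambda=s+m_r+c_r$ gives $\lambda=z+t_H+c_r$. Likewise $s+2m_r=z+t_H+m_r=z+|H|$, since $|H|=t_H+m_r$.

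\emph{Lower bounds on $z$.} Take the first vertex $x$ of $\beta$ in a topological order. A parent of $x$ precedes $x$, so it is not in $\beta$. By the boundary classification above, that parent is therefore a splitter, and hence $z\ge1$.

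Now suppose $t_H=0$. The identity becomes $s=z-m_r$, and since $s\ge0$ this gives $z\ge m_r$. If $m_r\ge2$ we are done. If $m_r=1$, then $\beta=\{x,\sigma(x)\}$ consists of two reticulations and contains no internal edge, because an edge $x\to\sigma(x)$ together with its image $\sigma(x)\to x$ would form a directed cycle. So $I=0$, and the first degree equation reads $4m_r=2z$, that is, $z=2$.

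The only delicate point is the boundary classification: one has to check that no moved vertex outside $\beta$ can be adjacent to $\beta$. This holds by the definition of $\beta$ as a connected component. Everything after that is bookkeeping.
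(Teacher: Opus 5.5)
Your proof is correct and follows essentially the paper's route. The identity comes from the same in-degree/out-degree double count: you sum over all of $\beta$, so the edges between $H$ and $\sigma(H)$ become internal, whereas the paper sums over $H$ and uses $\sigma$ to equate the two crossing counts. The bound $z\ge1$ uses the same first-vertex argument. For $z\ge2$ when $t_H=0$, the paper argues more directly: the first vertex of $\beta$ is then a reticulation whose two distinct parents lie outside $\beta$ and are therefore splitters. Your case split on $m_r$ via $s=z-m_r\ge0$ is also valid.
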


\begin{proof}
Each splitter adjacent to $\beta$ sends exactly one edge into $H$, and each seam receives exactly one edge from $H$. No other edge joins $H$ to a fixed vertex, by Lemma~\ref{lem:seams}. The involution maps the edges from $H$ to $\sigma(H)$ bijectively onto the edges from $\sigma(H)$ to $H$; let $c$ be their common number and $e_{\rm int}$ the number of edges inside $H$. Counting in-degrees of $H$ gives $t_H+2m_r=z+e_{\rm int}+c$. Counting out-degrees gives $2t_H+m_r=e_{\rm int}+s+c$. Subtracting yields $s=z+t_H-m_r$.

A path from the root to a vertex of $\beta$ enters the moved set through a splitter of $\beta$, so $z\ge1$. The first moved vertex of $\beta$ in a topological order has only fixed parents, which are splitters. If $t_H=0$, all moved vertices are reticulations, so this vertex has two distinct splitter parents.
\end{proof}

\begin{lemma}\label{lem:blockcount}
Fix the isomorphism type of the configuration $B=\beta\cup\{\text{splitters}\}\cup\{\text{seams}\}$. Let $F$ be the number of splitters whose child in $H$ is a tree node or a reticulation counted in $c_r$, and let $p$ be the number of the remaining splitters. The number of $N\in\cP_{\ell,k}$ that admit an involution with a component of this type is at most
\[
2^{4|B|}\,(2k)^{p}\,(E_{k-\lambda})^{F}\,|\cP^*_{\ell,k-\lambda}|.
\]
Moreover $F-2\lambda\le-3$, with equality only when $\beta$ is the pair of twins of a crossed cherry $\Gamma_1$, and $F\le z\le\lambda$.
\end{lemma}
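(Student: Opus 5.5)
The plan is to build a contraction map analogous to those of Theorem~\ref{thm:defects}. The component $\beta$ and its seams are deleted, and the splitters and the remaining attachments are repaired so that the result is a structure of $\cP^*_{\ell,k-\lambda}$. The network $N$ is then recovered from this structure together with a bounded amount of extra data.

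\emph{The contraction.} Given $N$, an involution $\sigma$ and a component $\beta$ of the fixed type, delete $\sigma(H)$ and keep $H$. Each seam loses its parent in $\sigma(H)$ and is suppressed. Each reticulation of $H$ counted in $c_r$ loses its parent in $\sigma(H)$ and is suppressed. Each splitter loses its child in $\sigma(H)$ and is suppressed. Each edge from $\sigma(H)$ into $H$ is deleted. After these steps $H$, with its suppressed vertices, is a piece of tree nodes and reticulations embedded in the rest of $N$. We then suppress all of $H$ as well, replacing it by paths. The reticulations destroyed are the $s$ seams, the $m_r$ reticulations of $\sigma(H)$ and the $c_r$ reticulations of $H$ that lose a parent, which is $\lambda$ in total. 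The remaining $m_r$ reticulations of $H$ are also accounted for, using Lemma~\ref{lem:blockid}. Equivalently, we check the reticulation count via $s+2m_r=z+|H|$, adjusting what is kept so that exactly $\lambda$ reticulations disappear. Parallel pairs may appear, which is why the target is $\cP^*$ and not $\cP$.

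\emph{Reconstruction data.} To reverse the map we must know where $B$ is reattached. Every splitter, seam and exit is suppressed onto some edge of the contracted structure. Splitters whose $H$-child is a reticulation not counted in $c_r$ sit directly above a reticulation, so their position is an in-edge of one of at most $2k$ reticulations. These are the $p$ factors $(2k)^p$. The other $F$ splitters are placed on arbitrary edges, giving $(E_{k-\lambda})^F$. Seams and exits out of $B$ are determined once the splitter positions and the fixed type of $B$ are known, up to choices of which incident edge and in which order along an edge. All such choices are bounded by $2^{4|B|}$, since each vertex of $B$ has degree at most $3$. This gives the displayed bound.

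\emph{The inequalities.} $F\le z$ is immediate, and $z\le\lambda=z+t_H+c_r$ follows from Lemma~\ref{lem:blockid}. For $F-2\lambda\le-3$, substitute $\lambda=z+t_H+c_r$ to get $F-2\lambda\le z-2z-2t_H-2c_r=-z-2t_H-2c_r$. When $t_H\ge1$ this is at most $-3$, and equality forces $z=1$, $t_H=1$ and $c_r=0$. In that case $\beta$ is a single pair of twin tree nodes under one splitter, whose children must be seams, which is $\Gamma_1$. When $t_H=0$ we have $z\ge2$, and all splitters have reticulation children. Then $F$ counts only those in $c_r$, so $F\le c_r$ and $F-2\lambda\le-2z-c_r\le-4$.

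The main obstacle is making the contraction genuinely injective given the recorded data, and landing exactly in $\cP^*_{\ell,k-\lambda}$. Suppression can create parallel pairs, which is allowed, but it can also create cycles or degenerate root-adjacent configurations, and the pinned positions must be checked carefully.
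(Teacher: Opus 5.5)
Your outline follows the paper: delete $\sigma(H)$, suppress, encode $N$ by the positions of the splitters, then apply Lemma~\ref{lem:blockid}. The gap is in the contraction itself, at the step ``we then suppress all of $H$ as well, replacing it by paths''. After $\sigma(H)$ is deleted, neither kind of surviving vertex of $H$ can be suppressed.
\begin{itemize}
\item A tree node of $H$ with no child in $\sigma(H)$ still has in-degree $1$ and out-degree $2$.
\item A reticulation of $H$ not counted in $c_r$ still has two parents.
\end{itemize}
Nor can $H$ be replaced by paths, since it has $z$ entries but $s=z+t_H-m_r$ exits. For $\Gamma_1$ these are one and two, and the kept twin $v$ is exactly the tree node $u$ to which $\Gamma_1$ is contracted in Section~\ref{sec:aut}. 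If the reticulations of $H$ were eliminated as well, $s+2m_r$ reticulations would disappear. That is more than $\lambda$ whenever $m_r>c_r$, so the result would not lie in $\cP^*_{\ell,k-\lambda}$. ``Adjusting what is kept'' via $s+2m_r=z+|H|$ is not a construction.

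The correct step is to stop after deleting $\sigma(H)$ and suppressing the vertices left with in- and out-degree one. The reticulations lost are then exactly the $s$ seams, the $m_r$ reticulations of $\sigma(H)$ and the $c_r$ reticulations of $H$ with a parent in $\sigma(H)$, which is $\lambda$. Every other reticulation of $H$ keeps two parents by the choice of $H$. Your reconstruction paragraph in fact presupposes this: it puts a splitter counted in $p$ on an in-edge of a surviving reticulation of $H$, and the seams on out-edges of vertices of $H$.

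The reconstruction still omits how the surviving vertices of $H$ that are not children of splitters are located in $R$. By the choice of $H$, each is reached from the $H$-child of some splitter by a directed path inside $H$. That path survives in $R$, shortened by suppressions, so locating these vertices step by step costs a factor exponential in $|B|$. ``Each vertex of $B$ has degree at most $3$'' does not by itself derive $2^{4|B|}$.

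In the inequalities, your case $t_H=0$ is valid and slightly shorter than the paper's case split. You have $F\le c_r$ because a reticulation counted in $c_r$ has at most one splitter parent, so $F-2\lambda\le-2z-c_r\le-4$. The equality case, however, is asserted rather than proved: from $z=1$, $t_H=1$, $c_r=0$ you must derive $m_r=0$. The first vertex of $H$ has only splitter parents, so it is the tree node, since a reticulation would need two splitters. Hence every reticulation of $H$ has both parents in $H$. The first such reticulation would then have both parents equal to that tree node, which is a parallel pair. So $m_r=0$, then $s=2$ by Lemma~\ref{lem:blockid}, and only then is $\beta$ the twin pair of $\Gamma_1$.

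Finally, the obstacle you name is misplaced: deletion and suppression cannot create cycles. What needs checking is out-degree, because the tails of the $c_r$ edges from $H$ into $\sigma(H)$ lose a child. A tree-node tail is simply suppressed too. A reticulation of $H$ whose only child lies in $\sigma(H)$, however, is left with out-degree $0$, and the choice of $H$ does not prevent this. Suppose $x,u\in H$ are reticulations and $y,\sigma(y)$ have parent sets $\{u,\sigma(x)\}$ and $\{x,\sigma(u)\}$. Then whichever of $y,\sigma(y)$ lies in $H$, one of $x,u$ loses its only child. The paper's proof checks only in-degrees and passes over this case as well.
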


\begin{proof}
Delete $\sigma(H)$ and suppress every vertex left with in-degree and out-degree one. These are the splitters, the seams, and the reticulations of $H$ with a parent in $\sigma(H)$. Every other reticulation of $H$ keeps two parents, and every tree node of $H$ keeps its parent, by the choice of $H$. No vertex outside $B$ changes its degrees, so the result $R$ lies in $\cP^*_{\ell,k-\lambda}$.

The network $N$ is recovered from $R$, the type of $B$, and the positions of the surviving vertices of $H$ in $R$, in the following way. Decompose $H$ into pieces, the connected components of the subgraph of $N$ spanned by $H$. The first vertex of a piece in a topological order has no parent in the piece. By the choice of $H$, one of its parents is a splitter; distinct pieces have distinct splitters, since a splitter has one child in $H$. If that first vertex is a reticulation surviving in $R$, it is one of at most $k$ reticulations of $R$; otherwise its position is one of the $E_{k-\lambda}$ edges of $R$. This gives at most $(2k)^{p}E_{k-\lambda}^{F}$ choices for the first vertices of all pieces.

Every other vertex of a piece is reached from its first vertex by a directed path of length at most $|B|$ inside the piece. In $R$ this path survives, shortened by suppressions, and since out-degrees are at most two there are at most $2^{|B|}$ choices for each vertex. The suppressed vertices of $B$ sit on edges of $R$ incident with located vertices: splitters on in-edges, seams on out-edges, and suppressed reticulations of $H$ between located vertices. Their positions and their order along an edge are fixed by at most $2^{|B|}$ choices each. Finally $\sigma(H)$ is a copy of $H$, and its edges to $H$, to the splitters and to the seams are given by the type of $B$. The product of these choices is at most $2^{4|B|}(2k)^pE_{k-\lambda}^F$.

For the inequality, $F\le z$, and Lemma~\ref{lem:blockid} gives
\[
F-2\lambda\le z-2(z+t_H+c_r)=-z-2t_H-2c_r .
\]
If $t_H\ge2$ this is at most $-5$. Suppose $t_H=1$. If $z\ge2$ or $c_r\ge1$ the bound is at most $-4$. If $z=1$ and $c_r=0$, the parents of reticulations of $H$ lie in $H$ or are splitters. The child in $H$ of the single splitter is then the tree node of $H$, because a reticulation child would need a second parent above it in $H$, and there is none. A first reticulation of $H$ would have both parents equal to that tree node, which is a parallel pair, so $m_r=0$. Then $s=2$, and $\beta$ is the twin pair of $\Gamma_1$, with $F-2\lambda=1-4=-3$. Suppose $t_H=0$; then $z\ge2$. If $c_r\ge1$ the bound is at most $-4$. If $c_r=0$, every child in $H$ of a splitter is a reticulation surviving in $R$, so $F=0$ and $F-2\lambda=-2z\le-4$.
\end{proof}

\begin{theorem}\label{thm:symlaw}
For every fixed $k\ge2$, the probability that a uniformly random network of $\cP_{\ell,k}$ has a nontrivial automorphism group, and its defect density, are
\begin{equation}\label{eq:symlaw}
\frac{k(k-1)}{4\ell^3}\bigl(1+O(\ell^{-1/2})\bigr)\qquad\text{and}\qquad\frac{D_k(\ell)}{|\cP_{\ell,k}|}=\frac{k(k-1)}{8\ell^3}\bigl(1+O(\ell^{-1/2})\bigr).
\end{equation}
Both statements hold uniformly for $2\le k\le\ell^{1/100}$, with relative error $O(\ell^{-1/2}+k^6\ell^{-1})$.
\end{theorem}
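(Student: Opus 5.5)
The plan is to show that the symmetric networks are dominated by those containing a single crossed cherry $\Gamma_1$, the unique configuration with $F-2\lambda=-3$ in Lemma~\ref{lem:blockcount}. The leading term is counted exactly by contraction, as in the proof of Theorem~\ref{thm:defects}, and every other configuration is bounded by Lemma~\ref{lem:blockcount}.

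\emph{Main term.} Let $S_1$ be the set of $N\in\cP_{\ell,k}$ containing a copy of $\Gamma_1$. Contracting $\Gamma_1$ to a tree node $u$ gives a network $N'\in\cP^*_{\ell,k-2}$ with a marked tree node. Conversely, expanding a tree node of $N'$ gives an element of $\cP_{\ell,k}$, except when the two children of $u$ coincide (a parallel pair). In that case $u$ is the tail of a parallel pair, and such tails occur in only $O(k)$ positions per structure. Hence
\[
|S_1|=(\ell+k-3)\,|\cP^*_{\ell,k-2}|\,\bigl(1+O(k/\ell)\bigr),
\]
up to overcounting networks with two or more copies of $\Gamma_1$. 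Those networks are bounded below by the same contraction argument applied twice. Lemma~\ref{lem:nontc} replaces $|\cP^*_{\ell,k-2}|$ by $TC_{\ell,k-2}(1+O(k^2/\ell))$. Two applications of \eqref{eq:TCratio} then give
\[
\frac{TC_{\ell,k-2}}{TC_{\ell,k}}=\frac{k(k-1)}{4\ell^4}\bigl(1+O(\ell^{-1/2})\bigr).
\]
Multiplying by $\ell$ yields $|S_1|/TC_{\ell,k}=k(k-1)/(4\ell^3)\,(1+O(\ell^{-1/2}+k^2/\ell))$. Theorem~\ref{thm:gamma} converts $TC_{\ell,k}$ to $|\cP_{\ell,k}|$ at relative cost $O(k^2/\ell)$.

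\emph{Remaining symmetric networks.} A symmetric network not in $S_1$ has an involution by Lemma~\ref{lem:autbound}, since $\Aut(N)$ is a $2$-group. That involution has a component $\beta$ of a type other than the twin pair of $\Gamma_1$, so $F-2\lambda\le-4$. I sum the bound of Lemma~\ref{lem:blockcount} over configuration types $B$. Using $E_{k-\lambda}\le C\ell$, $2k\le\ell$, $|\cP^*_{\ell,k-\lambda}|\le C\,TC_{\ell,k-\lambda}$ by Lemma~\ref{lem:nontc}, and the ratio bound $TC_{\ell,j-1}/TC_{\ell,j}\le Cj/\ell^2$ iterated $\lambda$ times, a type contributes at most
\[
2^{4|B|}\,(Ck)^{\lambda}\,\ell^{F+p-2\lambda}\,TC_{\ell,k}.
\]
A type with $p\ge1$ still gains, because each such splitter feeds a reticulation counted in $\lambda$ and costs only $2k$ rather than $\ell$. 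I expect to check that $F+p-2\lambda\le-4$ holds whenever $(F,\lambda)$ is not the $\Gamma_1$ case. Here $|B|=O(\lambda)$ and $\lambda\le k$ by Lemma~\ref{lem:blockid}. The number of types with given $\lambda$ is at most $\lambda^{O(\lambda)}$, which is absorbed for $k\le\ell^{1/100}$. The resulting sum is $O(k^{c}\ell^{-4})\,TC_{\ell,k}$, a relative error $O(k^6/\ell)$ against the main term.

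\emph{Main obstacle and defect.} The main obstacle is the combinatorial sum over types. It requires controlling the number of isomorphism types of $B$ with given $\lambda$, and checking that splitters counted in $p$ do not spoil the exponent. I would first bound types crudely by $2^{O(|B|^2)}$ with $|B|\le C\lambda$, and restrict to $\lambda\le\log\ell$. For $\lambda>\log\ell$ the factor $\ell^{-\lambda}$ gives trivial control.

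For the defect, a network in $S_1$ with no other symmetry has $\Aut(N)\cong\mathbb{Z}_2$, so it contributes $1-\tfrac12=\tfrac12$. This halves the probability and gives $k(k-1)/(8\ell^3)$. The networks with larger groups all lie among the remaining symmetric networks (or have two disjoint $\Gamma_1$), so they are part of the error term. This holds because $0\le1-|\Aut(N)|^{-1}\le1$.
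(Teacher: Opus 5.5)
Your outline follows the paper's proof. You count the networks containing $\Gamma_1$ by contraction and expansion, bound every other involution component through Lemma~\ref{lem:blockcount}, and halve the main term for the defect. The gap is in the exponent for the non-$\Gamma_1$ types.

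You bound $(2k)^p\le\ell^p$ and then need $F+p-2\lambda\le-4$, but this inequality is false. Every splitter is counted in exactly one of $F$ and $p$, so $F+p=z$. Lemma~\ref{lem:blockid} then gives $F+p-2\lambda=-z-2t_H-2c_r$, which equals $-2$ when $t_H=c_r=0$ and $z=2$. The twin reticulations $\Gamma_2$ with entries from two distinct tree nodes are such a type: $z=p=2$, $F=0$, $s=m_r=1$, $\lambda=2$. Your bound gives this type a contribution of order $k^2\ell^{-2}\,TC_{\ell,k}$. That is a factor of order $\ell$ larger than the main term, so the argument does not close.

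The remedy is to keep $(2k)^p$ as a power of $k$. A splitter counted in $p$ feeds a reticulation that survives in $R$, so it is located among at most $k$ reticulations, not among $E_{k-\lambda}$ edges. A type then contributes at most $2^{4|B|}(Ck)^{p+\lambda}\ell^{F-2\lambda}\,TC_{\ell,k}$. The inequality you actually need is $F-2\lambda\le-4$, which Lemma~\ref{lem:blockcount} already states. For $\lambda\le4$ this gives the paper's bound $Ck^8\ell^{-4}$, a relative error $O(k^6/\ell)$. For $\lambda\ge5$, use $F\le\lambda$ to obtain a geometric series in $(Ck)^{c}/\ell$. Its sum is $O(\ell^{-1/2})$ relative to the main term for $k\le\ell^{1/100}$, which the stated error allows; it is not $O(k^6/\ell)$ as you claim.

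The crude type count also fails. The bound $2^{O(|B|^2)}=2^{O(\lambda^2)}$ outgrows $\ell^{\lambda}$ once $\lambda$ is of order $\log\ell$, so under it the range $\lambda>\log\ell$ is not trivially controlled. Use the bounded-degree count $(C\lambda)^{O(\lambda)}\le(Ck)^{O(\lambda)}$, which you also mention, for every $\lambda$. The paper uses $(Ck)^{24\lambda}$, from $|B|\le8\lambda$ and at most three neighbours per vertex.

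Two smaller corrections. First, in the main term the failing expansions are misidentified. Expanding the tail of a parallel pair of $N'$ is harmless, since it produces a crossed cherry with merged exits. The expansions that fail are those at the other tree nodes of some $N'\in\cP^*_{\ell,k-2}\setminus\cP_{\ell,k-2}$. By Lemma~\ref{lem:nontc} these cost a relative $O(k^2/\ell)$, so your estimate for $|S_1|$ survives. Second, define the error set as the networks admitting an involution with a non-$\Gamma_1$ component, not as the symmetric networks outside $S_1$. Lemma~\ref{lem:blockcount} bounds this set. Together with the networks containing two copies of $\Gamma_1$, it contains the networks of $S_1$ with $|\Aut(N)|\ge4$ that your defect step must exclude.
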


\begin{proof}
By Lemma~\ref{lem:autbound}, a nontrivial group contains an involution, and every one of its components falls under Lemma~\ref{lem:blockcount}. For $k\le c_1\sqrt\ell$, Lemma~\ref{lem:nontc} and the second estimate following \eqref{eq:TCratio} give $|\cP^*_{\ell,k-\lambda}|\le(Ck/\ell^2)^{\lambda}|\cP_{\ell,k}|$.

Consider first the configurations other than $\Gamma_1$ with $\lambda\le4$. Their types form a finite set, independent of $k$, and $p\le\lambda$. By Lemma~\ref{lem:blockcount} they account for at most $Ck^{2\lambda}\ell^{F-2\lambda}|\cP_{\ell,k}|\le Ck^8\ell^{-4}|\cP_{\ell,k}|$ networks.

Consider next the configurations with $\lambda\ge5$. Here $|B|\le2|H|+z+s\le8\lambda$ and $\lambda\le k$. The number of types with $|B|\le8\lambda$ is at most $(Ck)^{24\lambda}$, since every vertex has at most three neighbours inside $B$. Using $F\le\lambda$, these configurations account for at most
\[
\sum_{\lambda\ge5}(Ck)^{24\lambda}C^{\lambda}k^{\lambda}\ell^{\lambda}\Bigl(\frac{Ck}{\ell^2}\Bigr)^{\lambda}|\cP_{\ell,k}|\le\sum_{\lambda\ge5}\Bigl(\frac{(Ck)^{26}}{\ell}\Bigr)^{\lambda}|\cP_{\ell,k}|
\]
networks. For $k\le\ell^{1/100}$ and $\ell$ large this is at most $2(Ck)^{130}\ell^{-5}|\cP_{\ell,k}|$. Relative to $k^2\ell^{-3}|\cP_{\ell,k}|$ this is $O(k^{128}\ell^{-2})=O(\ell^{-1/2})$. Networks whose involutions have two or more components are bounded in the same way, by applying the deletion to each component.

The networks with a $\Gamma_1$ component are obtained by expanding a tree node $u$ of some $N'\in\cP^*_{\ell,k-2}$, in at most $(\ell+k-3)|\cP^*_{\ell,k-2}|$ ways. Conversely, every pair of an asymmetric $N'\in\cP_{\ell,k-2}$ and one of its $\ell+k-3$ tree nodes gives such a network. Distinct pairs give distinct networks, except for networks with two components, which were bounded above. Hence their number is $\ell\,|\cP_{\ell,k-2}|(1+O(k^2/\ell))$, and their group has order two except on a set already bounded. Finally, by Lemma~\ref{lem:nontc} and \eqref{eq:TCratio},
\[
\frac{\ell\,|\cP_{\ell,k-2}|}{|\cP_{\ell,k}|}=\frac{k(k-1)}{4\ell^3}\Bigl(1+O\bigl(\ell^{-1/2}+\tfrac{k^2}{\ell}\bigr)\Bigr).
\]
The non-$\Gamma_1$ contributions are smaller by the relative factor $O(k^6/\ell)$. Each network with a group of order two contributes $\frac12$ to $D_k$.
\end{proof}

For $k=2,3$, Theorem~\ref{thm:symlaw} agrees with the exact defects of Theorem~\ref{thm:defects}. The exponent $1/100$ results from the crude count of configuration types and has not been optimised.

\section{Networks with a distinguished involution}\label{sec:symrec}

By Lemma~\ref{lem:exponent2}, for at most five reticulations every nontrivial automorphism is an involution. This reduces the defect to a weighted count of networks with a distinguished involution.

\begin{corollary}\label{cor:Dinv}
For $k\le5$,
\begin{equation}\label{eq:Dinv}
D_k(\ell)=\sum_{N\in\cP_{\ell,k}}\frac{\#\{\sigma\in\Aut(N):\sigma^2=\mathrm{id}\ne\sigma\}}{|\Aut(N)|}.
\end{equation}
By the orbit--stabiliser argument of Lemma~\ref{lem:vertexlabelled}, $D_k(\ell)$ equals $\ell!/n!$ times the number of vertex-labelled networks with $n=2\ell+2k-1$ vertices equipped with a nontrivial involution of the internal labels that is an automorphism.
\end{corollary}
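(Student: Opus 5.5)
The plan is to derive \eqref{eq:Dinv} from the group structure of Lemma~\ref{lem:exponent2}, and then to obtain the vertex-labelled reformulation by a weighted orbit count of the same kind as in the proof of Lemma~\ref{lem:vertexlabelled}.

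For the first identity, fix $N\in\cP_{\ell,k}$ with $k\le5$. By Lemma~\ref{lem:exponent2}, $\Aut(N)\cong\mathbb{Z}_2^m$ for some $m\ge0$. In an elementary abelian $2$-group every non-identity element is an involution, so the number of nontrivial involutions is $|\Aut(N)|-1$. Dividing by $|\Aut(N)|$ gives exactly $1-|\Aut(N)|^{-1}$, which is the contribution of $N$ to $D_k(\ell)$ in \eqref{eq:WD}. Summing over $N$ gives \eqref{eq:Dinv}. Networks with trivial group contribute $0$ on both sides.

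For the second statement, count pairs $(\Lambda,\sigma)$, where $\Lambda$ is a vertex labelling of $N^\circ$ and $\sigma$ is a nontrivial involutive automorphism of $N$. Each such pair gives a vertex-labelled graph together with the permutation $\Lambda\sigma\Lambda^{-1}$ of the internal labels. Leaves are fixed, so this permutation acts only on internal labels. The object "vertex-labelled graph with an involution of labels that is an automorphism" is recovered from the pair $(\Lambda,\sigma)$.

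As in Lemma~\ref{lem:vertexlabelled}, two pairs $(\Lambda,\sigma)$ and $(\Lambda',\sigma')$ give the same labelled object exactly when $\Lambda'=\Lambda\tau$ and $\sigma'=\tau^{-1}\sigma\tau$ for some $\tau\in\Aut(N)$. The group $\Aut(N)$ acts freely on labellings, so it acts freely on these pairs. Hence the number of labelled objects coming from $N$ is
\[
\binom{n}{\ell}(n-\ell)!\cdot\frac{\#\{\sigma\}}{|\Aut(N)|}=\frac{n!}{\ell!}\cdot\frac{\#\{\sigma\}}{|\Aut(N)|}.
\]
Summing over $N$ and multiplying by $\ell!/n!$ gives $D_k(\ell)$ by \eqref{eq:Dinv}.

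The only delicate step is the bookkeeping in the identification, namely that conjugation by $\tau$ is the correct equivalence on the second coordinate. For $k\le5$ the group is abelian, so $\sigma'=\sigma$ and the count is immediate. This step is where commutativity is used a second time, and I would check it first.
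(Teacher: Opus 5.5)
Your proposal is correct and follows the paper's argument: the first identity is exactly the paper's observation that every nonidentity element of $\mathbb{Z}_2^m$ is an involution, and your free-action count on pairs $(\Lambda,\sigma)$ spells out the orbit--stabiliser step that the paper only cites from Lemma~\ref{lem:vertexlabelled}. One correction to your closing remark: commutativity is not needed in that step, because conjugation by $\tau$ permutes the nontrivial involutions and freeness on the labelling coordinate alone gives orbits of size $|\Aut(N)|$, so the labelled identity holds for every $k$ and only its identification with $D_k(\ell)$ uses $k\le5$.
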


\begin{proof}
In $\mathbb{Z}_2^m$ every nonidentity element is an involution, so $1-|\Aut(N)|^{-1}$ equals the number of involutions divided by $|\Aut(N)|$.
\end{proof}

For $k\ge6$ the identity \eqref{eq:Dinv} fails in general: the group of Example~\ref{ex:dihedral} has order eight but contains only five involutions.

The right-hand side of \eqref{eq:Dinv} counts networks with a distinguished involution. Such pairs admit a layer-peeling recursion of the type of Theorem~\ref{thm:recursion}, and the present section derives it.

\begin{definition}\label{def:invstruct}
Let $a_f,a_s\ge0$ with $a=a_f+2a_s\ge1$, and let $\rho$ be the involution of $[a]$ that fixes $1,\dots,a_f$ and exchanges $a_f+2i-1$ with $a_f+2i$ for $1\le i\le a_s$. An involution of an $a$-rooted structure $S$ is an incidence-preserving involution of its vertices and edges that fixes every leaf and acts on the roots as $\rho$. For such a pair $(S,\sigma)$, let $P_f$ be the number of parallel pairs whose two edges are fixed by $\sigma$, and $P_x$ the number of parallel pairs with fixed endpoints whose two edges are exchanged. Let $P_s$ be the number of two-element orbits of parallel pairs, and $w$ the number of two-element orbits of $\sigma$ on edges. Define
\[
Q^\sigma_{a_f,a_s}(\ell,m;P_f,P_x,P_s;w)=\sum_{S}\frac{\#\{\sigma\}}{|\Aut_e(S)|},
\]
where $S$ runs over isomorphism classes of $a$-rooted structures with $\ell$ leaves and $m$ reticulations, and $\#\{\sigma\}$ is the number of involutions of $S$ with the given parameters. Set $Q^\sigma_{0,0}(\ell,m;P;w)=[\ell=m=0,\,P=0,\,w=0]$.
\end{definition}

For $a_f=1$, $a_s=0$ and $P=0$ the structures are networks, and an involution is an element $\sigma\in\Aut(N)$ with $\sigma^2=\mathrm{id}$. Moreover $w=0$ exactly when $\sigma=\mathrm{id}$. Hence
\begin{equation}\label{eq:QsigmaTop}
Q^\sigma_{1,0}(\ell,k;0,0,0;0)=W_k(\ell),\qquad
\sum_{w\ge1}Q^\sigma_{1,0}(\ell,k;0,0,0;w)=\sum_{N\in\cP_{\ell,k}}\frac{\#\{\sigma:\sigma^2=\mathrm{id}\ne\sigma\}}{|\Aut(N)|},
\end{equation}
and for $k\le5$ the second sum is $D_k(\ell)$ by Corollary~\ref{cor:Dinv}.

An involution $\sigma$ of $S$ maps the layer $\Lambda(S)$ onto itself, because it maps tree components to tree components and roots to roots. It therefore permutes the closed reticulations and the open reticulations, and it restricts to an involution $\sigma'$ of the remainder that permutes its roots as it permutes the closed reticulations. The closed reticulations fall into three types. There are $c_f$ fixed ones whose two slots are fixed, $c_x$ fixed ones whose two slots are exchanged (seams of the layer), and $c_s$ two-element orbits. The open reticulations comprise $b_f$ fixed ones and $b_s$ two-element orbits. We write $a'=c_f+c_x+2c_s$ and $b'=b_f+2b_s$. The slots of the layer forest are then $2c_f+b_f$ fixed slots and $q=c_x+2c_s+b_s$ pairs of exchanged slots.

Write $t_m=(2m-3)!!$ for $m\ge1$, and
\[
G_R(u)=\sum_{m\ge1}\frac{2^m t_m}{m!}u^m,\qquad G_F(u)=\sum_{m\ge1}\frac{2^{m-1}t_m}{m!}u^m .
\]

\begin{lemma}\label{lem:invforest}
Consider forests with $a$ labelled roots, permuted by $\rho$, and with $n_0$ labelled fixed leaves and $q$ labelled pairs of leaves exchanged by a prescribed involution. The number of such forests invariant under an involution that extends these data, and which have exactly $\varphi$ fixed vertices whose two children are exchanged (forks), is
\begin{equation}\label{eq:invforest}
\Lambda_\varphi=a_f!\,F_{n_0+\varphi,\,a_f}\cdot\frac{q!}{\varphi!}\,[u^q]\,G_R(u)^{a_s}\,G_F(u)^{\varphi}.
\end{equation}
Each such forest has $2q-a_s-\varphi$ two-element orbits of edges.
\end{lemma}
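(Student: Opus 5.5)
I would decompose an invariant forest into its \emph{fixed part} and its \emph{moved part} and count each separately. Let $\sigma$ be the involution of the forest, and call a vertex fixed or moved according to $\sigma$.

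\textbf{Step 1: structure of the fixed set.} The fixed vertices form a union of subtrees hanging from the $a_f$ fixed roots; they are closed upwards, since the parent of a fixed vertex is fixed. No moved vertex lies above a fixed one, so the trees of the $a_s$ pairs of exchanged roots are entirely moved. A fixed internal vertex has either both children fixed, or two children exchanged by $\sigma$; the latter are exactly the forks. Cutting each fork's two out-edges and replacing them by a new fixed leaf turns the fixed part into a forest with $a_f$ labelled roots and $n_0+\varphi$ leaves. The fork leaves must be distinguishable, so I would label them $1,\dots,\varphi$ and divide by $\varphi!$ at the end. This part gives the factor $a_f!\,F_{n_0+\varphi,a_f}$.

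\textbf{Step 2: the moved part.} It consists of orbits $\{T,\sigma T\}$ of subtrees of two kinds. Each of the $a_s$ root pairs carries two mirror trees. Each fork carries two mirror subtrees. Every exchanged leaf pair is distributed among these $a_s+\varphi$ blocks: a block receiving $m\ge1$ pairs has $m$ leaves on each side.
\begin{itemize}
\item For a root-pair block, the tree below one root is an arbitrary binary tree on $m$ leaves, in $t_m$ ways. Each pair contributes a choice of which of its two leaves sits below the first root, giving $2^m$. The block's exponential generating function is therefore $G_R$.
\item For a fork block, the two children are unordered, so swapping sides gives the same forest. This halves the count to $2^{m-1}t_m$ and gives $G_F$.
\end{itemize}
The blocks are then combined by the labelled product over the $q$ labelled pairs. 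Root-pair blocks are distinguishable through the root labels. Fork blocks carry the temporary labels from Step 1, whence $\frac{q!}{\varphi!}[u^q]G_R^{a_s}G_F^{\varphi}$. One must check that every block is nonempty, so that $m\ge1$ and the series start at $u^1$. Fork blocks are nonempty because a fork's children are moved and each subtree ends in leaves. Root pairs are nonempty because each tree has leaves.

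\textbf{Step 3: uniqueness of $\sigma$.} An involution extending the leaf data is unique when it exists. The forest is a tree-like structure, and $\sigma$ is determined bottom-up from the leaves, so the count of forests equals the count of pairs (forest, involution).

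\textbf{Step 4: edge orbits.} Moved edges come in two-element orbits, namely the edges with a moved head. A mirror subtree with $m$ leaves has $2m-1$ internal-plus-leaf edges, counting its top edge.
\begin{itemize}
\item For a root pair, the root edge is included, giving $2m-1$ edges per side.
\item For a fork, the edge from the fork is included, giving $2m-1$ edges per side.
\end{itemize}
Summing $2m-1$ over the blocks gives $2q-(a_s+\varphi)$ orbits.

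The main obstacle I expect is the symmetry bookkeeping: getting the factor $2^{m-1}$ versus $2^m$ right, and the $1/\varphi!$ for fork labels. It helps that a fork's two subtrees are unordered while root pairs are ordered by labels.
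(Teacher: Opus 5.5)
Your proposal is correct and follows essentially the same route as the paper. Both arguments use uniqueness of the extending involution, contract the fixed part at the forks to a forest counted by $a_f!\,F_{n_0+\varphi,a_f}$, and attach mirror blocks weighted by $G_R$ and $G_F$ through the exponential formula. Your explicit $1/\varphi!$ for the fork labels is equivalent to the paper's unordered fork blocks, and both count $2m-1$ edge orbits per block.
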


\begin{proof}
A leaf-labelled forest with labelled roots has no nontrivial automorphism fixing its leaves and roots. An extending involution is therefore unique if it exists, and the lemma counts invariant forests.

A vertex whose image is a different vertex has no fixed leaf below it in its own subtree, since fixed leaves have fixed ancestors along their unique root path. The trees at two exchanged roots are therefore images of each other and carry only exchanged leaves. Such a pair is determined by one tree on a set $A$ that contains one leaf from each of $m\ge1$ pairs; there are $2^m$ choices of $A$ and $t_m$ trees on $A$.

In a tree with a fixed root, the fixed vertices form a subtree containing the root. A fixed vertex has either two fixed children or two exchanged children, because its children are permuted among themselves. In the second case it is a fork, and its two subtrees are images of each other, carrying exchanged leaves only. The fork is determined by the unordered pair $\{A,\sigma(A)\}$ and a tree on $A$, giving $2^{m-1}t_m$ possibilities for $m$ pairs.

Contracting each fork with its two subtrees to a single leaf leaves a forest with $a_f$ labelled roots on the $n_0$ fixed leaves and the $\varphi$ fork leaves. There are $a_f!\,F_{n_0+\varphi,a_f}$ such forests. The $q$ pairs are distributed into $a_s$ labelled nonempty blocks for the exchanged root pairs and $\varphi$ unordered nonempty blocks for the forks, which gives the exponential-formula coefficient in \eqref{eq:invforest}.

An edge is moved exactly when its head is moved. A block of $m$ pairs consists of two exchanged trees with $2m-1$ non-root vertices each, giving $2m-1$ two-element orbits of edges. Summing over the $a_s+\varphi$ blocks gives $2q-a_s-\varphi$.
\end{proof}

A parallel pair of the layer arises as a cherry formed by the two slots of one closed reticulation. For a reticulation of type $c_f$ the cherry lies among the fixed vertices and has both edges fixed. For type $c_x$ it is a fork with one pair, and its edges are exchanged. For type $c_s$ it lies inside a block, with its image cherry in the mirror subtree. Prescribing $s_f$, $s_x$ and $s_s$ such cherries amounts to the following replacements:
\begin{itemize}
\item contracting $s_f$ fixed cherries, so that $n_0$ decreases by $s_f$;
\item reserving $s_x$ forks with one pair each, so that $q$ decreases by $s_x$ and $s_x$ fork leaves are added;
\item merging $s_s$ couples of pairs into single pairs, so that $q$ decreases by $s_s$.
\end{itemize}
The number of two-element edge orbits remains $2q-a_s-\varphi_{\rm tot}$ with the original $q$, where $\varphi_{\rm tot}$ counts all forks. Inclusion–exclusion over the prescribed sets then gives the number $\Lambda^{\rm ex}(p_f,p_x,p_s;w_L)$ of invariant layer forests with exactly $p_f$, $p_x$ and $p_s$ cherries of the three types and $w_L$ two-element edge orbits.

\begin{lemma}\label{lem:eqplace}
Let $(R,\sigma')$ have $E$ edges, $w$ two-element edge orbits and $P_f,P_x,P_s$ parallel pairs of the three types. Place $b_f$ labelled points on fixed edges and $b_s$ labelled pairs of points on two-element orbits, one point of each pair on each edge of the orbit, each point subdividing its edge. The number of such placements in which prescribed $q_f$, $q_x$ and $q_s$ parallel pairs of the respective types receive no point, and all others receive at least one, is $\Pi_f\Pi_s$, where
\begin{align*}
\Pi_f&=\binom{P_f}{q_f}\sum_{i}(-1)^i\binom{P_f-q_f}{i}\rise{(E-2w-2q_f-2i)}{b_f},\\
\Pi_s&=2^{b_s}\binom{P_x}{q_x}\binom{P_s}{q_s}\sum_{i_1,i_2}(-1)^{i_1+i_2}\binom{P_x-q_x}{i_1}\binom{P_s-q_s}{i_2}\rise{(w-q_x-2q_s-i_1-2i_2)}{b_s}.
\end{align*}
\end{lemma}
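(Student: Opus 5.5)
The plan is to reduce the equivariant placement to two independent ordinary placements, one on the fixed edges and one on the orbit edges, and then to run the inclusion–exclusion of \eqref{eq:placement} separately in each. First I will record how $\sigma'$ splits the edges of $R$. Since $\sigma'$ is an involution of edges, the $E$ edges split into $2w$ edges lying in two-element orbits and $E-2w$ fixed edges. A point placed on a fixed edge must be fixed by the extended involution. A pair of points exchanged by the involution must sit on the two edges of one orbit, at corresponding positions. The placements in the statement are therefore pairs consisting of a placement of $b_f$ labelled points on the fixed edges and a placement of $b_s$ labelled point pairs on the $w$ orbits. The two kinds of point do not interact: a fixed point never subdivides a moved edge, and conversely. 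The total count is therefore the product of the two counts, provided the parallel-pair conditions also separate. They do. A pair of type $P_f$ consists of two fixed edges. Pairs of type $P_x$ form a single orbit of two edges. Pairs of type $P_s$ occupy two orbits, each containing one edge from each of the two exchanged pairs.

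For $\Pi_f$ I will repeat the argument of Theorem~\ref{thm:recursion} on the $E-2w$ fixed edges. Placing $b$ labelled points successively, each subdividing an edge, gives $\rise{(E-2w)}{b}$ ways. Forbidding a set of edges of size $f$ gives $\rise{(E-2w-f)}{b}$. Inclusion–exclusion over the $P_f$ fixed parallel pairs then gives the formula for $\Pi_f$: keep a prescribed set of $q_f$ pairs empty, which removes $2q_f$ edges, and require each of the other pairs to receive a point, which contributes the alternating sum over $i$ sets of removed pairs, $2i$ edges.

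For $\Pi_s$ I will treat each orbit as a single ``edge'' of a quotient structure with $w$ edges. A point pair on an orbit is determined by its position on one representative edge, and the mirror point is forced. Successive insertion of $b_s$ labelled pairs therefore gives $\rise{w}{b_s}$ ways, because inserting a pair subdivides both edges of its orbit and so increases the number of orbits by one. Each pair is labelled, but the statement leaves open which of its two points sits on which edge of the orbit. This choice gives the factor $2^{b_s}$, since the points of a pair are distinguishable in the labelled setting: each open reticulation of type $b_s$ contributes two labelled slots. A $P_x$ pair is one orbit, so keeping it empty removes one quotient edge. A $P_s$ orbit of pairs is two orbits, so keeping it empty removes two quotient edges. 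Inclusion–exclusion over the two families independently yields the double alternating sum with argument $w-q_x-2q_s-i_1-2i_2$, together with the binomial coefficients that choose the prescribed sets.

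The main obstacle I expect is justifying the factor $2^{b_s}$ and the insertion count $\rise{w}{b_s}$ together: I must check that a subdivided orbit stays one orbit per new edge pair, and that orientation choices are not double counted against $\Aut_e$. I would check this first on $b_s=1$, $w=1$. There should be exactly two placements, each point going to either edge of the orbit, and a parallel pair of type $P_x$ is broken in both.
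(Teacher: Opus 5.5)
Your proposal is correct and follows essentially the same route as the paper. You split the edges into the $E-2w$ fixed edges and the $w$ two-element orbits, and note that fixed points and exchanged pairs never interact: a fixed point leaves $w$ unchanged, and a pair raises $w$ by one and leaves $E-2w$ unchanged. From there you obtain the rising factorials with a factor $2$ per pair, and run inclusion--exclusion separately on the $f$-pairs and on the $x$- and $s$-orbits, which remove one and two orbits respectively.

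The only thing the paper adds is a justification you assert without proof: two exchanged points cannot share a single fixed edge, because the involution would reverse their order along it. Since the lemma already prescribes where points and pairs go, this matters for its use in Theorem~\ref{thm:symrec} rather than for the count itself.
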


\begin{proof}
A fixed point lies on a fixed edge and keeps it fixed after subdivision. A pair of exchanged points lies on a two-element orbit, in $2$ ways, and subdividing the two edges increases the number of such orbits by one. A pair of points on a single fixed edge is impossible: their order along the edge would be reversed by the involution. Hence fixed points see $E-2w$ edges and pairs of points see $w$ orbits, and the sequential counts are the rising factorials. A parallel pair of type $f$ consists of two fixed edges and is broken only by fixed points. A pair of type $x$ is one two-element orbit, broken only by a pair of points. An orbit of type $s$ consists of two two-element orbits of edges, and is broken by a pair of points on either of them. Inclusion–exclusion over the unbroken ones gives the formulas.
\end{proof}

\begin{theorem}\label{thm:symrec}
For $a\ge1$,
\begin{multline}\label{eq:symrec}
Q^\sigma_{a_f,a_s}(\ell,m;P;w)=\sum_{j=0}^{\ell}\binom{\ell}{j}\sum_{c_f,c_x,c_s,b_f,b_s}\frac{1}{c_f!\,c_x!\,c_s!\,b_f!\,b_s!\,2^{b_s}\,2^{a'}}\\
\times\sum\Lambda^{\rm ex}(p;w_L)\;Q^\sigma_{c_f+c_x,\,c_s}(\ell-j,m';P^R;w_R)\;\Pi_f\,\Pi_s .
\end{multline}
Here $m'=m-a'-b'\ge0$, and $\Lambda^{\rm ex}$ is taken with $n_0=j+2c_f+b_f$ and $q=c_x+2c_s+b_s$. The inner sum runs over $p+q=P$ componentwise and $w_L+w_R+b_s=w$. Finally, $\Pi_f\Pi_s$ is evaluated with $E=2(\ell-j)+3m'-a'$, the parallel pairs $P^R$ of the remainder, $b_f$ fixed points and $b_s$ pairs of points.
\end{theorem}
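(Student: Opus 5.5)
The plan is to mirror the proof of Theorem~\ref{thm:recursion} equivariantly. Given a pair $(S,\sigma)$, I decompose $S$ into layer forest, remainder $R$ and placement exactly as there. I then check that $\sigma$ induces compatible involutions on each piece. As noted before Lemma~\ref{lem:invforest}, $\sigma$ preserves $\Lambda(S)$ and permutes the closed and open reticulations, and these fall into the five types $c_f,c_x,c_s,b_f,b_s$.

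On the layer forest, $\sigma$ acts as an involution. It permutes the roots by $\rho$, fixes the $j$ layer leaves and the $2c_f+b_f$ slots of fixed type, and exchanges the $q=c_x+2c_s+b_s$ slot pairs. On the remainder, it restricts to $\sigma'$. The roots of $R$ are the closed reticulations, and I label them so that the fixed ones come first and the exchanged ones come in consecutive pairs; this puts $\sigma'$ in the normal form of Definition~\ref{def:invstruct} with parameters $(c_f+c_x,c_s)$. The open reticulations become points on edges of $R$. A fixed one lies on a fixed edge of $R$, and an exchanged pair lies on a two-element edge orbit, one point on each edge. This is exactly the situation of Lemma~\ref{lem:eqplace}.

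For reversibility, I take an invariant layer forest, an involutive remainder and an equivariant placement, and glue them as in Theorem~\ref{thm:recursion}. The involutions on the pieces then assemble to an involution of $S$. The step to verify is that the auxiliary labellings are made compatible with $\sigma$. Closed reticulations of each type are labelled separately, and exchanged ones are labelled as ordered pairs. The same holds for open ones, with the order inside a $b_s$ pair fixed by which point lies on which edge.

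For the parameters, parallel pairs of $S$ come either from cherries of the layer, of types $f,x,s$ as listed before Theorem~\ref{thm:symrec}, or from parallel pairs of $R$ that stay unbroken. This gives the componentwise splitting $P=p+q$. Two-element edge orbits of $S$ come from three sources:
\begin{itemize}
\item $w_L$ orbits inside the layer, counted by Lemma~\ref{lem:invforest} together with the inclusion--exclusion $\Lambda^{\rm ex}$;
\item $w_R$ orbits of $R$, where each exchanged point pair splits one orbit of $R$ into two;
\item the $b_s$ new edges from the open tails to the reinserted reticulations, each pair forming one orbit.
\end{itemize}
Accounting carefully for how subdivision changes the orbit counts should yield $w=w_L+w_R+b_s$. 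The edge count $E$ of $R$ is given by \eqref{eq:edges}.

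For the weights, I use the orbit--stabiliser argument of Theorem~\ref{thm:recursion}, now applied to triples consisting of a rigid decoration, the structure and a distinguished involution. $\Aut_e(S)$ acts freely on rigid decorations, so $\#\{\sigma\}/|\Aut_e(S)|$ equals the number of rigidly decorated pairs divided by $N_S!\,2^{P}$. The auxiliary labels are acted on freely by $S_{c_f}\times S_{c_x}\times S_{c_s}\times S_{b_f}\times S_{b_s}$. There is also a factor $\mathbb{Z}_2$ for the order within each exchanged pair $c_s$ and $b_s$; this produces the $2^{b_s}$, while the $2^{c_s}$ is absorbed with the slot orders. Finally, $\mathbb{Z}_2^{a'}$ acts on the slot orders, giving $2^{a'}$. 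As in the unmarked case, cherries produce the parallel-pair factor $\tfrac12$ carried by $\omega$.

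I expect the main obstacle to be this bookkeeping of the powers of $2$. Exchanged slot pairs, which slot of a $c_x$ reticulation goes where, and the orientation choice $2^{b_s}$ already built into $\Pi_s$ all interact, and must be matched against the $2^{P}$ normalisation across the three parallel-pair types. I would fix these factors by checking the formula on small cases: $\Gamma_1$ with $\ell=1$, which must give $D_2(2)=\tfrac12$, and the $k=3$ defects of Theorem~\ref{thm:defects}.
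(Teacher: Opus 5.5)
Your framework is the paper's: the layer decomposition applied equivariantly to decorated pairs, the rigid-decoration identity for $\#\{\sigma\}/|\Aut_e(S)|$, and a free group action on the auxiliary labels. Restricting to normal-form labellings and dividing by the centraliser, of order $c_f!\,c_x!\,c_s!\,2^{c_s}\,b_f!\,b_s!\,2^{b_s}\,2^{a'}$, is equivalent to the paper's route: it counts the $a'!/(c_f!\,c_x!\,c_s!\,2^{c_s})$ and $b'!/(b_f!\,b_s!\,2^{b_s})$ admissible involutions $\pi_c,\pi_o$ and divides by $a'!\,b'!\,2^{a'}$. The two verifications that make up the theorem, however, are not carried out correctly.

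First, your edge-orbit count double counts. The edges from the open tails into the reinserted reticulations are slot edges of the layer forest, so they are already in $w_L$. The number $2q-a_s-\varphi$ of Lemma~\ref{lem:invforest} counts the two-element orbits of all moved edges of the forest, including the edges entering exchanged slots. The extra $b_s$ comes only from subdivision: a pair of points on an orbit $\{e,\sigma'(e)\}$ of $R$ turns it into two orbits. Your three sources, as listed, add up to $w_L+w_R+2b_s$.

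Second, the powers of two are asserted and then left to calibration on small cases. Calibration cannot prove the identity. The cases you name cannot even detect an error in the $b_s$ factors. At any level of the recursion, an exchanged pair of open reticulations needs at least four reticulations: the pair itself, the reticulation rooting the tree component of a parent outside the layer, and a seam below the pair. So all terms with $b_s\ge1$ vanish for $k\le3$. Two arguments are needed instead.
\begin{itemize}
\item For a two-cycle $\{r,r'\}$ of closed reticulations, the auxiliary labels do not determine which slot of $r$ is sent to which slot of $r'$. The two possibilities are interchanged by reversing the slot order of $r'$, so they give equally many invariant layer forests. Together they produce the factor $2^{c_s}$ that cancels the pair-order factor of the centraliser. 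Your phrase that $2^{c_s}$ is ``absorbed with the slot orders'' has to be turned into this statement.
\item For an exchanged pair of open reticulations, the order within the pair must be free auxiliary data. Swapping it interchanges the two orientations counted by the factor $2^{b_s}$ of $\Pi_s$, so both that factor and the $2^{-b_s}$ of the prefactor are needed. Your remark that this order is ``fixed by which point lies on which edge'' describes the opposite convention, in which both factors disappear together. Mixing the two conventions produces a wrong power of two.
\end{itemize}
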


\begin{proof}
The proof has three steps: an expression of $Q^\sigma$ through rigid decorations, the equivariance of the layer decomposition, and the count of auxiliary labellings.

A rigid decoration of a structure $S$ is a bijective labelling of its $N_S$ internal vertices together with an order of the two edges of every parallel pair; $N_S$ and the number $p=P_f+P_x+2P_s$ of parallel pairs are fixed by the state. Fix $S$ and an involution $\sigma$ of $S$. For every rigid decoration $\delta$ there is exactly one permutation of the decoration data that realises $\sigma$ through $\delta$. The group $\Aut_e(S)$ acts freely on the rigid decorations, so their number is $N_S!\,2^p/|\Aut_e(S)|$. Hence the number of decorated pairs, consisting of a rigid decoration and a permutation realising an involution with the prescribed parameters, is $N_S!\,2^p\,\#\{\sigma\}/|\Aut_e(S)|$. Summing over $S$ and dividing by $N_S!\,2^p$ gives $Q^\sigma$.

The decomposition of Theorem~\ref{thm:recursion} applies to a decorated pair. It produces a decorated layer forest, a decorated remainder, a placement, and an auxiliary labelling. The auxiliary labelling consists of labellings of the closed and open reticulations by $[a']$ and $[b']$ and an order of the two slots of every closed reticulation. As in that proof, the correspondence is bijective and the group $S_{a'}\times S_{b'}\times\mathbb{Z}_2^{a'}$ acts freely on the auxiliary data. The involution induces an involution $\pi_c$ of $[a']$ with $c_f+c_x$ fixed points and $c_s$ two-cycles, an involution $\pi_o$ of $[b']$ with $b_f$ fixed points and $b_s$ two-cycles, and a correspondence between slots.

Fix the type $(c_f,c_x,c_s,b_f,b_s)$. The data $\pi_c$, together with the choice of the $c_x$ fixed points whose slots are exchanged, can be chosen in $a'!/(c_f!\,c_x!\,c_s!\,2^{c_s})$ ways. The involution $\pi_o$ can be chosen in $b'!/(b_f!\,b_s!\,2^{b_s})$ ways. For each two-cycle $\{r,r'\}$ of $\pi_c$ the induced bijection between the slots of $r$ and those of $r'$ is one of two. Reversing the slot order of $r'$ interchanges the two choices, so both give the same number of invariant layer forests, and together they contribute a factor $2^{c_s}$. For a fixed choice of these data, the layer is counted by Lemma~\ref{lem:invforest} with the cherry inclusion--exclusion. The remainder is counted by $Q^\sigma_{c_f+c_x,c_s}$, since by relabelling its count depends only on the type of $\pi_c$. The placement is counted by Lemma~\ref{lem:eqplace}. Dividing by the order $a'!\,b'!\,2^{a'}$ of the group acting on the auxiliary data gives
\[
\frac{a'!}{c_f!\,c_x!\,c_s!\,2^{c_s}}\cdot\frac{b'!}{b_f!\,b_s!\,2^{b_s}}\cdot\frac{2^{c_s}}{a'!\,b'!\,2^{a'}}=\frac{1}{c_f!\,c_x!\,c_s!\,b_f!\,b_s!\,2^{b_s}\,2^{a'}},
\]
which is the prefactor in \eqref{eq:symrec}. When the two slots of a closed reticulation form a cherry, the order of the slots coincides with the order of the two edges of the resulting parallel pair, exactly as in the proof of Theorem~\ref{thm:recursion}. The binomial $\binom{\ell}{j}$ distributes the leaf labels between the layer and the remainder.

The two-element edge orbits of $S$ are those of the layer, those of the remainder, and the $b_s$ orbits created by the pairs of points, so $w=w_L+w_R+b_s$. The parallel pairs of $S$ are the cherries of the layer and the unbroken parallel pairs of the remainder, so $P=p+q$.
\end{proof}

The generating-function form of Theorem~\ref{thm:closure} carries over to \eqref{eq:symrec}. The layer factors are derivatives of $T^{a_f}/a_f!$. The factor $\Pi_f$ is a polynomial in the number of leaves of the remainder, and $\Pi_s$ does not depend on it. Hence the exponential generating functions of $Q^\sigma_{a_f,a_s}(\ell,m;P;w)$ in $\ell$ are Laurent polynomials in $u$, and Corollary~\ref{cor:shape} applies to the defect.

Two hand evaluations illustrate the recursion. For $k=2$, $\ell=2$, the only contribution with $w\ge1$ comes from $c_x=2$, a single fork with two pairs, and a remainder of two fixed roots. The weight is $\frac1{2!\,2^2}\cdot2\cdot2=\frac12$, which is $D_2(2)$. For $k=3$, $\ell=1$, three types of involution contribute.
\begin{itemize}
\item The exchange of the twin tree nodes ($c_x=2$) contributes $\frac1{2!\,2^2}\cdot2\cdot1=\frac14$.
\item The exchange of the twin reticulations, alone or combined with that of the tree nodes, contributes through $c_s=1$. The layer has two invariant forests, two singleton forks or one fork with two pairs. The remainder is $Q^\sigma_{0,1}(1,1;0;1)=\frac12\cdot2\cdot1=1$. The contribution is $\frac1{2^2}\cdot2\cdot1=\frac12$.
\end{itemize}
The sum is $\frac34=D_3(1)$, the three nontrivial involutions of $\Gamma_1$ with merged exits each contributing $\frac14$.

\section{Exact counts with four and five reticulations}\label{sec:k45}

The results of this section are exact identities whose proof consists of an exact computation. The recursion of Theorem~\ref{thm:recursion} and the recursion \eqref{eq:symrec} are evaluated with the substitutions of Theorem~\ref{thm:closure}, in exact rational arithmetic in $\mathbb{Q}[u,u^{-1}]$. Every state then carries a Laurent polynomial in place of a sequence in $\ell$. By Corollary~\ref{cor:Dinv}, for $k\le5$ the sectors $w\ge1$ of $Q^\sigma_{1,0}(\cdot,k;0,0,0;w)$ add up to the generating function of $D_k$, and the sector $w=0$ is that of $W_k$. Corollary~\ref{cor:shape} converts each generating function into a closed form. For $k\le3$ this reproduces Theorem~\ref{thm:defects}. For each new result, what was computed and how it was checked are stated with the result.

The asymptotic results of Section~\ref{sec:asympt} provide one of these checks, through the most singular coefficients of the generating functions. For $k\le5$ let $G_k=\sum_ng_nu^n$ be the generating function of $|\cP_{\ell,k}|$ and $G_k^D=\sum_ng^D_nu^n$ that of $D_k$, and put $s=2k-1$. By \eqref{eq:uextract}, the term $g_{-(2s+1)}u^{-(2s+1)}$ contributes $g_{-(2s+1)}(2\ell+2s-1)!!/(2s-1)!!$. The term $g_{-2s}u^{-2s}$ contributes $g_{-2s}2^\ell\ell!\binom{\ell+s-1}{s-1}$, which is smaller by a factor of order $\ell^{-1/2}$. The term $g_{-(2s-1)}u^{-(2s-1)}$ is smaller by a factor of order $\ell^{-1}$. One has $(2\ell+2s-1)!!=(2\ell-1)!!(2\ell)^s\bigl(1+\frac{s^2}{2\ell}+O(\ell^{-2})\bigr)$, $(2\ell-1)!!=\sqrt2(2\ell/e)^\ell\bigl(1-\frac1{24\ell}+O(\ell^{-2})\bigr)$ and $2^\ell\ell!/(2\ell-1)!!=\sqrt{\pi\ell}\,(1+O(\ell^{-1}))$. With these expansions, the three-term expansion \eqref{eq:threeterm} and the defect law of Theorem~\ref{thm:symlaw} are equivalent, at the leading orders, to the four identities
\begin{equation}\label{eq:gtests}
\begin{gathered}
g_{-(2s+1)}=\frac{(4k-3)!!}{2^kk!},\qquad
\frac{g_{-2s}}{g_{-(2s+1)}}\cdot\frac{(2s-1)!!}{2^s(s-1)!}=-\frac k2,\\
\frac{s^2}2-\frac1{24}+\frac{(2s-1)\,g_{-(2s-1)}}{2\,g_{-(2s+1)}}=\frac{50k^2-62k+11}{24},\\
g^{D}_{-(2s-5)}\,(4k-3)(4k-5)(4k-7)=k(k-1)\,g_{-(2s+1)}.
\end{gathered}
\end{equation}
Since the coefficients $g_n$ are produced by the computation and not by the asymptotic analysis, these identities test the computation and the asymptotic theorems against each other.

\begin{corollary}[computer-assisted]\label{cor:D4}
For $\ell\ge2$,
\begin{align*}
D_4(\ell)={}&\frac{(2\ell-5)!!}{24}\bigl(48\ell^6+224\ell^5+348\ell^4-698\ell^3-699\ell^2-483\ell+666\bigr)\\
&-\frac{2^\ell\ell!}{4}\bigl(2\ell^3+11\ell^2+22\ell+13\bigr),
\end{align*}
and $D_4(1)=\frac{13}{4}$.
\end{corollary}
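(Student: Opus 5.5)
The plan is to obtain $D_4$ from the involution recursion of Theorem~\ref{thm:symrec}, evaluated in generating-function form, and then to check the result by several independent routes. By Corollary~\ref{cor:Dinv}, since $4\le5$, we have $D_4(\ell)=\sum_{w\ge1}Q^\sigma_{1,0}(\ell,4;0,0,0;w)$.

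The computation proceeds as follows.
\begin{enumerate}
\item Run \eqref{eq:symrec} downward from the top state $(a_f,a_s)=(1,0)$, $m=4$, $P=0$. Only finitely many states are reached, since $m'$ strictly decreases whenever $a'\ge1$.
\item For each state, replace the sequence in $\ell$ by its exponential generating function, following Theorem~\ref{thm:closure}. Forest numbers become derivatives of $T^{a_f}/a_f!$. The symmetric blocks contribute the constants $[u^q]G_R^{a_s}G_F^{\varphi}$, which do not depend on $\ell$. The factor $\Pi_f$ becomes a polynomial in $\theta$, and $\Pi_s$ is a constant.
\item Carry out the whole evaluation in exact rational arithmetic in $\mathbb{Q}[u,u^{-1}]$. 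This yields a Laurent polynomial $G_4^D$ in $u=\sqrt{1-2x}$.
\item Apply Corollary~\ref{cor:shape} with \eqref{eq:uextract}. The odd powers produce the $(2\ell-5)!!$ sector, after normalising by $\prod(2i-N)$ with the largest odd $N$. The negative even powers produce the $2^\ell\ell!$ sector. The nonnegative even powers produce the exceptional value at $\ell=1$, which accounts for the separate statement $D_4(1)=\frac{13}{4}$.
\end{enumerate}

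The result is then checked independently in four ways.
\begin{enumerate}
\item Exhaustive enumeration of $\cP_{\ell,4}$ for small $\ell$ (say $\ell\le4$), computing $\sum_N(1-|\Aut(N)|^{-1})$ directly. This needs an automorphism test, which is cheap because groups are elementary abelian $2$-groups by Lemma~\ref{lem:exponent2}.
\item The hand classification of symmetric networks with four reticulations in Appendix~\ref{app:k4}. This proceeds as in Proposition~\ref{prop:class}, by contracting gadgets to networks with $k\le2$ and weighting $\frac12$ or $\frac34$, and it gives an independent closed form.
\item Adding $W_4$ from Theorem~\ref{thm:recursion} must reproduce the published totals of \cite{YuZhang26}.
\item The last identity in \eqref{eq:gtests}, with $k=4$ and $s=7$, must hold. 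The leading coefficient $48/24=2$ in the $(2\ell-5)!!$ sector, times $\ell^6$, must match $k(k-1)/(8\ell^3)$ times the leading term of $|\cP_{\ell,4}|$ as given by Theorem~\ref{thm:symlaw}.
\end{enumerate}

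The main obstacle is the correctness of the involution recursion's bookkeeping. This concerns the inclusion–exclusion over cherries of types $f$, $x$, $s$ in $\Lambda^{\rm ex}$, and the orbit count $w=w_L+w_R+b_s$ used to separate $w=0$ from $w\ge1$. An error there would silently shift coefficients, as happened with the published $a_3$. For this reason the independent hand classification (Appendix~\ref{app:k4}) and brute-force enumeration are where I would invest the effort. I would first confirm that they match the recursion at $\ell=1,2,3$ (and that $\ell=1$ gives $\frac{13}{4}$) before trusting the closed form for all $\ell\ge2$.
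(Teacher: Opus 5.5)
Your proposal matches the paper's proof: $D_4$ is the $w\ge1$ part of $Q^\sigma_{1,0}(\cdot,4;0,0,0;w)$ by Lemma~\ref{lem:exponent2} and Corollary~\ref{cor:Dinv}. The recursion \eqref{eq:symrec} is evaluated exactly in $\mathbb{Q}[u,u^{-1}]$, where the largest odd power $u^3$ gives the $(2\ell-5)!!$ basis and the nonnegative even part affects only $\ell=1$; Corollary~\ref{cor:shape} then yields the closed form, which is cross-checked against $W_4$ plus the published totals, exhaustive enumeration, the classification of Appendix~\ref{app:k4} and the last identity of \eqref{eq:gtests}. Your leading-order check of $2\ell^6(2\ell-5)!!$ against $\frac{k(k-1)}{8\ell^3}$ times the leading term of $|\cP_{\ell,4}|$ is correct, and is equivalent to that identity, which at $k=4$ forces $g^D_{-9}=\frac{105}{32}$.
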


\begin{proof}
By Lemma~\ref{lem:exponent2} and Corollary~\ref{cor:Dinv}, the generating function of $D_4$ is the sum over $w\ge1$ of the generating functions of $Q^\sigma_{1,0}(\ell,4;0,0,0;w)$. The evaluation of \eqref{eq:symrec} with four reticulations gives a Laurent polynomial whose largest odd power is $u^3$ and whose only nonnegative even power contributes at $\ell=1$. Corollary~\ref{cor:shape} converts it into the stated form. For instance, at $\ell=2$ one has $(2\ell-5)!!=1$; the first term equals $7128/24=297$ and the second equals $\frac{117}{4}\cdot8=234$, so $D_4(2)=63$.
\end{proof}

Corollary~\ref{cor:D4} was checked independently in three ways. Added to $W_4$, it reproduces the totals of \cite{YuZhang26} for $\ell\le12$ and the exhaustive enumeration by canonical forms for $\ell\le3$; Table~\ref{tab:k4} shows the first values. The four-reticulation formula of \cite{YuZhang26} is thereby confirmed by a method that shares no step with the component-graph derivation. The defect coincides for $1\le\ell\le12$ with the one obtained from the hand classification of the symmetric networks in Appendix~\ref{app:k4}, which in addition gives the numbers of networks whose automorphism group has order two or four. Finally, the generating function satisfies \eqref{eq:gtests} with $k=4$.

\begin{table}[t]
\centering
\begin{tabular}{crrrc}
\toprule
$\ell$ & $W_4(\ell)$ by \eqref{eq:recursion} & $D_4(\ell)$ & $|\cP_{\ell,4}|$ \cite{YuZhang26} & enumeration\\
\midrule
1 & $423/4$ & $13/4$ & $109$ & $109$\\
2 & $3818$ & $63$ & $3881$ & $3881$\\
3 & $224775/2$ & $2073/2$ & $113424$ & $113424$\\
4 & $12775305/4$ & $73455/4$ & $3212190$ & --\\
5 & $368530695/4$ & $1414245/4$ & $92486235$ & --\\
\bottomrule
\end{tabular}
\caption{Four reticulations: the weighted count, the defect of Corollary~\ref{cor:D4}, the published totals and the exhaustive enumeration.}
\label{tab:k4}
\end{table}

For five reticulations the same computation, carried one reticulation further, gives Table~\ref{tab:k5} and the following closed forms.

\begin{table}[ht]
\centering
\footnotesize
\begin{tabular}{rrrr}
\toprule
$\ell$ & $W_5(\ell)$ & $D_5(\ell)$ & $|\cP_{\ell,5}|$\\
\midrule
1 & $14849/8$ & $319/8$ & 1896\\
2 & $85448$ & $1005$ & 86453\\
3 & $25110903/8$ & $189489/8$ & 3162549\\
4 & $436917765/4$ & $2256975/4$ & 109793685\\
5 & $30364216785/8$ & $112321935/8$ & 3809567340\\
6 & $134631424935$ & $368988165$ & 135000413100\\
7 & $39410646381855/8$ & $82233543105/8$ & 4936609990620\\
8 & $747705526842015/4$ & $1214707427325/4$ & 187230058567335\\
9 & $58988867326614225/8$ & $76036711049775/8$ & 7383113004708000\\
10 & $605380988854528425/2$ & $629250654168675/2$ & 303005119754348550\\
11 & $103464763933400844975/8$ & $87948173668734225/8$ & 12944089013383697400\\
12 & $2299897473814449934425/4$ & $1618553143499967675/4$ & 575379006739487475525\\
\bottomrule
\end{tabular}
\caption{Weighted count, defect and number of networks with five reticulations.}
\label{tab:k5}
\end{table}

\begin{theorem}[computer-assisted]\label{thm:k5}
For $\ell\ge3$,
\[
|\cP_{\ell,5}|=\frac{\mathcal A_5(\ell)}{1890}\,(2\ell-5)!!-\frac{\mathcal B_5(\ell)}{4608}\,2^\ell\ell!,
\]
with
\begin{align*}
\mathcal A_5(\ell)={}&1008\ell^{11}+37968\ell^{10}+597880\ell^9+5064160\ell^8+24476719\ell^7+63651595\ell^6\\
&+57274651\ell^5-107627135\ell^4-290117607\ell^3-158811138\ell^2+68349609\ell+45603810,\\
\mathcal B_5(\ell)={}&1536\ell^8+48960\ell^7+671891\ell^6+5143935\ell^5+23826389\ell^4+67730685\ell^3\\
&+114086072\ell^2+102589956\ell+36927648 .
\end{align*}
Moreover $|\cP_{1,5}|=1896$ and $|\cP_{2,5}|=86453$. The decomposition $|\cP_{\ell,5}|=W_5(\ell)+D_5(\ell)$ is given by
\begin{align*}
W_5(\ell)={}&\frac{(2\ell+3)!!}{7560}\bigl(252\ell^7+9492\ell^6+150100\ell^5+1289140\ell^4+6485153\ell^3\\
&+19078277\ell^2+30330879\ell+20056050\bigr)\\
&-\frac{2^\ell\ell!}{4608}\bigl(1536\ell^8+48960\ell^7+671891\ell^6+5141631\ell^5+23794949\ell^4\\
&+67558221\ell^3+113632232\ell^2+102025716\ell+36673920\bigr)
\end{align*}
for $\ell\ge1$, and
\begin{align*}
D_5(\ell)={}&\frac{(2\ell-5)!!}{24}\bigl(32\ell^8+464\ell^7+2648\ell^6+5904\ell^5+392\ell^4-17783\ell^3\\
&-14943\ell^2+1335\ell+6066\bigr)-\frac{2^\ell\ell!}{96}\bigl(48\ell^5+655\ell^4+3593\ell^3\\
&+9455\ell^2+11755\ell+5286\bigr)
\end{align*}
for $\ell\ge3$, with $D_5(1)=\frac{319}{8}$ and $D_5(2)=1005$.
\end{theorem}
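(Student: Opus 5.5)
The plan follows the proof of Corollary~\ref{cor:D4}, carried one reticulation further: an exact symbolic computation in $\mathbb{Q}[u,u^{-1}]$, followed by a conversion to closed form and a set of independent checks. First, evaluate the recursion of Theorem~\ref{thm:recursion} with the substitutions of Theorem~\ref{thm:closure}. Proceed state by state, over $Q_a(\cdot,m;p)$ with $m\le5$, $a\le m+1$ and the admissible numbers $p$ of parallel pairs, in increasing order of $m$. Each state carries a Laurent polynomial in $u$, and by \eqref{eq:WQ} the state $Q_1(\cdot,5;0)$ gives the generating function of $W_5$.

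Second, evaluate the involution recursion \eqref{eq:symrec} in the same ring. The states are now $Q^\sigma_{a_f,a_s}(\cdot,m;P_f,P_x,P_s;w)$. The invariant layer forests of Lemma~\ref{lem:invforest} are handled with the cherry inclusion--exclusion, and the placement factors with Lemma~\ref{lem:eqplace}. Here $\Pi_f$ becomes an operator polynomial in $\theta$, while $\Pi_s$ is a constant. Then sum the sectors $w\ge1$ of $Q^\sigma_{1,0}(\cdot,5;0,0,0;w)$. By Lemma~\ref{lem:exponent2}, which applies since $k=5\le5$, together with Corollary~\ref{cor:Dinv} and \eqref{eq:QsigmaTop}, this sum is the generating function of $D_5$. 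Both Laurent polynomials are then converted into closed forms by Corollary~\ref{cor:shape}.

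For $W_5$, the largest odd power should be $u^{-9}$. Together with $(2\ell-N-2)!!=(2\ell+7)!!$, and after factoring the common part $(2\ell+3)!!$, this gives the stated form, valid for all $\ell\ge1$, since there is no exceptional polynomial part beyond $\ell_0$. For $D_5$, the largest odd power is $u^{1}$, which gives $(2\ell-5)!!$ after rescaling, and the nonnegative even powers produce the exceptional values at $\ell=1,2$. Adding the two forms over the common factor $(2\ell-5)!!$ gives $\mathcal A_5$ and $\mathcal B_5$. As a consistency check, the difference between the $2^\ell\ell!$ polynomial of $W_5$ and that of $\mathcal B_5/4608$ should equal $\frac{2^\ell\ell!}{96}\cdot48\,(\cdots)$. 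The first values of Table~\ref{tab:k5} come from \eqref{eq:uextract}.

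The main obstacle is not conceptual; it is the correctness of a large exact computation, above all of the involution recursion. The bookkeeping there of $c_x$ versus $c_s$ orbits, of the three cherry types and of the orbit count $w$ is the error-prone part. The computation is therefore validated in several independent ways:
\begin{itemize}
\item rerun it for $k\le4$, where it must reproduce Theorem~\ref{thm:defects}, Corollary~\ref{cor:D4} and the published totals;
\item enumerate $\cP_{\ell,5}$ exhaustively by canonical forms for $\ell\le2$ or $3$, computing $|\Aut(N)|$ directly;
\item verify the four identities \eqref{eq:gtests} with $k=5$ and $s=9$;
\item require that the totals be integers for all computed $\ell$.
\end{itemize}
The identities \eqref{eq:gtests} are the decisive check, because they tie the most singular coefficients to Theorem~\ref{thm:gamma}, Corollary~\ref{cor:threeterm} and Theorem~\ref{thm:symlaw}, which are proved without the recursion.
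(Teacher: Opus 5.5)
Your plan is the paper's own proof. You evaluate the recursions of Theorems~\ref{thm:recursion} and~\ref{thm:symrec} exactly in $\mathbb{Q}[u,u^{-1}]$ with five reticulations and identify the sectors $w\ge1$ of $Q^\sigma_{1,0}$ with $D_5$ via Lemma~\ref{lem:exponent2} and Corollary~\ref{cor:Dinv}. You then convert with Corollary~\ref{cor:shape} and validate by enumeration, integrality and \eqref{eq:gtests}. The one step you spell out concretely, the conversion, is wrong as written. You misidentify the largest odd powers, and with your values Corollary~\ref{cor:shape} cannot return the formulas of the theorem.

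\textbf{$W_5$.} The largest odd power in its generating function is $u^{-5}$, not $u^{-9}$. Corollary~\ref{cor:shape} then gives $(2\ell+3)!!$ directly, with no factoring. If it were $u^{-9}$, the degree-$7$ polynomial multiplying $(2\ell+3)!!$ would have to be divisible by $(2\ell+5)(2\ell+7)$. The stated polynomial instead takes the value $-945/32$ at $\ell=-5/2$. By \eqref{eq:uextract} this means the coefficient of $u^{-5}$ is $-3/256\neq0$.

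\textbf{$D_5$.} The largest odd power is $u^{3}$, not $u^{1}$. With $N=3$ one gets $(2\ell-N-2)!!=(2\ell-5)!!$ with no rescaling. With $N=1$ one would get $(2\ell-3)!!$, and the stated degree-$8$ polynomial would have to vanish at $\ell=3/2$. It equals $315/2$ there, so $g^D_3=35/16$. This same $u^3$ is the largest odd power of the total, which is why $\mathcal A_5$ multiplies $(2\ell-5)!!$. The range $\ell\ge3$ comes from the even powers $u^2,u^4$, which give $\ell_0=2$.

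The remedy is to read $N$ off the computed Laurent polynomials rather than predict it; the method itself is unaffected. You may also want the paper's first check: the $w=0$ sector of \eqref{eq:symrec} coincides, as a Laurent polynomial, with the output of Theorem~\ref{thm:recursion}. That tests directly the implementation of the involution recursion, which you rightly single out as the error-prone part.
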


\begin{proof}
The recursions of Theorems~\ref{thm:recursion} and~\ref{thm:symrec} were evaluated with five reticulations as described at the beginning of this section. By Lemma~\ref{lem:exponent2} and Corollary~\ref{cor:Dinv}, the sectors $w\ge1$ give the generating function of $D_5$ and the sector $w=0$ that of $W_5$. Corollary~\ref{cor:shape} converts the generating functions into the stated forms. The largest odd power is $u^{3}$ for $|\cP_{\ell,5}|$ and $D_5$, and $u^{-5}$ for $W_5$; the even powers $u^{2}$ and $u^{4}$ contribute only for $\ell\le2$.
\end{proof}

The leading behaviour of Theorem~\ref{thm:k5} can be followed by hand from the computed coefficients. The three most singular coefficients of $G_5$ are
\[
g_{-19}=\frac{2297295}{256},\qquad g_{-18}=-13440,\qquad g_{-17}=-\frac{225225}{256},
\]
and the most singular coefficient of $G_5^D$ is $g^D_{-13}=\frac{3465}{64}$. By \eqref{eq:uextract}, the term $g_{-19}u^{-19}$ contributes $g_{-19}(2\ell+17)!!/17!!$. Since $(2\ell+17)!!/(2\ell-5)!!$ is a product of eleven factors $2\ell+2i-1$, its leading term is $2^{11}\ell^{11}$. With $17!!=34459425$, the leading coefficient in the basis $(2\ell-5)!!$ is therefore $g_{-19}\,2^{11}/17!!=\frac{8}{15}=\frac{1008}{1890}$, the leading coefficient of $\mathcal A_5/1890$. The term $g_{-18}u^{-18}$ contributes $g_{-18}\,2^\ell\ell!\binom{\ell+8}{8}$, whose leading coefficient $-13440/8!=-\frac13=-\frac{1536}{4608}$ is that of $-\mathcal B_5/4608$.

Theorem~\ref{thm:k5} was checked independently in four ways. First, the sector $w=0$ of the symmetric recursion coincides, as a Laurent polynomial, with the generating function of $W_5$ obtained from Theorem~\ref{thm:recursion} alone. Second, exhaustive enumeration of $\cP_{1,5}$ and $\cP_{2,5}$ by canonical forms gives $1896$ and $86453$ networks, with defects $\frac{319}{8}$ and $1005$. It gives the same values for the involution sum of \eqref{eq:Dinv}, as Lemma~\ref{lem:exponent2} requires. Third, $W_5(\ell)+D_5(\ell)$ is an integer for every $\ell\le12$, although neither term is.

Fourth, the coefficients above satisfy the identities \eqref{eq:gtests} with $k=5$ and $s=9$. The first identity reads $17!!/(2^5\,5!)=34459425/3840=2297295/256$. The second reads $-13440\cdot\frac{256}{2297295}\cdot\frac{17!!}{2^9\,8!}=-\frac52$. The third reads
\[
\frac{81}{2}-\frac{1}{24}-\frac{17}{2}\cdot\frac{225225}{2297295}=\frac{81}{2}-\frac{1}{24}-\frac56=\frac{317}{8},
\]
which is the coefficient $\frac{50k^2-62k+11}{24}$ of Corollary~\ref{cor:threeterm} at $k=5$. The fourth reads $\frac{3465}{64}\cdot17\cdot15\cdot13=20\cdot\frac{2297295}{256}$, which is the constant of Theorem~\ref{thm:symlaw} at $k=5$. Both asymptotic constants are thereby confirmed independently of their proofs.

The $2^\ell\ell!$ sectors in Theorem~\ref{thm:k5} satisfy $\mathcal B_5/4608=B_{W_5}/4608+B_{D_5}/96$ coefficient by coefficient, where $B_{W_5}$ and $B_{D_5}$ are the polynomials multiplying $2^\ell\ell!$ in $W_5$ and $D_5$. The weighted count $W_5$ has no small-$\ell$ corrections, and the corrections at $\ell=1,2$ originate entirely in the defect.

Figure~\ref{fig:ratios} compares the exact counts with the leading-order laws of Section~\ref{sec:asympt} for $k\le5$ and $10\le\ell\le400$. The counts are computed from Lemma~\ref{lem:exactTC}, Theorem~\ref{thm:defects}, Corollary~\ref{cor:D4}, Theorem~\ref{thm:k5}, and the closed forms of $|\cP_{\ell,k}|$ for $k\le4$ given by the same evaluation. Panel (a) shows the non-tree-child excess normalised by Theorem~\ref{thm:gamma}, and panel (b) the defect density normalised by Theorem~\ref{thm:symlaw}. Both ratios tend to one. At $\ell=400$ they still differ from one by $1.7$ to $6.1$ per cent in panel (a) and by $4.7$ to $8.6$ per cent in panel (b). Deviations of this size are consistent with relative corrections of order $\ell^{-1/2}$ with coefficients increasing with $k$.

\begin{figure}[t]
\centering
\includegraphics[width=\textwidth]{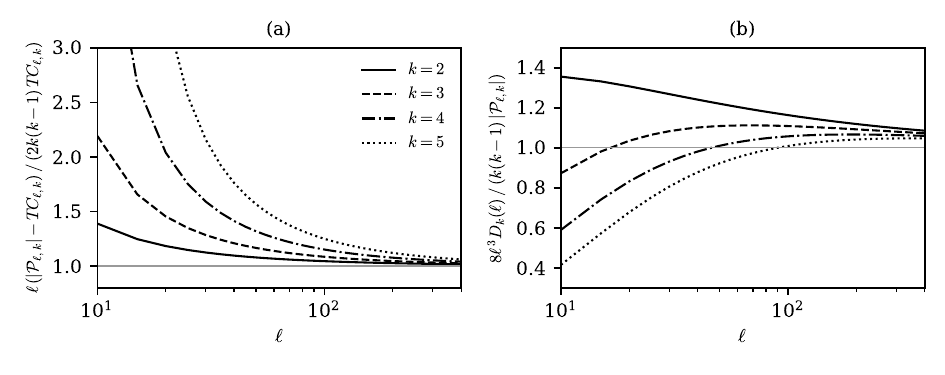}
\caption{(a) The ratio $\ell\,(|\cP_{\ell,k}|-TC_{\ell,k})/(2k(k-1)\,TC_{\ell,k})$ and (b) the ratio $8\ell^3D_k(\ell)/(k(k-1)\,|\cP_{\ell,k}|)$ for $k=2,3,4,5$, computed from the exact closed forms. By Theorems~\ref{thm:gamma} and~\ref{thm:symlaw} both tend to one as $\ell\to\infty$.}
\label{fig:ratios}
\end{figure}

\section{Six and more reticulations}\label{sec:beyond}

The restriction $k\le5$ in Sections~\ref{sec:symrec} and~\ref{sec:k45} enters only through Corollary~\ref{cor:Dinv}, which rests on Lemma~\ref{lem:exponent2}. For every $k$ one has $\sum_{\sigma\in\Aut(N)}|\Aut(N)|^{-1}=1$, and therefore
\begin{equation}\label{eq:burnside}
|\cP_{\ell,k}|=\sum_{N\in\cP_{\ell,k}}\ \sum_{\sigma\in\Aut(N)}\frac{1}{|\Aut(N)|}.
\end{equation}
The count of networks is thus the weighted count of pairs formed by a network and an arbitrary automorphism. For $k\le5$ the automorphisms are the identity and involutions, and \eqref{eq:burnside} reduces to \eqref{eq:QsigmaTop}. For $k\ge6$, Example~\ref{ex:dihedral} shows that elements of order four occur, and by Lemma~\ref{lem:autbound} every automorphism has order a power of two. An exact recursion valid for every $k$ therefore requires an extension of Theorem~\ref{thm:symrec} to automorphisms of order $2^r$ for every $r$. The present section describes what this extension involves; it is not carried out here.

The first change concerns the orbit data. An automorphism of order $2^r$ permutes the roots, the closed reticulations and the open reticulations in cycles whose lengths are powers of two. The pair $(a_f,a_s)$ is replaced by the cycle type of the action on the roots. The types $(c_f,c_x,c_s)$ and $(b_f,b_s)$ are replaced by the cycle types on the closed and open reticulations. For every cycle of closed reticulations of length $2^i$, the data must also record whether $\sigma^{2^i}$ fixes or exchanges the two slots of each of its reticulations. The prefactor of \eqref{eq:symrec} is replaced by the inverse order of the centraliser of a permutation with the given cycle type, namely $\prod_i n_i!\,2^{in_i}$ for $n_i$ cycles of length $2^i$, combined with the slot data.

The second change concerns the invariant layer forests. Lemma~\ref{lem:invforest} uses that a fixed vertex has either two fixed or two exchanged children. Under an automorphism of order $2^r$, a vertex has an orbit of length $2^i$, and its stabiliser, generated by $\sigma^{2^i}$, either fixes its children or exchanges them. The forks of Lemma~\ref{lem:invforest} are thereby replaced by a hierarchy of forks indexed by the level at which the children are exchanged. The block generating functions $G_R$ and $G_F$ are replaced by a finite system indexed by that level, obtained by iterating the construction of Lemma~\ref{lem:invforest}. Since every leaf is fixed, every moved subtree carries slots only, so these generating functions remain polynomials for fixed $k$.

The third change concerns placements and parallel pairs. Points are placed on edge orbits of every length $2^i$, and the number $w$ is replaced by the numbers of edge orbits of each length. Parallel pairs are classified by the length of their orbit and by whether the stabiliser exchanges their two edges. Lemma~\ref{lem:eqplace} then extends by inclusion--exclusion over these classes.

The smallest number of reticulations that carries an automorphism of order two is two, attained by $\Gamma_1$. For order four it is six, by Lemma~\ref{lem:exponent2} and Example~\ref{ex:dihedral}. The corresponding number for order eight is not determined here; below it, the extension concerns automorphisms of order at most four. The number of cycle types grows with $k$, but the recursion remains polynomial in $\ell$ for fixed $k$. Independent checks of it would be weaker than for $k\le5$, since exhaustive enumeration with six reticulations is practical only for one leaf by the method used here. Integrality of the counts and the identities \eqref{eq:gtests}, which hold for every $k$, would remain available as checks.

The extension would provide an exact finite recursion for every number of reticulations, and hence, by Corollary~\ref{cor:shape}, a closed form for each fixed $k$, each obtained by a separate finite computation. It would not provide a formula uniform in $k$. The asymptotic results of Section~\ref{sec:asympt} hold for every $k$ and do not depend on it.

\section{Conclusions}\label{sec:conclusions}

The number of unrestricted binary phylogenetic networks with a fixed number of reticulations has been separated into a weighted count, in which every network contributes the inverse order of its group of leaf-fixing automorphisms, and a defect supported on symmetric networks. The weighted count is the leaf-labelled counterpart of the vertex-labelled count. It satisfies a recursion over the source layers of the tree-component structure for every number of reticulations. The states of the recursion are the numbers of roots, reticulations and unbroken parallel pairs. Neither a list of component graphs nor a distinction between symmetric and asymmetric configurations enters. In the variable $u=\sqrt{1-2x}$ the recursion becomes an identity between Laurent polynomials, which determines the two-sector form of every count and its finitely many exceptional values.

Automorphisms of networks originate in two local seeds, a pair of tree nodes over two common reticulations and a pair of reticulations over a common reticulation. The automorphism groups are $2$-groups. They are elementary abelian for at most five reticulations, whereas a dihedral group of order eight occurs with six. For at most five reticulations the defect is therefore the weighted count of networks with a distinguished involution. The layer-peeling recursion extends to such pairs, because the parts of a network moved by an automorphism contain no leaves. The resulting closed forms, obtained by an exact symbolic evaluation and checked against exhaustive enumeration, integrality and the asymptotic constants, reproduce the published counts with up to four reticulations by an independent method. They give the count with five reticulations, together with its splitting into weighted count and defect, in which all exceptional small values originate in the defect. The comparison with the published three-reticulation generating function of Mansouri has identified a coefficient error in its numerator, which is corrected.

For every number of reticulations, uniformly over explicit ranges, the asymptotic analysis has determined three corrections to the first-order behaviour. Networks with a single violation of the tree-child condition dominate the non-tree-child networks. They comprise networks with a stacked pair of reticulations and networks with a tree node over two reticulations, in equal numbers to leading order. Together they amount to a fraction $2k(k-1)/\ell$ of the tree-child networks, which yields the third term of the asymptotic expansion. Only the second kind is reticulation-visible, so reticulation-visible networks exceed tree-child networks by the fraction $k(k-1)/\ell$; this law coincides with $k!/\ell$ only for $k=2,3$. A balance relation between seams, moved reticulations, splitters and moved vertices shows that the crossed cherry is the only symmetric configuration of leading order, so a random network is symmetric with probability $k(k-1)/(4\ell^3)$. All three laws are governed by the number of pairs of reticulations. The exact counts with up to five reticulations confirm the predicted constants independently of the proofs.

An exact recursion for every number of reticulations requires the extension of the symmetric recursion to automorphisms of order a power of two. Its ingredients have been identified: cycle types in place of involution types, a hierarchy of forks in the invariant layer forests, and placements on edge orbits of every length. The asymptotic results do not depend on this extension.

\appendix
\section{Symmetric networks with four reticulations}\label{app:k4}

This appendix classifies the symmetric networks with four reticulations by hand. The classification is independent of Section~\ref{sec:symrec}. Besides the defect $D_4$, it yields the numbers of networks whose automorphism group has order two or four. The seed argument of Proposition~\ref{prop:class}, carried one reticulation further, yields six configurations in addition to $\Gamma_1$, $\Gamma_2$ and $\Gamma_3$. Each is listed with the shape it contracts to.

The stacked cross $\Gamma_4$ consists of tree nodes $a,b,v,w$ and reticulations $y,y',r_1,r_2$, with edges $a\to y$, $a\to y'$, $b\to y$, $b\to y'$, $y\to v$, $y'\to w$ and $u\to r_i$ for $u\in\{v,w\}$. It contracts to a reticulation followed by a tree node.

The doubly propagated cross $\Gamma_5$ is $\Gamma_3$ with one further level of propagation and one further seam. It contracts to a chain of three tree nodes.

The branched cross $\Gamma_6$ consists of a tree node $p$ with children $q,q'$, where $q\to v,y$ and $q'\to w,y'$, the pair $v,w$ is a twin pair over seams $r_1,r_2$, and the pair $y,y'$ is a twin pair over seams $s_1,s_2$. It contracts to a tree node with two tree-node children.

The remaining three configurations arise from a seed of type (R), whose reticulations $v,w$ have the common reticulation child $c$, and they differ in how the parents of $v$ are placed.

\begin{itemize}
\item In the mixed twin reticulations $\Gamma_7$ one parent of $v$ is fixed. The configuration consists of tree nodes $a,p,q,q'$ and reticulations $v,w,c,z$, with edges $a\to v$, $a\to w$, $p\to q$, $p\to q'$, $q\to v$, $q\to z$, $q'\to w$, $q'\to z$, $v\to c$ and $w\to c$. It contracts to a tree node $u$ together with a reticulation $\rho$ of which $u$ is a parent. When the two in-edges of $\rho$ both leave $u$, the exit out of $z$ feeds the entry of $a$.
\item In the crossed twin reticulations $\Gamma_8$ both parents of $v$ are moved, and the second child of one of them is the image of the other. The configuration consists of tree nodes $p,s,s',a,a'$ and reticulations $z,v,w,c$, with edges $p\to s$, $p\to s'$, $s\to a$, $s\to w$, $s'\to a'$, $s'\to v$, $a\to z$, $a\to v$, $a'\to z$, $a'\to w$, $v\to c$ and $w\to c$.
\item In the triangular twin reticulations $\Gamma_9$ both parents of $v$ are moved and one is the parent of the other. The configuration consists of tree nodes $p,q,q',b,b'$ and reticulations $z,v,w,c$, with edges $p\to q$, $p\to q'$, $q\to b$, $q\to v$, $b\to v$, $b\to z$, $q'\to b'$, $q'\to w$, $b'\to w$, $b'\to z$, $v\to c$ and $w\to c$.
\end{itemize}

Both $\Gamma_8$ and $\Gamma_9$ have one entry and the two distinguishable exits $z$ and $c$, so each contracts to a tree node of a tree in two orientations. They arise in case (R) when both parents of $v$ are moved but a moved parent does not force a further seam, namely when $b\to\sigma(a)$ or $a\to b$ for the two parents $a,b$ of $v$. These two possibilities are easily overlooked in the seed argument, and the exhaustive enumeration confirms that they occur.

We write $\Delta$ for the configuration $\Gamma_2$ whose two entries leave a common tree node, which is $\Gamma_1$ with merged exits. We use the structures $\cP^*_{\ell,1}$ of Definition~\ref{def:Pstar}, in which a parallel pair may occur at the contraction site. Let $P_1=|\cP_{\ell,1}|$ and $P_2=|\cP_{\ell,2}|$. Let $A=(\ell-1)t_\ell$ be the number of symmetric networks in $\cP_{\ell,2}$, and $O=|\mathcal{OCP}_{\ell,1}|=(2\ell-2)!/(2^{\ell-1}(\ell-2)!)$. Let $R=P_1-\ell O$ be the number of networks of $\cP_{\ell,1}$ whose reticulation has a tree-node child. Counting by contraction as in Theorem~\ref{thm:defects} gives the thirteen classes of Table~\ref{tab:k4classes}.

Consistency checks are built into the list. The networks obtained by expanding the twin node $p'$ of a symmetric network of $\cP_{\ell,2}$ are exactly class E3. Those obtained by expanding its seams are exactly class E4. Those obtained by expanding a tree node outside its $\Gamma_1$ are exactly class E2, each counted twice.

\begin{table}[t]
\centering
\small
\begin{tabular}{lp{3.6cm}p{4.0cm}p{4.6cm}c}
\toprule
Class & Configuration & Contraction & Number & $|\Aut|$\\
\midrule
E1 & $\Delta$ and one reticulation & edge of $\cP^*_{\ell,1}$ & $(2\ell+2)P_1+(2\ell-1)t_\ell$ & 4\\
E2 & two disjoint $\Gamma_1$ & two tree nodes of a tree & $\binom{\ell-1}{2}t_\ell$ & 4\\
E3 & $\Gamma_4$, common entry node & tree node of a tree & $(\ell-1)t_\ell$ & 4\\
E4 & $\Gamma_3$, $r_1,r_2$ merged & tree node of a tree, two orientations & $2(\ell-1)t_\ell$ & 4\\
E5 & $\Gamma_1$ and two reticulations & tree node of $\cP_{\ell,2}$ & $(\ell+1)(P_2-A)+A$ & 2\\
E6 & $\Gamma_2$ and one reticulation & reticulation of $\cP_{\ell,2}$ & $2(P_2-A)$ & 2\\
E7 & $\Gamma_3$ and one reticulation & adjacent tree nodes of $\cP_{\ell,1}$ & $(\ell-1)P_1-R$ & 2\\
E8 & $\Gamma_4$, distinct entry nodes & reticulation and tree child in $\cP_{\ell,1}$ & $R$ & 2\\
E9 & $\Gamma_5$ & chain of three tree nodes of a tree & $(\ell-4)t_\ell+\ell t_{\ell-1}$ & 2\\
E10 & $\Gamma_6$ & tree node with two tree-node children & $\binom{\ell}{2}t_{\ell-1}-t_\ell$ & 2\\
E11 & $\Gamma_7$ & pair $(u,\rho)$ in $\cP^*_{\ell,1}$ & $2P_1+(2\ell-1)t_\ell$ & 2\\
E12 & $\Gamma_8$ & tree node of a tree, two orientations & $2(\ell-1)t_\ell$ & 2\\
E13 & $\Gamma_9$ & tree node of a tree, two orientations & $2(\ell-1)t_\ell$ & 2\\
\bottomrule
\end{tabular}
\caption{Symmetric classes with four reticulations. The counts of E9 and E10 hold for $\ell\ge2$; both vanish for $\ell=1$.}
\label{tab:k4classes}
\end{table}

\begin{theorem}\label{thm:D4}
For $\ell\ge2$ the numbers of networks in $\cP_{\ell,4}$ whose automorphism group has order four and order two are
\begin{align}
N_4(\ell)&=(2\ell+2)P_1+\tfrac12(\ell^2+7\ell-6)\,t_\ell,\label{eq:N4}\\
N_2(\ell)&=(\ell+3)P_2+(\ell+1)P_1-(\ell-2)(\ell-4)\,t_\ell+\tfrac12\ell(\ell+1)\,t_{\ell-1},\label{eq:N2}
\end{align}
no network has a larger group, and
\begin{equation}\label{eq:D4}
D_4(\ell)=\tfrac34N_4(\ell)+\tfrac12N_2(\ell).
\end{equation}
For $\ell=1$ one has $N_4=1$, $N_2=5$ and $D_4(1)=\tfrac{13}{4}$.
\end{theorem}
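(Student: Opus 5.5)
The proof will follow the pattern of Proposition~\ref{prop:class} and Theorem~\ref{thm:defects}, carried one reticulation further. First, by Lemma~\ref{lem:exponent2} every $N\in\cP_{\ell,4}$ has $\Aut(N)\cong\mathbb{Z}_2^m$. Then $D_4(\ell)=\sum_N(1-2^{-m})$, so \eqref{eq:D4} follows once two things are shown: $m\le2$, and the networks are partitioned by the order of their group into $N_2$ and $N_4$.

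\emph{Classification of nontrivial involutions.} Fix a nontrivial involution $\sigma$ and its seed $v,w$ from Lemma~\ref{lem:seed}. Rerun the reticulation-forcing argument of Proposition~\ref{prop:class} with a budget of four reticulations instead of three.

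In case (T), the seams $r_1,r_2$ use two reticulations, and the parents $p_v,p_w$ fall into four possibilities:
\begin{itemize}
\item they coincide, giving $\Gamma_1$;
\item they are moved reticulations, giving $\Gamma_4$;
\item they are moved tree nodes whose other children merge in a seam, giving $\Gamma_3$;
\item propagation continues one more level, giving $\Gamma_5$ or the branched $\Gamma_6$.
\end{itemize}

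In case (R), $v,w,c$ use three reticulations, and one remains. The parents $a,b$ of $v$ are then either
\begin{itemize}
\item both children-equal tree nodes, giving $\Gamma_2$;
\item one fixed, giving $\Gamma_7$;
\item both moved, in which case the single remaining reticulation allows only the two patterns $b\to\sigma(a)$ or $a\to b$, giving $\Gamma_8$ and $\Gamma_9$.
\end{itemize}

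In each branch, any further moved vertex must be shown to force a fifth reticulation, through Lemma~\ref{lem:seams} and the last-moved-vertex argument.

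\emph{Combining involutions.} Next, determine which configurations combine within one network, and compute the group in each case:
\begin{itemize}
\item two disjoint $\Gamma_1$'s (E2);
\item $\Delta$ plus a free reticulation (E1);
\item $\Gamma_4$ whose entries share a node, which creates an extra $\Gamma_1$ (E3);
\item $\Gamma_3$ with merged seams, which hides a $\Delta$ (E4).
\end{itemize}
These four classes give $\mathbb{Z}_2^2$. All others have a single nontrivial involution. A group of order eight would need three independent involutions, and each costs at least two reticulations with at most limited sharing; the counting of Lemma~\ref{lem:blockid} should exclude this at $k=4$.

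\emph{Counting by contraction.} Each class is counted by contraction to a marked object with fewer reticulations, as in Theorem~\ref{thm:defects}. Contracting $\Gamma_1$ turns classes E5 and E6 into a marked tree node or reticulation of $\cP_{\ell,2}$. Symmetric images (the $A$ terms) must be corrected, because their contraction either double counts or creates a second symmetry. Where a contraction can create a parallel pair (E1 and E11), count through $\cP^*_{\ell,1}$ and obtain $|\cP^*_{\ell,1}|=P_1+$ (parallel-pair networks). This is what produces the $(2\ell-1)t_\ell$ terms. The resulting expressions are the entries of Table~\ref{tab:k4classes}. Summing the $|\Aut|=4$ rows gives \eqref{eq:N4}, and summing the rest gives \eqref{eq:N2}; both are routine algebra with $\binom{\ell-1}2+3(\ell-1)+(2\ell-1)$ and similar expressions. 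For $\ell=1$, the small cases are handled by direct inspection.

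\emph{Main obstacle.} I expect the hardest step to be completeness and disjointness of the case analysis. This means showing that no configuration with four reticulations is missed (the easily overlooked $\Gamma_8$ and $\Gamma_9$ illustrate the risk), and that networks admitting two involutions are placed in exactly one class. I would organise it by listing all pairs (seed type, fate of each parent) and checking each against the four-reticulation budget. As a safeguard, I would cross-check the final $D_4$ against Corollary~\ref{cor:D4} and the enumeration for $\ell\le3$.
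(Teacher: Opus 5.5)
Your plan is essentially the paper's proof: the seed argument of Proposition~\ref{prop:class} run with a budget of four reticulations, including both placements of moved parents in case (R), gives the thirteen classes of Table~\ref{tab:k4classes}, whose contraction counts sum to \eqref{eq:N4} and \eqref{eq:N2}, and the bound $|\Aut(N)|\le4$ then follows directly from the completed classification, so the appeal to Lemma~\ref{lem:blockid} is unnecessary. One small correction: in class E4 the merged exits of $r_1,r_2$ create a copy of $\Gamma_2$ whose entries leave the distinct nodes $q,q'$, not a copy of $\Delta$, so the second involution is $(r_1\,r_2)$ and the group is still $\mathbb{Z}_2\times\mathbb{Z}_2$.
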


\begin{proof}
Equation \eqref{eq:N4} is the sum of the counts of classes E1--E4. Equation \eqref{eq:N2} is the sum of E5--E13 after simplification. The relation $E_7+E_8=(\ell-1)P_1$ and the identity $E_9+E_{10}=(\ell-5)t_\ell+\tfrac12\ell(\ell+1)t_{\ell-1}$, valid for $\ell\ge2$, are used. Completeness of the classes follows from the seed argument, extended to both placements of moved parents in case (R).
\end{proof}

The defect \eqref{eq:D4} coincides with Corollary~\ref{cor:D4} for $1\le\ell\le12$. The exhaustive enumeration for $\ell\le3$ reproduces the group-order counts \eqref{eq:N4}--\eqref{eq:N2}: $(N_4,N_2)=(1,5)$, $(18,99)$ and $(204,1767)$ for $\ell=1,2,3$.

\section*{Acknowledgements}
J.B. appreciates fruitful discussions with Maria Vallespir-Socias, Joana Rossell\'o, Margalida Batle, Maria del Mar Batle, and Regina Batle. J.B. received no funding for the present work.

\section*{Data availability}
The data and programs supporting this study are available from the author on reasonable request.

\end{document}